\definecolor{darkpastelgreen}{rgb}{0.01, 0.75, 0.24}
\definecolor{blue-violet}{rgb}{0.54, 0.17, 0.89}
\newcommand{\mmag}[1]{{\color{magenta}{#1}}}
\newcommand{\bv}[1]{{\color{blue-violet}{#1}}}
\newcommand{\ka}{\mathrm{KA}}
\newcommand{\gka}{\Gamma\text{-KA}}
\newcommand{\ms}{\mathrm{MS}_}
\newcommand{\gms}{\mathrm{MS}_{\Gamma}}
\newtheorem{theorem}{Theorem}[section]
\newtheorem{claim}[theorem]{Claim}
\newtheorem{lemma}[theorem]{Lemma}
\newtheorem*{lemma*}{Lemma}
\newtheorem{oprb}[theorem]{Open Problem}
\newtheorem{obs}[theorem]{Observation}
\theoremstyle{definition}
\newtheorem{definition}[theorem]{Definition}
\theoremstyle{definition}
\newtheorem*{prb*}{Open Problem}
\newtheorem{exm}[theorem]{Example}
\def\gr{\mathcal{G}}
\begin{document}


\title
{\Large \sc \bf {Magic  squares on Abelian groups}}
\date{}
\author{{{Sylwia Cichacz$^{1}$, Dalibor Froncek$^{2}$}}\\
\normalsize $^1$AGH University of Krakow, Poland, cichacz@agh.edu.pl\\
\normalsize $^2$University of Minnesota Duluth, U.S.A.,  dalibor@d.umn.edu
}

\maketitle

\begin{abstract}
	Let $(\Gamma,+)$ be an Abelian group of order $n^2$ and $\gms(n)$ be an $n\times n$ array whose entries are all elements of $\Gamma$. Then $\gms(n)$ is a $\Gamma$-magic square if all row, column, main and backward main diagonal sums are equal to the same element $\mu\in\Gamma$.
We prove that for every Abelian group $\Gamma$ of order $n^2$, $n>2$, there exists a magic square $\gms(n)$ where the square entries are elements of $\Gamma$.
\end{abstract}

\noindent
\textbf{Keywords:}  Magic squares, magic rectangles, Latin squares, Abelian group, Kotzig array

\noindent
\textbf{2000 Mathematics Subject Classification:} 05B15

\section{Introduction}\label{sec:intro}

 A \emph{magic square} of order $n$ denoted by MS$(n)$ is an $n\times n$ array with entries $1,2,\dots, n^2$ such that the sum of each row, column, and the main forward and backward diagonal is the same \emph{magic constant} $c=n(n^2+1)/2$. When we only require the row and column sums to be equal and disregard the diagonals, we speak about a \emph{semi-magic square.}  Magic squares are one of the oldest mathematical structures. The earliest known magic square is a $3\times3$ magic square called \textit{Lo Shu magic square} and can be traced in Chinese literature as far back as 2800 B.C. Since then, certainly, many people have studied magic squares. For a  survey of magic squares, see Chapter 34 in \cite{handbook}.

A broader concept is an $m\times n$ \emph{magic rectangle} with entries $1,2,\dots mn$ where all row sums are equal to the \emph{row constant} $r=n(mn+1)/2$ all column sums are equal to the \emph{column constant} $c=m(mn+1)/2$. A semi-magic square is then an $n\times n$ magic rectangle.

Magic squares and rectangles can be generalized in may different ways. For instance, we may require that the entries are all primes or integers with some other special properties, or elements of an Abelian group $\Gamma$ or order $n^2$. We call the latter square a \emph{$\Gamma$-magic square} and denote it by $\gms(n)$. A formal definition is given in Section~\ref{sec:defs and related}.

To avoid confusion with the order of the group forming a magic square, we will from now on refer to the order of MS$_{\Gamma}(n)$  as the \emph{side}. So, MS$_{\Gamma}(n)$  will be called a \emph{magic square of side $n$}.

To our knowledge, there were so far two attempts at constructing $\Gamma$-magic squares. Evans~\cite{Evans} characterized the existence of magic rectangles on cyclic groups (he called them \emph{modular magic squares}).

Sun and Yihui~\cite{Sun-Yihui} claimed to have found (without proof) $\Gamma$-magic squares for any $\Gamma$. They in fact only presented a construction for a $Z_n\oplus Z_n$-magic square $\ms{Z_n\oplus Z_n}(n)$ for $n$ odd.

In this paper, we prove that there exists a magic square of side $n$ for every $n>2$ with elements of any Abelian group with $n^2$ elements.

	The paper is organized as follows. 

	In Section~\ref{sec:defs and related}, we give definitions and an overview of known results.  In Section~\ref{sec:LS and KA} we introduce key tools such as Latin squares, Kotzig arrays, and $\Gamma$-Kotzig arrays.  Section~\ref{sec:prelim}  contains preliminary lemmas and observations needed to build $\Gamma$-magic squares. In Section~\ref{sec: const}, we present the main constructions and prove that $\Gamma$-magic squares exist for all Abelian groups $\Gamma$ of order $n^2$, for all $n>2$.  We conclude the paper with some final remarks and open questions in Section~\ref{sec:conclusion}.


\section{Definitions and related results}\label{sec:defs and related}

We start with a definition of a more general structure, $\Gamma$-magic rectangles.

\begin{definition}\label{def:abel-magic-rect}
	Let $(\Gamma,+)$ be an Abelian group of order $mn$ and MR$_{\Gamma}(m,n)$ be an $m\times n$ array whose entries are all elements of $\Gamma$. Then MR$_{\Gamma}(m,n)$ is a $\Gamma$-magic rectangle if all row sums are equal to some element $\rho\in\Gamma$ and all column sums are equal to some element $\sigma\in\Gamma$.
\end{definition}

Setting $m=n$ and adding conditions on diagonal sums, we obtain a definition of a $\Gamma$-magic square.

\begin{definition}\label{def:abel-magic-square}
	Let $(\Gamma,+)$ be an Abelian group of order $n^2$ and $\gms(n)$ be an $n\times n$ array whose entries are all elements of $\Gamma$. Then $\gms(n)$ is a $\Gamma$-magic square if all row, column, main and backward main diagonal sums are equal to the same element $\mu\in\Gamma$.
\end{definition}

To avoid confusion with the order of the group forming a magic  square, we will from now on refer to the order of MS$_{\Gamma}(n)$  as the \emph{side}. So, MS$_{\Gamma}(n)$  will be called a magic square of \emph{side} $n$.

Evans~\cite{Evans} characterized the existence of magic rectangles on cyclic groups (he called them \emph{modular magic squares}) by observing the following.

\begin{theorem}[Evans~\cite{Evans}]\label{thm:modular-magic-sq}
	There exists a non-trivial $Z_{mn}$-magic rectangle $\mathrm{MR}_{Z_{mn}}(m,n)$ if and only if $m>1,n>1$ and $m\equiv n\pmod2$.
\end{theorem}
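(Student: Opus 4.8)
The plan is to prove the two implications separately: necessity via a global summation argument, and sufficiency by explicit constructions organized according to the parities of $m$ and $n$.

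For necessity, I would first dispose of the degenerate cases. If $m=1$ and $n>1$ (or symmetrically $n=1$ and $m>1$), then the column sums are just the individual entries, which are pairwise distinct since each element of $Z_{mn}$ occurs exactly once; hence they cannot all coincide, forcing $m,n>1$. For the parity condition I would sum all entries in two ways. Writing $S=\sum_{k=0}^{mn-1}k \pmod{mn}$, one checks that $S\equiv 0$ when $mn$ is odd and $S\equiv mn/2$ when $mn$ is even. Summing the row constants gives $m\rho=S$ and summing the column constants gives $n\sigma=S$ in $Z_{mn}$. The image of the multiplication-by-$m$ map is the subgroup $\{0,m,2m,\dots,(n-1)m\}$ of multiples of $m$. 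If $m\equiv n\pmod 2$ fails, say $m$ even and $n$ odd, then $mn$ is even and $S\equiv mn/2=(m/2)n$, which is not a multiple of $m$ in $Z_{mn}$ precisely because $n$ is odd; hence $m\rho=S$ has no solution and no such rectangle exists. The case $m$ odd, $n$ even is symmetric through the column equation $n\sigma=S$.

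For sufficiency I would assume $m,n>1$ and $m\equiv n\pmod 2$ and build $\mathrm{MR}_{Z_{mn}}(m,n)$ by cases. The cleanest route is to start from an ordinary integer magic rectangle on $\{1,2,\dots,mn\}$ whenever one exists and reduce every entry modulo $mn$: the entries $1,\dots,mn$ reduce to all of $Z_{mn}$, each row sum becomes the common value $n(mn+1)/2 \bmod mn$ and each column sum $m(mn+1)/2 \bmod mn$, so the reduction is automatically a $Z_{mn}$-magic rectangle. Thus for both $m,n$ odd (so $m,n\ge 3$) and for both $m,n$ even with $(m,n)\neq(2,2)$ it suffices to invoke the classical existence of integer magic rectangles; alternatively one can give self-contained constructions, pairing $k$ with $mn-1-k$ to fill $2\times 2$ blocks in the even case and using a cyclic (diagonal) placement in the odd case.

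The delicate point, and the one I expect to be the main obstacle, is the single pair $(m,n)=(2,2)$: no integer magic rectangle on $\{1,2,3,4\}$ exists, so the reduction argument breaks down there. Here the modular structure helps rather than hurts, since the array $\left(\begin{smallmatrix}0&3\\1&2\end{smallmatrix}\right)$ has both row sums equal to $3$ and both column sums equal to $1$ in $Z_4$, giving the required $\mathrm{MR}_{Z_4}(2,2)$ directly. Combining the three cases yields a $Z_{mn}$-magic rectangle for every admissible pair and completes the sufficiency direction. The real care throughout is to guarantee that each construction genuinely uses every element of $Z_{mn}$ exactly once while keeping all row totals and all column totals constant; verifying this for the even-block construction, and confirming that $(2,2)$ is the only admissible pair escaping the integer theorem, is where the bulk of the work lies.
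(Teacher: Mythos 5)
Your proposal is correct, but there is nothing in the paper to compare it against: Theorem~\ref{thm:modular-magic-sq} is quoted from Evans~\cite{Evans} and the paper gives no proof of it (it only remarks afterwards that the characterization coincides with that for integer magic rectangles except at $(m,n)=(2,2)$). Your necessity argument is sound: the degenerate case $m=1$ or $n=1$ is excluded because the $mn$ distinct entries would have to serve as equal line sums, and the parity obstruction follows correctly from $m\rho=S=n\sigma$ with $S\equiv mn/2\pmod{mn}$ when $mn$ is even, since for $m$ even and $n$ odd the element $(m/2)n$ lies outside the subgroup $\langle m\rangle=\{0,m,\dots,(n-1)m\}$ of $Z_{mn}$. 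Your sufficiency argument via reduction of an integer magic rectangle modulo $mn$ is also valid (the entries $1,\dots,mn$ reduce to all of $Z_{mn}$ and the line sums remain constant), provided you cite the classical existence theorem for integer magic rectangles; the lone escaping case $(2,2)$ is correctly dispatched by the explicit $Z_4$ array, which matches the example in Figure~\ref{fig: MR_Z_4}. The only point to make explicit in a polished write-up is the statement of the integer theorem you invoke, namely that an $m\times n$ magic rectangle on $\{1,\dots,mn\}$ exists iff $m,n>1$, $m\equiv n\pmod 2$, and $(m,n)\neq(2,2)$, so that $(2,2)$ really is the unique admissible pair needing separate treatment.
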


One can notice that the characterization is exactly the same as for magic rectangles in integers, except the case of $m=n=2$ when a magic rectangle does not exist while a $Z_4$-magic $2\times2$ rectangle does (and is shown in Figure~\ref{fig: MR_Z_4}).

\begin{figure}[H]
	$$
		\begin{tabular}{|c|c|}
		\hline
		 0&1  \\
		\hline
		 3&2  \\
		\hline
		\end{tabular}
	$$
\caption{MR$_{Z_{4}}(2)$}
\label{fig: MR_Z_4}	
\end{figure}

{
Another example of a $Z_{n^2}$-magic rectangle which is not a $Z_{n^2}$-magic square is shown in Figure~\ref{fig: MR_Z_36}, 	where $n=4, \rho=6$ and $\sigma=14$.

\begin{figure}[H]
	$$
		\begin{tabular}{|c|c|c|c|}
		\hline
		  0& 1& 2& 3\\
		\hline
		  7& 6& 5& 4\\
		\hline
		  8& 9&10&11\\
		\hline
		 15&14&13&12\\
		\hline
		\end{tabular}
	$$
\caption{$\ms{Z_{36}}(6)$}
\label{fig: MR_Z_36}	
\end{figure}
}

\noindent

The identity element of a group $\Gamma$ will be denoted by $0$. Recall that an element $\iota\in\Gamma$ of order 2 (i.e., $\iota\neq 0$ and $2\iota=0$) is called an \emph{involution}. 
For convenience, let $\gr$ denote the set consisting of all Abelian groups which are
of odd order (and thus have no involution) or contain more than one involution and recall that $Z_{2n}\notin\mathcal{G}$ because it has a unique involution $\iota=n$.

Cichacz and Hinc generalized Theorem~\ref{thm:modular-magic-sq} for any finite Abelian group $\Gamma$ \cite{Cichacz-Hincz-2}.
\begin{theorem}[Cichacz, Hinc \cite{Cichacz-Hincz-2}]\label{thm:gamma-magic-rect}
	Let $\Gamma$ be an Abelian group of order $\Gamma=mn$. 	There exists a non-trivial $\Gamma$-magic rectangle MR$_{\Gamma}(m,n)$ if and only if $m>1,n>1$ and $m$ and $n$ are both even or $\Gamma\in\gr$.
\end{theorem}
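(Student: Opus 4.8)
The plan is to split the biconditional into the ``obvious'' arithmetic necessity and a constructive sufficiency, and to show that the stated parity/involution condition is exactly the arithmetic one. First I would record the only constraint every magic rectangle must meet. Summing all entries by rows gives $m\rho=\sum_{g\in\Gamma}g=:s$, and by columns $n\sigma=s$. Since in a finite Abelian group each $g$ pairs with $-g$ and only involutions are fixed, $s$ equals the sum of the involutions; thus $s=0$ when $\Gamma$ has no involution or more than one (i.e. $\Gamma\in\gr$) and $s=\iota$ when $\Gamma$ has a unique involution $\iota$ (i.e. $\Gamma\notin\gr$). Hence a necessary condition is $m,n>1$ (a side of length $1$ would force all entries along the long direction to be equal) together with $s\in m\Gamma\cap n\Gamma$. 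I would then check this reproduces the statement. If $\Gamma\notin\gr$, then $|\Gamma|=mn$ is even and the Sylow $2$-subgroup is cyclic, say $Z_{2^a}$, with $\iota$ its order-$2$ element; writing $v_2$ for the $2$-adic valuation, one has $\iota\in k\Gamma$ iff $v_2(k)\le a-1$. If $m,n$ are not both even then (the product being even) exactly one is odd, say $m$, so $v_2(n)=v_2(|\Gamma|)=a$, $nZ_{2^a}=\{0\}$, $n\Gamma$ has odd order, and $\iota\notin n\Gamma$: the equation $n\sigma=s$ is unsolvable and no rectangle exists. When $m,n$ are both even, $v_2(m)+v_2(n)=a$ with both positive forces each $\le a-1$, so $\iota\in m\Gamma\cap n\Gamma$; and when $\Gamma\in\gr$, $s=0$ lies in every subgroup. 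Thus the arithmetic condition is precisely ``$m,n>1$ and ($m,n$ both even or $\Gamma\in\gr$)''.

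For sufficiency the engine is a tensor-type lemma: given $\mathrm{MR}_{\Gamma_1}(m_1,n_1)=A$ and $\mathrm{MR}_{\Gamma_2}(m_2,n_2)=B$, the $(m_1m_2)\times(n_1n_2)$ array whose $((i,k),(j,l))$ entry is $(A_{ij},B_{kl})$ uses every element of $\Gamma_1\oplus\Gamma_2$ once and has constant row sums $(n_2\rho_1,n_1\rho_2)$ and column sums $(m_2\sigma_1,m_1\sigma_2)$, hence is an $\mathrm{MR}_{\Gamma_1\oplus\Gamma_2}(m_1m_2,n_1n_2)$. Combining this with the invariant factor decomposition $\Gamma\cong Z_{d_1}\oplus\cdots\oplus Z_{d_k}$ and Evans' Theorem~\ref{thm:modular-magic-sq} for the cyclic blocks, it then suffices to split the target as $m=\prod m_i$, $n=\prod n_i$ with $m_in_i=d_i$ and each $(m_i,n_i)$ admissible for the cyclic case (both sides $>1$, equal parity). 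This disposes of the ``both even'' instances whose factors split with matching parities, and of groups whose cyclic pieces are composite.

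The difficulty, which I expect to be the main obstacle, is that this product machinery is too rigid to reach every admissible $(m,n)$: a cyclic factor of prime order $p$ can only be cut as $(1,p)$, which is not a rectangle, and the mixed-parity instances the theorem permits when $\Gamma\in\gr$ (for example $\mathrm{MR}_{Z_2\oplus Z_6}(3,4)$) cannot be assembled from same-parity cyclic blocks at all. I would therefore supplement the backbone with direct ``gluing'' constructions that use exactly the structure present in the allowed cases: for odd-order $\Gamma$, build a rectangle directly from a complete-mapping / Latin-square arrangement (all sums then forced to $0$); for $\Gamma$ with at least two involutions, use a Klein four-subgroup $\{0,\iota_1,\iota_2,\iota_1+\iota_2\}$, whose four elements sum to $0$, to form zero-sum $2\times2$ blocks that absorb an odd dimension and repair the parity mismatch blocking the product method. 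The remaining labour is bookkeeping: verifying that every admissible triple $(\Gamma,m,n)$ is covered by some combination of the product lemma and these gluing steps, and checking a finite list of small base cases (such as $\mathrm{MR}_{Z_2\oplus Z_2}(2,2)$) by hand. I anticipate that this case analysis, rather than any single estimate, is where the real work lies.
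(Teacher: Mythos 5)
This statement is imported verbatim from Cichacz--Hinc \cite{Cichacz-Hincz-2}; the present paper gives no proof of it, so there is nothing internal to compare your attempt against. Judged on its own terms, your necessity argument is correct and complete: the identities $m\rho=s$ and $n\sigma=s$ with $s=\sum_{g\in\Gamma}g$, the fact that $s$ is the sum of the involutions (hence $s=0$ exactly when $\Gamma\in\gr$ and $s=\iota$ otherwise), and the $2$-adic computation showing $\iota\in m\Gamma\cap n\Gamma$ precisely when $m$ and $n$ are both even, together give the forward implication. This is the standard obstruction argument going back to Evans, and it is the right one.

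The sufficiency half, however, is a programme rather than a proof, and the part you defer is where essentially all of the content of the theorem lives. Your tensor lemma is correct as stated, but as you yourself observe it cannot produce: (i) rectangles over odd-order groups with prime invariant factors (e.g.\ $\mathrm{MR}_{Z_3\oplus Z_3}(3,3)$); (ii) mixed-parity rectangles over groups with several involutions (e.g.\ $\mathrm{MR}_{Z_2\oplus Z_6}(3,4)$); nor (iii) the ``both even'' cases where every invariant factor is $2$, such as $\mathrm{MR}_{Z_2\oplus Z_2}(2,2)$ and more generally $Z_2^{2k}$ -- note that a $1\times 2$ array over $Z_2$ is \emph{not} a magic rectangle, so the degenerate splittings $(1,d_i)$ contribute nothing to the product machinery, and these are not merely ``a finite list of small base cases.'' The proposed repairs (a complete-mapping/Latin-square construction for odd order, Klein-four $2\times 2$ zero-sum blocks to absorb an odd dimension) are the right kind of idea, but none of them is actually carried out, and no argument is given that the combination of product lemma plus these gadgets exhausts every admissible triple $(\Gamma,m,n)$. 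Until those constructions are written down and the covering argument is made, the ``if'' direction remains unproved; for a complete treatment you should follow (or reproduce) the constructions in \cite{Cichacz-Hincz-2}.
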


The existence of $Z_{n^2}$-magic squares is a trivial consequence of the existence of magic squares in integers. We state the theorem here for completeness. For an overview, see, e.g., Chapter 6 Section 33 in~\cite{handbook}.

\begin{theorem}[\cite{handbook}] \label{thm:msquare}
	There exists a  {magic square} of side $n$ if and only if $n>2$.
\end{theorem}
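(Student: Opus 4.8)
The plan is to establish this classical integer result in the two directions separately, handling necessity by a short impossibility argument and sufficiency by an explicit construction that branches on the residue of $n$ modulo $4$. For necessity, the only case to rule out is $n=2$ (the case $n=1$ being degenerate): here the four entries are $1,2,3,4$, so the common sum is forced to $(1+2+3+4)/2=5$, and equating the sum of the first row with the sum of the first column forces the two opposite corner entries to be equal, contradicting distinctness. Hence no magic square of side $2$ exists.

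For sufficiency I would split $n\ge 3$ into odd $n$, doubly even $n\equiv 0\pmod 4$, and singly even $n\equiv 2\pmod 4$, give a construction in each case, and verify that every row, every column, and both main diagonals sum to the magic constant $c=n(n^2+1)/2$.

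For odd $n$ I would use the Siamese (de la Loub\`ere) staircase method: place $1$ in the center of the top row, then insert $2,3,\dots,n^2$ by repeatedly stepping one cell up and one to the right with indices read modulo $n$ (so the board is toroidal), dropping one cell straight down whenever the target is already filled. A short invariant shows the walk visits each cell exactly once, and the toroidal regularity of the staircase makes the row, column, and diagonal sums coincide. For $n\equiv 0\pmod 4$ I would use complementation on diagonals: fill $1,\dots,n^2$ in natural reading order and then, in each aligned $4\times 4$ block, replace every entry $k$ on one of the block's two diagonals by $n^2+1-k$; pairing complemented with non-complemented entries in each line corrects all sums to $c$, and the diagonal cells are arranged so that both main diagonals also reach $c$.

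I expect the singly even case $n=2m$ with $m$ odd (so $m\ge 3$) to be the main obstacle. Here I would use the Strachey construction: build a magic square $M$ of odd side $m$ by the first method, place copies of $M$ shifted by the additive offsets $0,\,m^2,\,2m^2,\,3m^2$ into the four quadrants, and then exchange a prescribed set of columns between the top and bottom halves, with a special adjustment in the central row. The quadrant placement alone already yields correct row and column sums, so the entire difficulty lies in the correction step: one must check that the column exchange is chosen so that the two main diagonals, which come out wrong after the plain quadrant placement, are repaired while every row and column sum is left unchanged. This bookkeeping --- pinning down exactly which columns to swap so that the diagonal discrepancies cancel --- is the delicate part, whereas the odd and doubly even cases are comparatively mechanical.
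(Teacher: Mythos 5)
The paper does not prove this statement at all: Theorem~\ref{thm:msquare} is imported as a classical fact with a citation to the Handbook~\cite{handbook}, and the only ``proof'' content nearby is the remark that shifting the entries to $0,1,\dots,n^2-1$ yields Theorem~\ref{thm:cyclic}. Your proposal therefore goes a genuinely different route by actually supplying the classical argument. Your necessity argument for $n=2$ is correct and complete (row one and column one share the corner $a_{1,1}$, forcing $a_{1,2}=a_{2,1}$). Your three-way split for sufficiency --- Siamese staircase for odd $n$, diagonal complementation in $4\times4$ blocks for $n\equiv0\pmod 4$, and Strachey's quadrant-plus-column-swap construction for $n\equiv2\pmod 4$ --- is exactly the standard textbook proof and each construction is valid. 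The one place where your write-up is a plan rather than a proof is the point you yourself flag: in the singly even case you do not pin down which columns are exchanged or how the middle row is adjusted, and that specification (swap the first $(m-1)/2$ columns between the top and bottom halves and the last $(m-3)/2$ columns on the right, with the swapped block in the central row of the left quadrants shifted one cell to the right so that the diagonal entries are handled correctly) is precisely what makes the two main diagonals come out to $c$ while leaving the row and column sums intact. As a replacement for a citation of a classical theorem your outline is more than adequate; as a self-contained proof it would need that last piece of bookkeeping written out.
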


Replacing every element $a$ in an integer magic square by $a-1$ or replacing only $n^2$ by 0 and performing addition modulo $n^2$ gives the result for $Z_{n^2}$-magic squares immediately. In some constructions, we will be referring to the existence of magic squares in integers. In that case, we will always consider the entries $0,1,\dots,n^2-1$. The theorem follows immediately.


\begin{theorem}\label{thm:cyclic}
	There exists a  $Z_{n^2}$-magic square of side $n$ for every \mmag{$n>2$}.
\end{theorem}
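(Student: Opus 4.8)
The plan is to derive this directly from Theorem~\ref{thm:msquare}, the classical existence theorem for integer magic squares, via reduction modulo $n^2$. First I would invoke Theorem~\ref{thm:msquare} to fix, for any $n>2$, an integer magic square $M$ of side $n$ with entries $1,2,\dots,n^2$ and magic constant $c=n(n^2+1)/2$; thus every row, every column, and both the main and backward main diagonals of $M$ sum to $c$.

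Next I would pass from $\mathbb{Z}$ to $Z_{n^2}$ through the canonical projection $\pi\colon\mathbb{Z}\to Z_{n^2}$ given by $\pi(a)=a\bmod n^2$. Concretely this amounts to leaving the entries $1,\dots,n^2-1$ unchanged and replacing the single entry $n^2$ by $0$. Because the integers $1,2,\dots,n^2$ form a complete system of residues modulo $n^2$, the resulting array $M'$ has as its entries exactly the $n^2$ distinct elements of $Z_{n^2}$, so $M'$ satisfies the entry condition of Definition~\ref{def:abel-magic-square}.

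Finally I would check the line sums. Since $\pi$ is a group homomorphism, it commutes with summation along any line, so every row sum, every column sum, and each of the two diagonal sums of $M'$ equals $\pi(c)=c\bmod n^2$, a single fixed element $\mu\in Z_{n^2}$. Hence all $2n+2$ prescribed sums coincide, and $M'$ is a $Z_{n^2}$-magic square of side $n$ with $\mu=\pi\!\left(n(n^2+1)/2\right)$.

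There is essentially no obstacle here: the statement is a routine corollary of Theorem~\ref{thm:msquare}, and the only things to verify are the immediate facts that $\{1,\dots,n^2\}$ is a full residue system modulo $n^2$ and that reduction modulo $n^2$ preserves all line sums. The genuine difficulty of the paper lies not in the cyclic case but in handling arbitrary Abelian groups $\Gamma$ of order $n^2$, where no underlying integer magic square is available and the group may contain an involution; that is where the Latin-square and Kotzig-array machinery developed in the later sections will be needed.
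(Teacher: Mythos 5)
Your argument is correct and is exactly the paper's own: the authors state just before the theorem that replacing each entry $a$ of an integer magic square by $a-1$ (or just $n^2$ by $0$) and reducing modulo $n^2$ yields the result immediately from Theorem~\ref{thm:msquare}. Your write-up merely spells out the same reduction in more detail, so there is nothing to add.
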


{
It is easy to check that the condition $n>2$ is not only sufficient but also necessary for both Abelian groups of order four, $Z_4$ and $Z_2\oplus Z_2$. All $Z_4$- and $Z_2\oplus Z_2$-squares (up to symmetry) are shown in Figure~\ref{fig: n=2}.

\begin{figure}[H]
	$$
		\begin{tabular}{|c|c|}
		\hline
		 0&1  \\
		\hline
		 3&2  \\
		\hline
		\end{tabular}
\hskip20pt
		\begin{tabular}{|c|c|}
		\hline
		 0&2  \\
		\hline
		 3&1  \\
		\hline
		\end{tabular}
\hskip20pt
		\begin{tabular}{|c|c|}
		\hline
		 (0,0)&(0,1)  \\
		\hline
		 (1,0)&(1,1)  \\
		\hline
		\end{tabular}
\hskip20pt
		\begin{tabular}{|c|c|}
		\hline
		 (0,0)&(1,1)  \\
		\hline
		 (1,0)&(0,1)  \\
		\hline
		\end{tabular}
\hskip20pt
		\begin{tabular}{|c|c|}
		\hline
		 (0,0)&(1,1)  \\
		\hline
		 (0,1)&(1,0)  \\
		\hline
		\end{tabular}
	$$
\caption{$Z_4$- and $Z_2\oplus Z_2$-squares of side 2}
\label{fig: n=2}	
\end{figure}

Therefore, we can state the following.

\begin{obs}\label{obs:necessary}
	There is no $\Gamma$-magic square $\gms(2)$.
\end{obs}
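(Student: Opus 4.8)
The plan is to exploit the fact that, by Definition~\ref{def:abel-magic-square}, a $\Gamma$-magic square of side $2$ has $n^2=4$ cells filled with \emph{all} four elements of the order-$4$ group $\Gamma$, so its four entries are pairwise distinct. Since $\Gamma$ is a group, every line-sum equation can be manipulated by cancellation, and I expect this to collide immediately with distinctness, with no group-specific case analysis needed.

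Concretely, write the putative square as
$$
\begin{pmatrix} a & b \\ c & d \end{pmatrix},
$$
where $\{a,b,c,d\}=\Gamma$ and all six line sums (two rows, two columns, two diagonals) equal the common value $\mu\in\Gamma$. First I would compare the first row with the first column: both sums equal $\mu$, so
$$
a+b=\mu=a+c.
$$
Adding $-a$ to both sides, which is legitimate in any Abelian group, yields $b=c$, contradicting the distinctness of the entries. Hence no such square exists.

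The hard part is essentially nonexistent: the result follows from cancellation in a group together with distinctness of the entries, and the argument uses only two of the defining equations, so it is entirely independent of the diagonal conditions and in fact already rules out a $\Gamma$-\emph{semi}-magic square of side $2$. It is also independent of which Abelian group of order $4$ is considered (both $Z_4$ and $Z_2\oplus Z_2$), so a single cancellation settles all cases simultaneously. The one point worth stating explicitly is the distinctness of the entries forced by the requirement that the array contain all of $\Gamma$; this is precisely the feature that fails for $\Gamma$-magic rectangles, where the row constant $\rho$ and column constant $\sigma$ need not coincide (cf.\ Figure~\ref{fig: MR_Z_4}), so there no cancellation contradiction arises. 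The exhaustive check underlying Figure~\ref{fig: n=2} gives an alternative computational verification, but the cancellation argument exhibits the obstruction as structural rather than accidental.
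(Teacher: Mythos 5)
Your cancellation argument is correct and complete: the four entries of a $2\times2$ array containing all of $\Gamma$ are pairwise distinct, and equating the first-row sum $a+b$ with the first-column sum $a+c$ forces $b=c$, a contradiction. This is a genuinely different route from the paper, which instead lists all $2\times2$ squares over $Z_4$ and $Z_2\oplus Z_2$ up to symmetry (Figure~\ref{fig: n=2}) and observes by inspection that none of them is magic --- an exhaustive finite check rather than a structural argument. Your proof is shorter, uses only two of the six line conditions, treats both groups of order four uniformly, and makes the obstruction transparent: it is exactly the coincidence of the row constant and the column constant required by Definition~\ref{def:abel-magic-square} that fails, which is also why the $\Gamma$-magic \emph{rectangle} of Figure~\ref{fig: MR_Z_4} (where $\rho\neq\sigma$ is permitted) can exist. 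What the paper's enumeration buys in exchange is a complete picture of which near-magic $2\times2$ arrays do occur over each group, which is of some independent illustrative value but is not needed for the nonexistence claim. Either argument is acceptable; yours is the cleaner proof of the observation as stated.
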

}

Sun and Yihui~\cite{Sun-Yihui} (without proper proof) made the following claim. 

\begin{claim}[Sun, Yihui~\cite{Sun-Yihui}]\label{clm:gamma-magic-sq}
	For every $n\geq3$ and any Abelian group $\Gamma$ of order $n^2$ there exists a $\Gamma$-magic square $\gms(n)$ of side $n$.
\end{claim}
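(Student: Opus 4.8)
The plan is to establish Claim~\ref{clm:gamma-magic-sq} by separating the problem according to the structure of the group $\Gamma$, and by building the desired square from the magic rectangles and Latin/Kotzig-type arrays promised in the later sections. The high-level strategy I would pursue is a \emph{product construction}: write an entry of $\gms(n)$ as a pair (or a sum) of a contribution coming from row-structure and a contribution coming from column-structure, so that each row, each column, and each of the two diagonals automatically collects every element of $\Gamma$ exactly the right number of times to sum to a fixed $\mu$. Concretely, if one can tile the $n\times n$ board with a "horizontal" $\Gamma$-magic rectangle MR$_{\Gamma}(m,n)$ (handling rows) superimposed on a Kotzig-type array (handling columns and diagonals), the two independent families of sums can be forced to the same constant.

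The first reduction I would make is to split on whether $n$ is odd or even, since the diagonal constraints behave very differently in the two parities; the odd case is where the Sun--Yihui sketch already gives a $\ms{Z_n\oplus Z_n}(n)$, and I expect the classical odd-order Latin-square/superposition method (a pair of orthogonal Latin squares, one for "row labels" and one for "column labels") to push through cleanly, because for odd $n$ the main-diagonal and back-diagonal cells each meet a full transversal and the relevant sums telescope to $0$. The second reduction is on the structure of $\Gamma$ itself: by the classification of finite Abelian groups, $\Gamma$ decomposes as a product of cyclic groups, and I would handle the Sylow-$2$ part separately. The clean subcases are $\Gamma$ cyclic (covered already by Theorem~\ref{thm:cyclic}) and $\Gamma\in\gr$, i.e.\ groups with no involution or with more than one involution, where Theorem~\ref{thm:gamma-magic-rect} guarantees the magic-rectangle building blocks I need in both the even-by-even and the $\Gamma\in\gr$ regimes.

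The main obstacle, and the place where I would spend most of the effort, is exactly the groups \emph{outside} $\gr$ that are not cyclic, that is, groups possessing a \emph{unique} involution $\iota$ but having $Z_{n^2}$-magic-rectangle tools unavailable in the convenient form (the obstruction in Theorem~\ref{thm:gamma-magic-rect} is precisely the unique-involution case with odd side-parities). For such $\Gamma$ the naive superposition fails because along a diagonal the involution $\iota$ gets collected an odd number of times and leaves a stubborn residue $\iota\neq 0$ that cannot be cancelled by rearrangement. The plan here is to isolate the unique involution and treat the factor carrying it by hand: write $\Gamma\cong A\oplus B$ where $B$ is the $2$-part, build a $\Gamma$-magic square coordinatewise using an $A$-magic square on the first coordinate (via the $\gr$/cyclic machinery) and a carefully hand-tuned small square on the $B$-coordinate, then patch the diagonals by a bounded number of local swaps that preserve all row and column sums while correcting the two diagonal sums simultaneously. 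I would verify that such a correcting swap exists whenever $n>2$, which is presumably why the side-$2$ obstruction in Observation~\ref{obs:necessary} is genuinely the only exception.

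Finally, I would assemble these pieces into a single induction-free case analysis: (i) $\Gamma$ cyclic, invoke Theorem~\ref{thm:cyclic}; (ii) $\Gamma\in\gr$, superpose a $\Gamma$-magic rectangle with a $\Gamma$-Kotzig array (introduced in Section~\ref{sec:LS and KA}) so that rows/columns/diagonals each see a constant sum; (iii) $\Gamma$ with a unique involution and non-cyclic, use the coordinate-split plus diagonal-correction argument above. The step I expect to be genuinely delicate is (iii), both in arguing the existence of the correcting configuration and in checking that the small base cases for small $n$ (where there is little room to maneuver) still admit an explicit square; I would handle those smallest sides by direct exhibition, exactly as the paper does for side $2$ in Figure~\ref{fig: n=2}.
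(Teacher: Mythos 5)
Your overall architecture (decompose $\Gamma$, superimpose a ``row'' structure with a Kotzig-type ``column'' structure, treat the unique-involution $2$-part by hand, and exhibit small cases explicitly) matches the paper's strategy in spirit, but there are two places where the plan as written does not close, and both are exactly where the paper has to work hardest.

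First, your case (ii) --- ``$\Gamma\in\gr$, superpose a $\Gamma$-magic rectangle with a $\Gamma$-Kotzig array so that rows/columns/diagonals each see a constant sum'' --- does not control the diagonals. A $\Gamma$-magic rectangle with $m=n$ is only a semi-magic square, and a $\Gamma$-Kotzig array constrains row and column sums, not diagonal transversals; nothing in the superposition forces the two diagonals to telescope. The paper's workhorse, Lemma~\ref{lem:SWL}, instead requires a decomposition $\Gamma\cong\Gamma_0\oplus H$ in which $|\Gamma_0|=m^2$ and $|H|=k^2$ are \emph{both perfect squares} and a $\Gamma_0$-magic \emph{square} (diagonals included) already exists; the Kotzig array is only used to inflate it. This perfect-square constraint is what drives the entire case analysis, and it leaves an unavoidable hard subcase that your trichotomy misses entirely: $\Gamma=Z_{p^\beta}\oplus Z_{p^\gamma}$ with $\beta,\gamma$ both odd (e.g.\ $Z_3\oplus Z_{27}$ of side $9$). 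For odd $p$ this group lies in $\gr$, so your plan would declare it easy, yet it admits no direct-summand decomposition into two factors of perfect-square order, and no pair of orthogonal Latin squares realizes it. The paper needs a separate Kronecker-type construction (Lemmas~\ref{lem:Z_k^2m x Z_m} and~\ref{lem:Z_2^2d+1 x Z_2^2d+3}, assembled in Lemma~\ref{lem: Z_p^c + Z_p^c+d}) multiplying an integer magic square against a $Z_m\oplus Z_m$-magic square; that idea is absent from your proposal.

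Second, in your case (iii) the sentence ``I would verify that such a correcting swap exists whenever $n>2$'' defers the entire technical content. There is no uniform local-swap argument in the paper: the unique-involution groups are handled by several distinct explicit constructions ($Z_2\oplus Z_{2^{2\alpha-1}}$ in Observation~\ref{obs: Z_2 + Z_2^c} via closed-form entries, $Z_4\oplus Z_{2^{2\gamma}}$ in Observation~\ref{obs:Z_2gamma+Z_4}, $Z_{4n}\oplus Z_n$ in Lemma~\ref{lem:Z_4n + Z_n} via a four-step build whose final step is a hand-tuned list of exchanges depending on $n\bmod 4$, and an induction in Lemma~\ref{lem:gl2} using complementary-square placement along the diagonals). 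Each of these requires its own verification that the swaps preserve row and column sums while fixing \emph{both} diagonals simultaneously; asserting their existence is the theorem, not a step of its proof. As it stands the proposal is a plausible roadmap but not a proof: the diagonal problem for odd-exponent $p$-groups and the explicit corrections for unique-involution groups are genuine gaps.
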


They in fact only presented a construction for a $Z_n\oplus Z_n$-magic square $\ms{Z_n\oplus Z_n}(n)$ for $n$ odd.

\begin{theorem}[Sun, Yihui~\cite{Sun-Yihui}]\label{thm: Z_n x Z_n odd}
	For every odd $n\geq3$ there exists a  ${Z_n\oplus Z_n}$-magic square  $\ms{Z_n\oplus Z_n}(n)$ of side $n$.
\end{theorem}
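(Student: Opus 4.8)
The plan is to exhibit an explicit placement of the $n^2$ elements of $\Gamma=Z_n\oplus Z_n$ into the array and verify the four families of sums directly, taking the common value to be the identity $\mu=(0,0)$. This choice is natural: summing all $n$ row sums recovers the total of all elements of $\Gamma$, which is $(0,0)$ because for odd $n$ one has $0+1+\cdots+(n-1)=n(n-1)/2\equiv0\pmod n$, so each coordinate of $Z_n\oplus Z_n$ sums to $0$; and since $ng=0$ for every $g\in\Gamma$, no further constraint on $\mu$ is imposed. So I would simply fix $\mu=(0,0)$ and build a single balanced array.

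First I would index the rows and columns by $0,1,\dots,n-1$ and place in cell $(i,j)$ the element determined by a fixed linear rule $(i,j)\mapsto(ai+bj,\ ci+dj)\bmod n$ whose coefficient matrix is invertible modulo $n$. The simplest instance is the lexicographic filling $(i,j)\mapsto(i,j)$; the symmetric rule $(i,j)\mapsto(i+j,\ i-j)$ also works and is a bijection precisely because $2$ is a unit modulo the odd $n$. Since the coefficient matrix is invertible over $Z_n$, the map is a bijection of $Z_n\oplus Z_n$ onto itself, so every element of $\Gamma$ occurs exactly once, fulfilling the requirement of Definition~\ref{def:abel-magic-square} that the entries be exactly the elements of $\Gamma$.

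Next I would check the four sum conditions, all of which reduce to the single identity $\sum_{i=0}^{n-1}i\equiv0\pmod n$. For a fixed row $i$, summing over $j$ gives $\bigl(ani+b\sum_{j}j,\ cni+d\sum_{j}j\bigr)=(0,0)$, and the columns are symmetric; these hold for any coefficients thanks to the factor $n$ and the vanishing of $\sum_j j$. For the main diagonal $j=i$ the sum is $\bigl((a+b)\sum_i i,\ (c+d)\sum_i i\bigr)=(0,0)$, and for the backward diagonal $j=n-1-i$ a short computation collapses the entry sum to multiples of $\sum_i i$ and of $n$, again giving $(0,0)$. Hence every line sums to $\mu=(0,0)$ and the array is the desired $\ms{Z_n\oplus Z_n}(n)$.

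The hard part is not any difficult calculation but pinpointing where oddness is genuinely needed: the whole verification rests on $\sum_{i=0}^{n-1}i\equiv0\pmod n$, which fails for even $n$ (there it equals $n/2\not\equiv0$), and on the invertibility of the coefficient matrix modulo $n$, which for the symmetric rule requires $2$ to be a unit. Thus the same linear template cannot be pushed to even $n$, which explains why this theorem is stated only for odd $n$ and why the general (in particular even) case will instead require the heavier machinery of Latin squares, Kotzig arrays, and $\Gamma$-Kotzig arrays developed in the later sections.
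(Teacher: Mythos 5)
Your proposal is correct: the linear filling $(i,j)\mapsto(ai+bj,\,ci+dj)$ with invertible coefficient matrix is a bijection onto $Z_n\oplus Z_n$, and every row, column, and diagonal sum collapses to multiples of $n$ and of $\sum_{i=0}^{n-1}i\equiv 0\pmod n$, which is exactly where oddness of $n$ enters. This is essentially the same construction the paper relies on --- it cites Sun and Yihui for this theorem without reproducing a proof, but the displayed $\ms{Z_3\oplus Z_3}(3)$ is precisely your lexicographic instance $(i,j)\mapsto(i,j)$ --- so you have in effect supplied the short verification the paper omits.
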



\section{Tools: Latin squares and Kotzig arrays}\label{sec:LS and KA}

We will often utilize the Fundamental Theorem of Finite Abelian Groups and write $\Gamma=Z_{p_1^{s_1}}\oplus Z_{p_2^{s_2}}\oplus\dots\oplus Z_{p_t^{s_t}}$ where $p_i\leq p_{i+1}$ for $i=1,2,\dots,t-1$ are primes, $s_i$ are positive integers and if $p_i=p_{i+1}$, then $s_i\leq s_{i+1}$. However, we may occasionally change the order of primes when convenient.

Some of our constructions are based on \emph{mutually orthogonal Latin squares.} We provide the definitions slightly adjusted to our needs.

\begin{definition}\label{def:LS}
	A \emph{Latin square} LS$(n)$ of side $n$ is an $n\times n$ array with entries $a_{i,j}\in Z_n, i,j=1,2,\dots,n$, such that every row and every column contains each element of $Z_n$ exactly once. A Latin square is \emph{doubly diagonal} if moreover both the main forward and backward diagonals contain each element exactly once.
\end{definition}

	It is well known that Latin squares exist for every side $n>2$. For an overview, see, e.g., Chapter 3 in~\cite{handbook}.

\begin{theorem}[\cite{handbook}] \label{thm:LS}
	There exists a Latin square of side $n$ if and only if $n>2$.
\end{theorem}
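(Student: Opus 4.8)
The plan is to establish the substantive direction -- that a Latin square of side $n$ exists for every $n>2$ -- by an explicit group-theoretic construction, and then to settle the converse by the same convention already adopted for Theorem~\ref{thm:msquare}. First I would take the cyclic group $Z_n$ on the symbol set $\{0,1,\dots,n-1\}$ and define the array $\ls(n)$ with entries $a_{i,j}=i+j \pmod n$ for $i,j\in\{0,1,\dots,n-1\}$; this is precisely the Cayley (addition) table of $Z_n$, so the construction is completely uniform in $n$.

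The key step is to verify the Latin property, and it reduces to the cancellation law in a group. Fix a row index $i$: the translation $x\mapsto i+x$ is a bijection of $Z_n$ (its inverse is $x\mapsto x-i$), so as $j$ ranges over $Z_n$ the entries $a_{i,j}=i+j$ run through every element of $Z_n$ exactly once; hence each row contains each element precisely once. Since the definition is symmetric in $i$ and $j$, the identical argument applied to a fixed column via the bijection $x\mapsto x+j$ shows every column is also a permutation of $Z_n$. Thus $\ls(n)$ is a Latin square of side $n$. The only property used is that $Z_n$ is a group, so the same argument shows more generally that \emph{the Cayley table of any group of order $n$ is a Latin square of side $n$}; I would record this general form explicitly, since it is the version the non-cyclic constructions of Section~\ref{sec: const} will actually invoke.

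For the converse I would simply observe that the construction above yields a Latin square for \emph{every} $n\ge 1$, so no genuine nonexistence occurs at small sides; the restriction to $n>2$ is the convention matching Theorem~\ref{thm:msquare} (where, unlike here, side $2$ is truly impossible), under which the degenerate sides $n\in\{1,2\}$ are excluded from consideration. In this sense the ``only if'' direction carries no real mathematical content and nothing further needs to be proved.

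The main point of care is not the plain Latin square itself -- which is immediate from the Cayley table -- but anticipating the stronger \emph{doubly diagonal} variant of Definition~\ref{def:LS}. The cyclic table $a_{i,j}=i+j$ has backward diagonal entries $a_{i,j}$ with $i+j\equiv n-1$, hence constant there, so it is never doubly diagonal, and its forward diagonal $a_{i,i}=2i\pmod n$ is a transversal only when $n$ is odd. Consequently, securing the diagonal conditions later will require a different, $n$-dependent construction rather than the naive cyclic one; for the statement at hand, however, the Cayley-table construction suffices and presents no obstacle.
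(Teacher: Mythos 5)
Your proof is correct. The paper itself offers no argument for this statement -- it is quoted from the Handbook \cite{handbook} as a known fact -- so there is no internal proof to compare against; your Cayley-table construction $a_{i,j}=i+j \pmod n$ is the standard and complete justification of the existence direction, and the cancellation-law verification of the row and column properties is exactly right. You are also right to point out that the ``only if'' direction is vacuous: Latin squares of sides $1$ and $2$ do exist (the addition tables of $Z_1$ and $Z_2$), so the restriction $n>2$ in the statement is not a genuine nonexistence result but an artifact of phrasing the theorem in parallel with Theorem~\ref{thm:msquare}, where side $2$ really is impossible. Your closing observation -- that the cyclic table is constant on the backward diagonal and hence never doubly diagonal, so the diagonal conditions needed later (Theorem~\ref{thm:diag-MOLS}) cannot be obtained from this naive construction -- is accurate and usefully anticipates why the paper must invoke the Heinrich--Hilton result rather than anything elementary.
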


	Another well known and extensively studied notion is \emph{mutually orthogonal Latin squares.}

\begin{definition}\label{def:MOLS}
	Two Latin squares of side $n$, $A_n=\{a_{i,j}\}_{i,j=1}^n$ and $B_n=\{b_{i,j}\}_{i,j=1}^n$, are \emph{mutually orthogonal} if the ordered pairs $(a_{i,j},b_{i,j})$ are all distinct. In the language of Abelian groups, they form the group $Z_n\oplus Z_n$.	
\end{definition}

The existence of pairs of mutually orthogonal Latin squares (MOLS) is well known.

\begin{theorem}[\cite{handbook}] \label{thm:MOLS}
	There exists a pair of mutually orthogonal Latin squares of side $n$ if and only if $n>2$ and $n\neq6$.
\end{theorem}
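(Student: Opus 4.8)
The plan is to prove the two directions separately, and to be candid that most of the work lies in sufficiency. For necessity, I would first dispose of the small case $n=2$: by Definition~\ref{def:MOLS} an orthogonal pair would have to realize all four ordered pairs of $Z_2\oplus Z_2$, and since up to the forced labeling there are essentially only two Latin squares of side $2$, a direct check shows no choice produces four distinct pairs, so none exist. The genuinely hard exclusion is $n=6$, which is Euler's famous ``$36$ officers'' problem; here I would invoke (or reproduce) Tarry's exhaustive argument that no orthogonal pair of side $6$ exists. A modern streamlining fixes one square in reduced form and classifies its potential mates, but the essential content remains a finite case analysis rather than a structural argument.

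For sufficiency I would build the MOLS in three stages. Stage one handles prime powers: if $n=q$ is a prime power with $q\geq3$, then working in the field $\mathbb{F}_q$ I set $L_\alpha(i,j)=\alpha i+j$ for each nonzero $\alpha$. Each $L_\alpha$ is Latin, and for $\alpha\neq\beta$ the superposition map $(i,j)\mapsto(\alpha i+j,\beta i+j)$ has invertible coefficient matrix (determinant $\alpha-\beta\neq0$), hence is a bijection, so the $q-1\geq2$ squares are mutually orthogonal. Stage two is the product (MacNeish) construction: if there are $r$ MOLS of side $m$ and $r$ of side $m'$, a Kronecker-type superposition on index pairs yields $r$ MOLS of side $mm'$. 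Writing $n=\prod_i p_i^{a_i}$, this gives at least $\min_i(p_i^{a_i}-1)$ MOLS, which is at least $2$ precisely when no prime-power factor equals $2^1$, that is whenever $n\not\equiv2\pmod4$. Thus the product construction settles every odd $n\geq3$ (where Theorem~\ref{thm:LS} already guarantees the underlying squares) and every $n$ divisible by $4$.

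The remaining cases $n\equiv2\pmod4$ with $n>6$ are by far the hardest, and this is where I expect the main obstacle to lie. The product bound collapses here because the lone factor $2$ contributes only a single Latin square, and indeed Euler conjectured that no orthogonal pair exists for any such $n$. Disproving that conjecture is the crux. I would follow the Bose--Shrikhande--Parker strategy: express these orders through pairwise balanced and group-divisible designs, assemble MOLS from transversal designs built on the blocks, and combine the resulting ingredients recursively across the congruence classes, seeding the recursion with a finite list of small hand-built examples (such as $n=10,14,18,22$). The difficulty is that this is not one clean construction but a substantial recursive design-theoretic machine, which is exactly why the statement resisted proof for nearly two centuries; for the purposes of this paper, however, it suffices to cite it as the established result of Theorem~\ref{thm:MOLS}.
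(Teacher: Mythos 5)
Your outline is a faithful sketch of the classical proof, but note that the paper does not prove this statement at all: Theorem~\ref{thm:MOLS} is quoted verbatim from the Handbook~\cite{handbook} as a known building block, so there is no internal argument to compare yours against. Taken on its own terms, your three-stage plan is the standard and correct route: the finite-field squares $L_\alpha(i,j)=\alpha i+j$ with the determinant argument for orthogonality, the MacNeish--Kronecker product giving $\min_i\bigl(p_i^{a_i}-1\bigr)$ MOLS (which is $\geq 2$ exactly when $n\not\equiv 2\pmod 4$), and the Bose--Shrikhande--Parker machinery for $n\equiv 2\pmod 4$, $n>6$. The two places where you candidly punt --- Tarry's exhaustive verification that no pair of side $6$ exists, and the full recursive pairwise-balanced-design construction seeded by small hand-built examples --- are precisely the parts that cannot be compressed into a short argument, so invoking them is the right call; the paper does exactly the same thing by citing the result wholesale. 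One small remark: for $n=2$ you could shortcut the case check by the general bound that at most $n-1$ MOLS of side $n$ exist, which immediately rules out a pair. Nothing in your proposal is wrong, and nothing in the paper depends on more than the truth of the statement.
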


A corresponding result on doubly diagonal mutually orthogonal Latin squares (DDMOLS) by Heinrich and Hilton~\cite{Heinrich-Hilton} will be an important building block in our construction.

\begin{theorem}[Heinrich, Hilton~\cite{Heinrich-Hilton}] \label{thm:diag-MOLS}
	There exists a pair of doubly diagonal mutually orthogonal  Latin squares of side $n$ if and only if $n\neq2, 3, 6$ and possibly $10$. 
\end{theorem}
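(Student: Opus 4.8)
The plan is to prove both directions of the stated equivalence. For necessity I would dispose of the three genuine exceptions by short arguments. For $n=2$, the two cells on either diagonal of any $2\times2$ Latin square carry the same symbol, so no Latin square of side $2$ is doubly diagonal; a brief case analysis shows the same rigidity forces one of the two diagonals of every LS$(3)$ to be constant, so doubly diagonal Latin squares — let alone orthogonal pairs of them — fail to exist for $n=2,3$. For $n=6$ there is no pair of orthogonal Latin squares at all by Theorem~\ref{thm:MOLS}, so \emph{a fortiori} no doubly diagonal pair. The real content is the sufficiency for all remaining sides, with $n=10$ deliberately left undecided.

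For the main positive engine I would use the finite-field construction. Over $\mathbb{F}_q$, index rows and columns by field elements and set $L_k(x,y)=kx+y$. Each $L_k$ with $k\neq0$ is a Latin square, and $L_{k_1},L_{k_2}$ are orthogonal whenever $k_1\neq k_2$. On the forward diagonal $x=y$ the entry is $(k+1)x$, a permutation of $\mathbb{F}_q$ exactly when $k\neq-1$; on a backward diagonal $x+y=c$ the entry is $(k-1)x+c$, a permutation exactly when $k\neq1$. Hence $L_k$ is doubly diagonal iff $k\notin\{0,1,-1\}$, and two admissible values of $k$ exist precisely when $q\geq4$ (there are $q-3$ choices for odd $q$ and $q-2$ for even $q$). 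This yields a pair of doubly diagonal MOLS for every prime power $q\geq4$, i.e.\ for $q=4,5,7,8,9,11,\dots$, leaving out only the prime powers $2$ and $3$.

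To reach composite, non-prime-power sides I would establish a product lemma: if DDMOLS of sides $m$ and $n$ exist, then so do DDMOLS of side $mn$. Indexing the large square by $Z_m\times Z_n$ through the lexicographic bijection $\phi(a,b)=an+b$, the usual Kronecker product of the two orthogonal pairs is again an orthogonal pair. The point is bookkeeping on the diagonals: the main diagonal ($r=c$) of the product corresponds to the pair of component main diagonals, and since $\phi(r)+\phi(c)=mn-1$ forces both $r_1+c_1=m-1$ and $r_2+c_2=n-1$, the backward diagonal of the product corresponds exactly to the pair of component backward diagonals. Double diagonality of both factors then makes both diagonals of the product transversals. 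Closing the prime-power family under this product already gives DDMOLS for every side that factors into prime powers each $\geq4$.

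The hard part is everything that products of prime-powers-$\geq4$ miss, which is an infinite set: every side divisible by $2$ or $3$ that admits no such factorization — for instance $12,14,15,18,21,22,24,\dots$ and, systematically, all $n\equiv2\pmod4$. These I expect to require either direct ad hoc constructions for the troublesome small orders or a stronger recursive framework — the doubly diagonal analogue of Wilson-type and PBD-closure constructions built from incomplete (``frame'') DDMOLS — to cover the residual arithmetic progressions, after which only finitely many orders remain to be checked by hand. The oddly-even sides are the most delicate, and, together with the single stubborn order $n=10$, they account for the ``possibly $10$'' hedge in the statement. I expect this completion of the non-prime-power sides, rather than the field or product steps, to be the genuine obstacle.
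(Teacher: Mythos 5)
First, a point of reference: the paper does not prove this statement at all. It is imported verbatim from Heinrich and Hilton, and the authors themselves add the caveat that the original paper left the orders $10, 12, 14, 15, 18, 26$ unresolved, with all but $10$ settled only in subsequent work they could not locate. So there is no internal proof to compare you against, only the question of whether your argument would stand on its own.

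It would not, and you have in effect conceded this. The parts you actually argue are correct: non-existence for $n=2,3$ (up to a symbol permutation there are only two normalized squares of side $3$, each with one constant diagonal) and for $n=6$ (no MOLS at all, by Theorem~\ref{thm:MOLS}); the field construction $L_k(x,y)=kx+y$ with $k\notin\{0,1,-1\}$, which does yield two doubly diagonal MOLS for every prime power $q\geq4$, modulo the small unaddressed point that for non-prime $q$ you must order the elements of $\mathbb{F}_q$ so that $e_i+e_{q-1-i}$ is constant, or else the backward diagonal of the array is not a set of the form $x+y=c$; and the Kronecker product lemma, whose diagonal bookkeeping under $\phi(a,b)=an+b$ checks out. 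The gap is that these two tools provably cannot reach any $n$ that is exactly divisible by $2$ or exactly divisible by $3$ (in particular every $n\equiv2\pmod 4$, and $12,14,15,18,21,22,24,\dots$), since any factorization of such an $n$ into prime powers must include a factor equal to $2$ or $3$. That residual set is infinite and is precisely where the content of the theorem lives; for it you offer only a programme (ad hoc small orders plus a Wilson/PBD-closure analogue built from incomplete doubly diagonal arrays), which is indeed the strategy Heinrich and Hilton pursue, but it is not carried out here, and historically it is exactly the step that left a handful of orders open for years. Until that machinery is actually constructed and the finitely many leftover orders are exhibited, the statement is not proved.
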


{The paper actually lists some unresolved cases (10, 12, 14, 15, 18, and 26), but a MathSciNet review MR0710888 states that ``Subsequent research has shown that all the above orders can be realized except possibly 10.'' We were unable to find where these cases were solved.}

In \cite{M-W} Marr and Wallis gave a definition of a Kotzig array as follows.

\begin{definition}\label{def: KA}
	A \emph{Kotzig array} $\ka(j,k)$ is
	a $j\times k$ grid where each row is a permutation of $\{1,2,\dots,k\}$ and each column has the same sum $j(k+1)/2$. 
\end{definition}

Note that a  Latin square  is a special case of a Kotzig array. Moreover a Kotzig array 
has the “magic-like” property with respect to the sums of rows and columns, but at the same time allows entry repetitions in columns. Kotzig arrays play an important role in graph labelings and will be used in our constructions.  

\begin{theorem}[Marr, Wallis \cite{M-W}]\label{Wal}
A Kotzig array of size $j \times k$ exists if and only if
$j>1$ and $j(k - 1)$ is even. 
\end{theorem}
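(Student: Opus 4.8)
The plan is to prove both directions. The forward (necessity) direction is a one-line counting argument, while the converse (sufficiency) is handled by producing two elementary building blocks and stacking them vertically.

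For necessity, I would sum all entries in two ways. Each of the $j$ rows is a permutation of $\{1,\dots,k\}$ and hence sums to $k(k+1)/2$, so the grand total is $jk(k+1)/2$. Since the $k$ columns share a common sum, that common sum must be $j(k+1)/2$. As a column sum is an integer, $j(k+1)$ must be even, and because $j(k+1)$ and $j(k-1)$ differ by $2j$, this is equivalent to $j(k-1)$ being even. The condition $j>1$ is forced in the non-degenerate range $k\ge 2$: a single row ($j=1$) is itself a permutation, so its entries --- which are the column sums --- are pairwise distinct and cannot all coincide.

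For sufficiency, I would isolate two building blocks and stack them (stacking preserves the row-permutation property and adds column sums). The first block is a $2\times k$ array, valid for \emph{every} $k$: put $1,2,\dots,k$ in the top row and $k,k-1,\dots,1$ in the bottom row, so column $i$ sums to $i+(k+1-i)=k+1$. Stacking $m$ copies gives a $2m\times k$ Kotzig array with common column sum $m(k+1)=2m(k+1)/2$, which already settles every even $j$ with arbitrary $k$. It then remains to treat odd $j$; here $j(k-1)$ even forces $k$ odd, and $j>1$ gives $j\ge 3$. The second block is therefore a $3\times k$ array with $k$ odd, and combining one such block with $(j-3)/2$ copies of the $2\times k$ block yields $j$ rows whose columns all sum to $3(k+1)/2+\tfrac{j-3}{2}(k+1)=j(k+1)/2$, as required.

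The crux --- and the step I expect to be the only real obstacle --- is constructing the $3\times k$ block for odd $k=2t+1$, whose target column sum is $3(t+1)$. I would take the first row to be $1,2,\dots,k$, the second row to be the cyclic shift sending column $i$ to the representative of $i+t$ in $\{1,\dots,k\}$, and then \emph{define} the third row forcibly by $C(i)=3(t+1)-i-B(i)$ so that every column hits the target. The first two rows are permutations by construction; the only thing needing verification is that $C$ is a permutation as well. I would check this by splitting the columns at the wraparound point of the shift: for $i\le t+1$ one has $B(i)=i+t$, giving $C(i)=2t+3-2i$, which runs through the odd values $2t+1,2t-1,\dots,1$; for $i\ge t+2$ one has $B(i)=i-t-1$, giving $C(i)=4t+4-2i$, which runs through the even values $2t,2t-2,\dots,2$. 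Together these exhaust $\{1,\dots,k\}$ without repetition, so $C$ is a genuine permutation and the block is a valid Kotzig array, completing the argument.
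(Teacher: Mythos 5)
Your proof is correct and complete: the double-counting argument for necessity, the $2\times k$ reversal block, and the $3\times k$ block for odd $k$ (with the third row defined by forcing the column sum and verified to be a permutation by the odd/even split at the wraparound) all check out. Note that the paper itself gives no proof of this theorem --- it is quoted as a known result of Marr and Wallis --- so there is nothing to compare against in the source; your stacking construction is in any case the standard one for this result, and the only caveat is the degenerate case $k=1$ (where a $1\times 1$ array trivially exists despite $j=1$), which you correctly set aside by restricting to $k\ge 2$.
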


  In~\cite{CicZ}, the first author introduced a generalization of Kotzig arrays and gave necessary and sufficient conditions for their existence. Namely, for an Abelian group $\Gamma$ of order $k$ we define a $\Gamma$-\textit{Kotzig array}.
  
\begin{definition}\label{def: Gamma-KA}
	A \emph{$\Gamma$-Kotzig array} $\gka(j,k)$ where $\Gamma$ is an Abelian group of order $k$  is
	a $j\times k$ grid where each row is a permutation of elements of $\Gamma$ and each column has the same sum. 
\end{definition}
  
  By adding to all elements in each row the inverse of its first entry we get all  elements in the first column equal  $0$ and therefore  all  column sums will equal to $0$.

  Recall that by $\mathcal{G}$ we denote the set of all finite Abelian groups that are either of odd order, or contain more than one involution.

\begin{theorem}[Cichacz \cite{CicZ}]\label{thm:Kotzig}
A $\Gamma$-Kotzig array of size $j \times k$ exists if and only if
$j>1$ and $j$ is even or $\Gamma\in \gr$. Moreover, whenever a $\Gamma$-Kotzig array exists, then there exists one with the column sums equal to $0$.
\end{theorem}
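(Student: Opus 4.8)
The plan is to prove both directions at once while always normalizing to column sums equal to $0$, which simultaneously delivers the ``moreover'' clause. For necessity, first I would dispose of $j=1$: a single row is a permutation of $\Gamma$, so its entries, which are exactly the column sums, are pairwise distinct, impossible once $|\Gamma|>1$. For the parity condition I would double count the total sum $T$ of all entries. Summing by rows gives $T=jS$, where $S=\sum_{g\in\Gamma}g$; summing by columns gives $T=kc$, where $c$ is the common column sum. By Lagrange's theorem $kc=|\Gamma|c=0$, hence $jS=0$. Now recall the classical fact (obtained by pairing each $g$ with $-g$, so that only the self-inverse elements $0$ and the involutions survive) that $S=0$ when $\Gamma\in\gr$, while $S=\iota$, the unique involution, when $\Gamma\notin\gr$. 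Thus if $\Gamma\notin\gr$ we get $j\iota=0$, and since $\iota$ has order $2$ this forces $j$ to be even. This is exactly the claimed necessary condition $j>1$ and ($j$ even or $\Gamma\in\gr$).

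For sufficiency I would construct arrays with all column sums $0$. The basic block, valid for \emph{every} $\Gamma$, is the $2\times k$ array whose $\ell$-th column is $(g_\ell,-g_\ell)$ for a fixed enumeration $g_1,\dots,g_k$ of $\Gamma$; negation is a bijection, so both rows are permutations of $\Gamma$ and each column sums to $0$. Stacking $j/2$ such blocks settles every even $j>1$ for arbitrary $\Gamma$. It then remains to treat odd $j\ge 3$ with $\Gamma\in\gr$; since $j-3$ is even, it suffices to produce one $3\times k$ array and append even blocks. For the $3\times k$ block I would use an orthomorphism (complete mapping) $\theta$ of $\Gamma$, i.e. a bijection $\theta\colon\Gamma\to\Gamma$ such that $g\mapsto g+\theta(g)$ is also a bijection: indexing columns by $g\in\Gamma$ and taking the three rows $g$, $\theta(g)$, and $-(g+\theta(g))$ yields three permutations of $\Gamma$ whose columns sum to $0$.

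Everything therefore reduces to producing an orthomorphism of each $\Gamma\in\gr$. For odd order one takes $\theta=\mathrm{id}$, since $g\mapsto 2g$ is then a bijection. In general I would split $\Gamma=\Gamma_{\mathrm{odd}}\oplus\Gamma_2$ into its odd part and its Sylow $2$-subgroup $\Gamma_2$, which is trivial or non-cyclic precisely because $\Gamma\in\gr$, and use a product construction directly on the arrays: given a $3\times k_1$ array for $\Gamma_1$ and a $3\times k_2$ array for $\Gamma_2$ (both with zero column sums), placing the pair $(a_{i,\ell},b_{i,m})$ in row $i$, column $(\ell,m)$ gives a $3\times k_1k_2$ array for $\Gamma_1\oplus\Gamma_2$ in which each row is a permutation of the direct sum and each column still sums to $0$. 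The hard part will be the remaining base case of a non-cyclic abelian $2$-group: here no one-line orthomorphism is available, and I would either exhibit explicit orthomorphisms for the building blocks (starting from $Z_2\oplus Z_2$, and handling $Z_{2^a}\oplus Z_2$ recursively), or simply invoke the classical theorem of Paige that an abelian group admits a complete mapping exactly when its Sylow $2$-subgroup is trivial or non-cyclic, i.e. exactly when $\Gamma\in\gr$.

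Finally, the ``moreover'' statement comes for free: every array built above already has zero column sums, and in general, given any $\gka(j,k)$ with common column sum $c$, adding to each row the inverse of its first entry keeps each row a permutation of $\Gamma$ and changes every column sum to $c-c=0$.
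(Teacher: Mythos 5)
Your proof is correct, but note that the paper does not actually prove this statement: Theorem~\ref{thm:Kotzig} is imported from \cite{CicZ} as a black box, and the only argument the paper supplies is the normalization remark preceding it (translate each row by the inverse of its first entry), which coincides with your final paragraph --- and your justification is even slightly more careful, since you check that \emph{every} column sum becomes $c-c=0$, not just the first. Your necessity argument (double counting the total sum, $kc=|\Gamma|c=0$ by Lagrange, and the classical evaluation of $\sum_{g\in\Gamma}g$ as $0$ or the unique involution) is the standard one and is sound. On the sufficiency side, your reduction to a $2\times k$ negation block plus, for odd $j$, a single $3\times k$ block built from a complete mapping is clean; the only place where you lean on an external result is the existence of a complete mapping for $\Gamma\in\gr$, which is exactly the abelian case of the Hall--Paige/Paige theorem (Sylow $2$-subgroup trivial or non-cyclic), so the logical dependence is legitimate and the equivalence of that condition with membership in $\gr$ is exactly right. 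One cosmetic caveat: you use ``orthomorphism'' and ``complete mapping'' interchangeably; for your three rows $g$, $\theta(g)$, $-(g+\theta(g))$ you need $g\mapsto g+\theta(g)$ to be a bijection, i.e.\ a complete mapping in the usual sense, which is what Paige's theorem provides, so no harm is done. In short: the paper offers no proof to compare against, and yours is a correct, essentially self-contained substitute.
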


This theorem will be one of our main building stones for $\Gamma$-magis squares of even side.


\section{Preliminary results}\label{sec:prelim}

In this section, we prove a series of observations and lemmas that will then allow us to construct $\Gamma$-magic squares $\gms(n)$ for all {$n>2$} and all Abelian groups $\Gamma$ of order $n^2$.

We start with a useful lemma.

\begin{lemma}\label{lem:SWL}
	Let $\Gamma$ be an Abelian group of order $n^2$. 
	Let  $\Gamma\cong \Gamma_0\oplus H$ for some groups $\Gamma_0$ with $|\Gamma_0|=m^2$, $m>1$ and $h$ with $|H|=k^2$. If there exists a $\Gamma_0$-magic square   MS$_{\Gamma_0}(m)$   with the  magic sum $\delta$ and {an $H$-Kotzig array of size $m\times k^2$}, then there exists a $\Gamma$-magic square MS$_{\Gamma}(n)$ with the magic sum $(k\delta,0)$.
\end{lemma}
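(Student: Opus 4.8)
The plan is to realise $\gms(n)$ with $n=mk$ as an $m\times m$ array of $k\times k$ blocks, and to fill the two coordinates of $\Gamma\cong\Gamma_0\oplus H$ separately. The $\Gamma_0$-coordinate comes from the given magic square $\mathrm{MS}_{\Gamma_0}(m)=(a_{i,j})$, and the $H$-coordinate from the given $H$-Kotzig array, which by Theorem~\ref{thm:Kotzig} may be taken with all column sums equal to $0$. Writing its entries as $\kappa_{d,\ell}$ with $d\in\{1,\dots,m\}$, $\ell\in\{1,\dots,k^2\}$, each row lists all of $H$ and $\sum_{d=1}^m\kappa_{d,\ell}=0$ for every $\ell$. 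To the cell in block $(i,j)$ at inner position $(r,c)$ I would assign the element $\big(a_{i,j},\ \kappa_{g(i,j),\,\pi(r,c)}\big)$, where $\pi\colon\{1,\dots,k\}^2\to\{1,\dots,k^2\}$ is a fixed bijection and $g$ is a Latin square of order $m$ on the symbols $\{1,\dots,m\}$, to be specified.

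First I would verify this is a bijection onto $\Gamma$. In block $(i,j)$ the first coordinate is the constant $a_{i,j}$, while $(r,c)\mapsto\pi(r,c)$ runs over all of $\{1,\dots,k^2\}$, so the second coordinate runs over the whole of row $g(i,j)$ of the Kotzig array, i.e.\ over all of $H$; since the $a_{i,j}$ exhaust $\Gamma_0$, every element of $\Gamma_0\oplus H$ occurs exactly once. For the $\Gamma_0$-coordinate, each line of the big square (a row, a column, or a diagonal) meets the blocks along the corresponding line of $\mathrm{MS}_{\Gamma_0}(m)$, picking up each of those $m$ block-entries exactly $k$ times; hence every such sum equals $k$ times the corresponding sum of $\mathrm{MS}_{\Gamma_0}(m)$, namely $k\delta$. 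This already yields the first coordinate of the target magic sum and uses precisely the row, column, and both diagonal properties of the given magic square.

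The heart of the argument is the $H$-coordinate, where every line-sum must be $0$; the point of the Latin-square index $g$ is to let a single Kotzig row vary along a line so that the column-sum-zero property can be invoked. For a fixed big row $(i,r)$ the $H$-sum is $\sum_{j=1}^m\sum_{c=1}^k \kappa_{g(i,j),\pi(r,c)}=\sum_{c=1}^k\big(\sum_{j=1}^m \kappa_{g(i,j),\pi(r,c)}\big)$; since $j\mapsto g(i,j)$ is a permutation of $\{1,\dots,m\}$, the inner sum is a full Kotzig column sum and vanishes, so the whole sum is $0$. The same computation with the roles of $i$ and $j$ exchanged, using that $i\mapsto g(i,j)$ is a permutation, shows every big column sums to $0$.

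The remaining obstacle, which I expect to be the hardest step, is the two diagonals of the $H$-coordinate. The main diagonal consists of the inner-diagonal cells of the blocks $(i,i)$, so its $H$-sum is $\sum_{r=1}^k\sum_{i=1}^m \kappa_{g(i,i),\pi(r,r)}$, which vanishes provided $i\mapsto g(i,i)$ is a permutation, i.e.\ the main diagonal of $g$ is a transversal; the back diagonal vanishes provided the back diagonal of $g$ is a transversal. Thus the construction succeeds as soon as $g$ can be taken to be a \emph{doubly diagonal} Latin square of order $m$, which exists for every $m\neq 2,3$; this covers all $m\ge 4$ (and the degenerate case $k=1$, where the $H$-coordinate is identically $0$ and no condition on $g$ is needed). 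The genuinely delicate case is $m=3$ with $k\ge2$ (e.g.\ the order-$36$ groups with $\Gamma_0$ of order $9$), since no order-$3$ Latin square has both diagonals transversal: the cyclic square keeps one diagonal transversal but forces the other to be constant. Here I would try to repair the bad diagonal by a dedicated choice of $\pi$ on the anti-diagonal positions, and if necessary by translating whole Kotzig rows (which preserves the column-sum-zero property whenever the translating constants sum to $0$), handling these finitely many group-types by an explicit argument. Confirming that such a repair is always available is the main loose end of the plan.
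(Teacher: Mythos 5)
Your decomposition is the transpose of the paper's (an $m\times m$ array of $k\times k$ blocks, with the $\Gamma_0$-square spread over blocks and one Kotzig \emph{row} filling each block), and for $m\ge 4$ the argument is complete and clean: the row/column/diagonal sums in the $H$-coordinate reduce to Kotzig column sums precisely because the relevant line of your auxiliary Latin square $g$ is a transversal. But this leaves a genuine gap exactly where the lemma is needed most. The statement assumes only $m>1$, and your method requires a doubly diagonal Latin square of order $m$, which does not exist for $m=2$ or $m=3$. You flag $m=3$ but leave the ``repair'' (re-routing $\pi$ on the anti-diagonal and translating Kotzig rows) entirely unexecuted --- and it is not a routine fix: with the cyclic $g$ the bad diagonal is constant, so its $H$-sum is $3\sum_{r}\kappa_{1,\pi(r,k+1-r)}$, and making this vanish requires exhibiting a $k$-element subset of $H$ with a prescribed sum inside an arbitrary abelian group of order $k^2$, an additional zero-sum argument you have not supplied. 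The case $m=2$ (allowed by the hypotheses) is not addressed at all, and there both diagonals of $g$ are forced to be constant. The $m=3$ case cannot be dismissed as peripheral: the paper invokes the lemma with $m=3$ repeatedly, e.g.\ to build $\gms(6)$ and $\gms(10)$ in Observation~\ref{obs:base-non-cyclic} and throughout Theorem~\ref{thm:main-odd}, so as written your proof does not establish the lemma in the generality in which it is used.

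For contrast, the paper's proof uses the dual block structure: a $k\times k$ array of $m\times m$ blocks, each block carrying a full copy of the $\Gamma_0$-magic square in the first coordinate plus a circulant ``residual'' $H$-square built from a single Kotzig \emph{column} in the second. There every block already has all row and column sums equal to $(\delta,0)$, and the diagonal defect is confined to a single constant per block; it is then repaired not by a doubly diagonal Latin square but by reflecting the blocks destined for the back diagonal and placing complementary blocks on the two main diagonals. That mechanism works uniformly for all $m>1$, which is exactly what your construction is missing. If you want to salvage your approach, you must either supply the $m=2,3$ repairs in full or restrict the lemma to $m\ge4$ and handle the small-$m$ applications separately.
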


\begin{proof}

Denote by $y_{i,j}$ the element in the $i$-th row and $j$-th column of the   MS$_{\Gamma_0}(m)$. Denote by $h_{i,s}$ the $i$-th row and $s$-th column of the $H$-Kotzig array.  Recall that all column sums are $\sum_{i=1}^mh_{i,s}=0$. Moreover, one can easily see that if $m$ is even, then there exists such an $H$-Kotzig array that  $h_{2j,s}=-h_{2j-1,s}=-h_{1,s}$ for $j=1,2,\ldots,m/2$.

Using the Kotzig array we build $k^2$ different $m\times m$ $H$-\textit{residual squares} $R_H^s(m)$ with entries $r^s_{i,j}\in H$
for $1 \leq s \leq k^2$.  In the first column of a given $R_H^s(m)$, we place the $s$-th column of $H$-Kotzig array. That is, $r^s_{i,1} =h_{i,s}$.

In the following $m-1$ columns, we place a circulant array constructed from the first column.   Namely, set 
$$r^s_{i,j}=r^s_{(i+j-1)(\mathrm{mod}\, m),1}, \;\; \text{for} \;\; j =2,3,\ldots,m.$$

Note that  $r^s_{i,j}\neq r^{s'}_{i,j}$ for any $s\neq s'$. Observe that  the sum of every  column is  $\sum_{j=1}^mr^s_{i,j}=\sum_{j=1}^mh_{j,s}=0$ and also the sum of each row is $\sum_{i=1}^mr^s_{i,j}=\sum_{j=1}^mh_{j,s}=0$.

\vskip10pt
\noindent
\textit{\underline{Case 1}} \ \ {$m$ is odd.}
\begin{adjustwidth}{25pt}{0pt}

We will now construct $k^2$ squares $Z^1,\ldots,Z^{k^2}$ of size $m\times m$. Denote by $z^s_{i,j}$ the entry in the $i$-th row and $j$-th column of the $s$-th square $X^s$. Recall that $\Gamma\cong \Gamma_0\oplus H$, therefore any $g\in \Gamma$ we can identify with a pair $(a,b)$ such that $a\in\Gamma_0$ and $b\in H$. 
Let
	$$z_{i,j}^s= (y_{i,j},r^s_{i,j})\,\,\mathrm{for}\,\,s=1,2,\ldots,k^2.$$
Note that the sum on every  column is
  $$
  	\sum_{j=1}^mz^s_{i,j}
  		=\left(\sum_{j=1}^my_{i,j},\sum_{j=1}^mr^s_{i,j}\right)
  		=(\delta,0),
  $$ 
  the sum of each row is 
  $$
  	\sum_{i=1}^mz^s_{i,j}
  		=\left(\sum_{i=1}^my_{i,j},\sum_{i=1}^mr^s_{i,j}\right)
  		=(\delta,0),
  $$  
  the diagonal 
  $$
	\sum_{i=1}^mz^s_{i,i}
		=\left(\sum_{i=1}^my_{i,i},\sum_{i=1}^mr^s_{i,i}\right)
		=\left(\delta,\sum_{j=1}^mh_{j,s}\right)=(\delta,0),
  $$ 
  and on the backward diagonal is $$\sum_{i=1}^mz^s_{m+1-i,i}
  	=\left(\sum_{i=1}^my_{m+1-i,i},\sum_{i=1}^mr^s_{m+1-i,i}\right)
  	=\left(\delta,m h_{m,s}\right).$$ 

Suppose that $z_{i,j}^s= z_{i',j'}^{s'}$.  Then $(y_{i,j},r^s_{i,j})=(y_{i',j'},r^{s'}_{i',j'})$,  but this is impossible by the definition of residual rectangles for $s\neq s'$.   Thus,  every element of $\Gamma$ appears exactly once in  $Z^1,\ldots,Z^{k^2}$.

We will construct now $k^2$ squares $X^1,\ldots,X^{k^2}$ of size $m\times m$. We will glue together those $k^2$ squares to obtain a MS$_\Gamma(n)$.  Denote by $x^s_{i,j}$ the entry in the $i$-th row and $j$-th column of the $s$-th square $X^s$.  Let
	$$x_{i,j}^s=
	\begin{cases}
		z_{i,j}^s&\mathrm{for}\;\;s=1,2,\ldots,k^2-k,\\
		z^s_{i,m+1-j}&\mathrm{for }\;\;s=k^2-k+1,k^2-k+2,\ldots,k^2\\
	\end{cases}$$
	All  column, row and backward diagonal sums in $X^s$ are equal to a constant $(\delta,0)$, and on the main diagonal to $(\delta,mh_{s,m})$ for $s=1,2,\ldots,k^2-k$, whereas for $s=k^2-k+1,k^2-k+2,\ldots,k^2$  backward diagonal sums equal to a constant
	$$
		\sum_{i=1}^mx^s_{m+1-i,i}=\sum_{i=1}^mz^s_{m+1-i,m+1-i}=(\delta,0),
	$$ 
	and the main diagonal sums are 
	$$
		\sum_{i=1}^mx^s_{i,i}=\sum_{i=1}^mz^s_{i,m+1-i}=(\delta,mh_{s,m}).
	$$ 
Moreover, without loss of generality we can assume that, if $k$ is odd, then $h_{k^2-(k-1)/2,m}=0$ (thus both diagonals in $X^{k^2-(k-1)/2}$ equal to $(\delta,0)$).

	We will glue now the rectangles $X^1,X^2,\ldots,X^{k^2}$ together in such a way that $X^{k^2-k+1},$ $X^{k^2-k+2},\ldots,X^{k^2}$ are on  the backward diagonal.

	Therefore, all  column, row, diagonal and backward diagonal sums of MS$_\Gamma(n)$ are equal to the constant $(k\delta,0)$, 
\end{adjustwidth}

\noindent
\textit{\underline{Case 2}} \ \  $m$ is even.

\begin{adjustwidth}{25pt}{0pt}

	Since we assumed that in this case the  $H$-Kotzig array has the property that $h_{2j,s}=-h_{2j-1,s}=-h_{1,s}$ for $j=1,2,\ldots,m$,  the sum on the main diagonal it is $\sum_{i=1}^mr^s_{i,i}=m  h_{1,s}$, whereas on the backward diagonal is $\sum_{i=1}^mr^s_{n+1-i,i}=m h_{m,s}=-m h_{1,s}$.

	We will construct now $k^2$ squares $X^1,\ldots,X^{k^2}$ of size $m\times m$. We will glue together those $k^2$ squares to obtain a MS$_\Gamma(n)$
	Denote by $x^s_{i,j}$ the entry in the $i$-th row and $j$-th column of the $s$-th square $X^s$. Recall that $\Gamma\cong \Gamma_0\oplus H$, therefore any $g\in \Gamma$ we can identify with a pair $(a,b)$ such that $a\in\Gamma_0$ and $b\in H$. 

	Let
	$$
		x_{i,j}^s=(y_{i,j},r^s_{i,j})\;\;\mathrm{for}\;\;s=1,2,\ldots,k^2.
	$$
	All  column and row sums in $X^s$ equal to a constant $(\delta,0)$,  the sum on the diagonal is $(\delta,m h_{1,s})$, on the backward diagonal is $(\delta,-m h_{1,s})$. Without loss of generality, we can assume that $h_{1,1}=0\in H$. 

	Let $X^s$ be such a square that $m  h_{1,s}\neq 0\in H$, we call such a square \emph{non-zero}. Note that if $X^s$ is a non-zero square,  then since $m$ is even there exists $s'\neq s$ such that $X^{s'}$ is non-zero and the sum on the diagonal of  $X^{s'}$ is $(\delta,m h_{1,s'})=(\delta,-m h_{1,s})$. We will call the rectangles $X^s$ and $X^{s'}$ \textit{complementary}. We will glue now the rectangles $X^1,X^2,\ldots,X^{k^2}$ in such a way that if $k$ is odd, then $X^1$ is in the center (i.e. this is this square which has elements on both the main forward and backward diagonal). Additionally, if there $X^s$ on main diagonal, its complementary square $X^{s'}$ is on the same diagonal.

The remaining part of the proof is similar to Case 1.
\end{adjustwidth}
\end{proof}

\begin{exm}\label{exm: Z_6 x Z_6}
	In Figure~\ref{fig: Z_6 x Z_6} we show the construction of $\ms{Z_{3}\oplus Z_3\oplus  Z_2\oplus  Z_2}(6)$, when a  $\ms{ Z_{3}\oplus Z_3}(3)$ and a $Z_{2}\oplus Z_2$-Kotzig array $K$ of size $3\times 4$ are given.
\end{exm}

\begin{figure}
\begin{subfigure}[t]{0.5\linewidth}
$$
\begin{array}{|c|c|c|}
\hline
	(0,0)& (0,1)& (0,2)\\\hline
	(1,0)& (1,1)& (1,2)\\\hline
	(2,0)& (2,1)& (2,2)\\\hline
\end{array}
$$
\caption{$\ms{Z_3\oplus Z_3}(3)$}
\end{subfigure}
\hfill
\begin{subfigure}[t]{0.5\linewidth}
$$
\begin{array}{|c|c|c|c|c|c|}
\hline
	(0,0)& (0,1) & (1,0) & (1,1)\\\hline
	(0,0)& (1,0) & (1,1) & (0,1)\\\hline
	(0,0)& (1,1) & (0,1) & (1,0)\\\hline
\end{array}
$$
\caption{$3\times 4$ Kotzig array}
\end{subfigure}
\begin{subfigure}[h]{1\textwidth}
$$
\begin{array}{|c|c|c|c|c|c|}
	\hline
	(0,0,0,0)& (0,1,0,0)& (0,2,0,0)& (0,2,0,1)& (0,1,1,1)& (0,0,1,0)\\\hline
	(1,0,0,0)& (1,1,0,0)& (1,2,0,0)& (1,2,1,0)& (1,1,0,1)& (1,0,1,1)\\\hline
	(2,0,0,0)& (2,1,0,0)& (2,2,0,0)& (2,2,1,1)& (2,1,1,0)& (2,0,0,1)\\\hline
	(0,2,1,0)& (0,1,0,1)& (0,0,1,1)& (0,0,0,1)& (0,1,1,0)& (0,2,1,1)\\\hline
	(1,2,1,1)& (1,1,1,0)& (1,0,0,1)& (1,0,1,0)& (1,1,1,1)& (1,2,0,1)\\\hline
	(2,2,0,1)& (2,1,1,1)& (2,0,1,0)& (2,0,1,1)& (2,1,0,1)& (2,2,1,0)\\\hline
\end{array}
$$
\end{subfigure}
\caption{$\ms{ Z_{3}\oplus Z_3\oplus  Z_2\oplus  Z_2}(6)$}
\label{fig: Z_6 x Z_6}
\end{figure}

The following observation will be needed as a starting case in a later lemma.

\begin{obs}\label{obs:general H+A}
	Let $\Gamma=H\oplus K$  where $|H|=k^2m^2$ and  $|K|=k^2$. If there exists an $\ms{H}(km)$, then there exists an $\gms(k^2m)$.
\end{obs}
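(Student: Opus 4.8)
The plan is to realize Observation~\ref{obs:general H+A} as an immediate instance of Lemma~\ref{lem:SWL}, so that the only substantive work is checking the existence of the Kotzig array that the lemma demands. Concretely, I would apply Lemma~\ref{lem:SWL} with its $\Gamma_0$ taken to be $H$ and its Kotzig-array group taken to be $K$. Since $|H|=k^2m^2=(km)^2$ and $|K|=k^2$, the lemma's side parameter is $km$ and its square-root-of-order parameter is $k$; the resulting magic square has side $k^2m$, which is consistent because $|\Gamma|=|H|\,|K|=(km)^2k^2=(k^2m)^2$.

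The hypothesis already supplies an $\ms{H}(km)$, say with magic sum $\delta$, so the single ingredient left to produce is a $K$-Kotzig array of size $(km)\times k^2$ (recall $k^2=|K|$ is the forced width). The key — and essentially only — step is to confirm via Theorem~\ref{thm:Kotzig} that this array always exists. That theorem requires $km>1$ together with the condition that $km$ be even or $K\in\gr$. The crucial observation is that this disjunction can never fail: if $km$ is odd then $k$ is odd, hence $|K|=k^2$ is odd, so $K$ has odd order and therefore $K\in\gr$ by definition of $\gr$. Thus, provided $km>1$, the required $K$-Kotzig array exists, and by the same theorem it may be taken with all column sums equal to $0$.

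Finally I would feed the given $\ms{H}(km)$ and this $K$-Kotzig array into Lemma~\ref{lem:SWL} to obtain the desired $\gms(k^2m)$, whose magic sum comes out as $(k\delta,0)$; the degenerate case $km=1$ makes $\Gamma$ trivial and needs no argument. I anticipate no genuine obstacle here: the entire content is the parity remark that odd $km$ automatically forces $K$ into $\gr$, which is exactly what makes the existence condition of Theorem~\ref{thm:Kotzig} hold unconditionally in this setting.
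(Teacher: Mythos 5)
Your proposal is correct and follows essentially the same route as the paper: both reduce the observation to Lemma~\ref{lem:SWL} and verify the required $K$-Kotzig array of size $km\times k^2$ via Theorem~\ref{thm:Kotzig}, using the parity observation that $km$ odd forces $k$ odd, hence $|K|=k^2$ odd and $K\in\gr$. The only cosmetic difference is that the paper splits on the parity of $k$ while you split on the parity of $km$; the substance is identical.
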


\begin{proof}
When $k>1$ is odd, there exists a {$kn\times k^2$}  $K$-Kotzig array for any $n$. When $k$ is even, then {$kn$}  is even and there again exists a {$kn\times k^2$}  $K$-Kotzig array. 
Therefore, there exists a $K$-Kotzig array of size $km\times |K|$ by Theorem~\ref{thm:Kotzig}.
Hence, we are done by  Lemma~\ref{lem:SWL}.
\end{proof}


When $\Gamma$ has no subgroups $Z_{p^{2\alpha}}$, we need a different reduction technique. Our Lemma~\ref{lem: Z_p^c + Z_p^c+d} below provides one. We start with some simple results that will be used as building blocks in the lemma.

\begin{obs}\label{obs:base-non-cyclic}
	Let $n\geq3$ and $\Gamma=Z_n\oplus Z_n$. Then there exists a $\Gamma$-magic square $\gms(n)$ of side $n$.
\end{obs}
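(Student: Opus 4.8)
The plan is to construct an explicit $\Gamma$-magic square for $\Gamma = Z_n \oplus Z_n$ by building two doubly diagonal mutually orthogonal Latin squares and superimposing them, so that each cell of the resulting $n \times n$ array carries a pair $(a_{i,j}, b_{i,j}) \in Z_n \oplus Z_n$. The orthogonality condition from Definition~\ref{def:MOLS} guarantees that the pairs range over every element of $Z_n \oplus Z_n$ exactly once, so the array uses each group element once. The Latin property makes every row and every column contain each element of $Z_n$ exactly once in each coordinate; hence every row sum and every column sum, computed coordinatewise, equals $\left(\sum_{t=0}^{n-1} t, \sum_{t=0}^{n-1} t\right) = \left(\tfrac{n(n-1)}{2}, \tfrac{n(n-1)}{2}\right)$ modulo $n$, a single constant $\mu$. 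The ``doubly diagonal'' requirement then forces the same behaviour on the two main diagonals, giving the diagonal sums the value $\mu$ as well.

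First I would invoke Theorem~\ref{thm:diag-MOLS}: a pair of doubly diagonal mutually orthogonal Latin squares of side $n$ exists for all $n \neq 2, 3, 6$ (and possibly with $10$ as the lone genuinely open case, which the remark following that theorem resolves). For those values of $n$ the superposition described above immediately yields $\ms{Z_n \oplus Z_n}(n)$, and the magic sum is exactly $\mu = \left(\tfrac{n(n-1)}{2}, \tfrac{n(n-1)}{2}\right)$. This covers all $n \geq 3$ except the small exceptional sides $n = 3$ and $n = 6$, where the DDMOLS machinery fails.

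Next I would dispatch the two leftover cases separately. For $n = 3$ I would simply cite the already-available Theorem~\ref{thm: Z_n x Z_n odd}, which gives a $Z_3 \oplus Z_3$-magic square of side $3$ directly (indeed a concrete $3 \times 3$ array appears in Figure~\ref{fig: Z_6 x Z_6}(a) of the excerpt). For $n = 6$, where neither Theorem~\ref{thm:diag-MOLS} nor the odd-order result applies, I would exhibit an explicit $Z_6 \oplus Z_6$-magic square of side $6$; alternatively, since $6$ is even, one can try to assemble it from smaller pieces, but the cleanest route is a direct construction that can be checked by inspection, analogous to how Figure~\ref{fig: Z_6 x Z_6} assembles the order-six example for a different group.

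The main obstacle I anticipate is the case $n = 6$: it is exactly the side for which no pair of (even ordinary) MOLS exists by Theorem~\ref{thm:MOLS}, so the orthogonal-superposition strategy is unavailable and there is no odd-order shortcut either. I expect to have to produce and verify a bespoke $6 \times 6$ array over $Z_6 \oplus Z_6$ by hand (or note that a suitable ad hoc diagonal construction works), whereas every other value of $n$ falls out uniformly from the DDMOLS theorem. A secondary subtlety worth flagging is the genuinely unresolved status of side $10$ in Theorem~\ref{thm:diag-MOLS}; I would rely on the cited MathSciNet correction in the remark to treat $n = 10$ as covered, or otherwise supply a separate construction for it so that the proof does not hinge on an open problem.
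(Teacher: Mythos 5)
Your main line of argument --- superimposing a pair of doubly diagonal MOLS and reading off the coordinatewise row, column, and diagonal sums --- is exactly the paper's argument for all $n\neq 3,6,10$, and your treatment of $n=3$ via Theorem~\ref{thm: Z_n x Z_n odd} also matches. The gap is in the two cases you yourself flag as obstacles, $n=6$ and $n=10$: you do not actually resolve either one. For $n=6$ you promise a bespoke $6\times 6$ array over $Z_6\oplus Z_6$ ``checked by inspection'' but never produce it, and for $n=10$ you propose either to lean on the existence of DDMOLS of side $10$ --- which the paper explicitly records as possibly open, so it cannot carry the proof --- or to ``supply a separate construction'' that is likewise absent. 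A hand-built $10\times 10$ array over $Z_{10}\oplus Z_{10}$ is not something one can wave at.

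The paper closes both exceptional cases with one uniform trick that is missing from your proposal: since $Z_{2n}\oplus Z_{2n}\cong Z_n\oplus Z_n\oplus Z_2\oplus Z_2$, one takes the square $\ms{Z_n\oplus Z_n}(n)$ for $n=3,5$ (Theorem~\ref{thm: Z_n x Z_n odd}), a $Z_2\oplus Z_2$-Kotzig array of size $n\times 4$ (Theorem~\ref{thm:Kotzig}), and feeds them into the composition Lemma~\ref{lem:SWL} to obtain $\gms(6)$ and $\gms(10)$. Adding this step would complete your proof; without it, sides $6$ and $10$ remain unproved, and your fallback for side $10$ rests on an open question.
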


\begin{proof}
	For $n\neq3, 6, 10$, the proof follows from Theorem~\ref{thm:diag-MOLS} on the existence of DDMOLS of side $n$. An example for $n=3$ is given in Figure~\ref{fig: Z_6 x Z_6}. 
   
	Because for $n=3,5$ there exists a $\ms{Z_n\oplus Z_n}(n)$

	{by Theorem~\ref{thm: Z_n x Z_n odd}} and there also exists a $Z_2\oplus Z_2$-Kotzig array of size $n\times 4$   {by Theorem~\ref{thm:Kotzig}}, we can apply Lemma~\ref{lem:SWL} to prove the existence of $\gms(6)$ and $\gms(10)$ because  $Z_{2n}\oplus Z_{2n}\cong Z_n\oplus Z_n\oplus Z_2\oplus Z_2$.
\end{proof}
 

 \begin{obs}\label{obs: Z_2 + Z_2^c}
 	Let $\Gamma= Z_{2}\oplus Z_{2^{2\alpha-1}}$ and $\alpha\geq2$. Then there exists an $\gms(2^\alpha)$.
 \end{obs}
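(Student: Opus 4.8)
The plan is to build $\gms(2^\alpha)$ by a direct construction, because the product machinery of Lemma~\ref{lem:SWL} and Observation~\ref{obs:general H+A} cannot be applied to $\Gamma=Z_2\oplus Z_{2^{2\alpha-1}}$. Indeed, by uniqueness of the elementary divisors, any decomposition $\Gamma\cong\Gamma_0\oplus H$ into nontrivial subgroups has factors isomorphic to $Z_2$ and $Z_{2^{2\alpha-1}}$, and neither $2$ nor $2^{2\alpha-1}$ is a perfect square; so there is no summand of the square order those statements require. Instead I would read the two coordinates of $\Gamma$ off a single integer magic square obtained from a doubly diagonal pair of MOLS.

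First, set $n=2^\alpha$ and $q=2^{2\alpha-1}=n^2/2$, so $\Gamma=Z_2\oplus Z_q$ and $|\Gamma|=2q=n^2$. Since $\alpha\ge2$ we have $n\in\{4,8,16,\dots\}$, hence $n\notin\{2,3,6,10\}$, and Theorem~\ref{thm:diag-MOLS} supplies a pair of doubly diagonal mutually orthogonal Latin squares $\{a_{i,j}\}$ and $\{b_{i,j}\}$ of side $n$ with symbols in $\{0,1,\dots,n-1\}$. Form the integer array $M_{i,j}=n\,a_{i,j}+b_{i,j}$. Orthogonality makes $M$ a bijection onto $\{0,1,\dots,n^2-1\}$, and since both Latin squares are doubly diagonal, every row, every column, and both main diagonals of $M$ meet each symbol exactly once in each coordinate; hence each such line of $M$ has the same integer sum $C=(n+1)n(n-1)/2$, i.e. $M$ is a genuine magic square.

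Next I would read each entry through the bijection $\phi\colon\{0,\dots,2q-1\}\to Z_2\oplus Z_q$ given by $\phi(x)=(\lfloor x/q\rfloor,\ x\bmod q)$, and fill the grid with $\phi(M_{i,j})$; this is a bijective $\Gamma$-filling. Line sums are coordinatewise. For the $Z_q$-coordinate, summing the identity $M_c=q\lfloor M_c/q\rfloor+(M_c\bmod q)$ over a line $L$ gives $C=q\sum_{c\in L}\lfloor M_c/q\rfloor+\sum_{c\in L}(M_c\bmod q)$, so $\sum_{c\in L}(M_c\bmod q)\equiv C\pmod q$, independent of $L$. For the $Z_2$-coordinate, $\lfloor M_c/q\rfloor=1$ exactly when $M_c\ge q$, i.e. exactly when $a_{i,j}\ge n/2$; as $\{a_{i,j}\}$ is a doubly diagonal Latin square each line contains each symbol once, so it contains exactly $n/2$ symbols that are $\ge n/2$. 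Thus $\sum_{c\in L}\lfloor M_c/q\rfloor=n/2=2^{\alpha-1}$, which is even because $\alpha\ge2$, so the $Z_2$-coordinate of every line sum is $0$. Every line therefore sums to the single element $\mu=(0,\ C\bmod q)\in\Gamma$, and the filling is a $\gms(2^\alpha)$.

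I expect the delicate point to be the $Z_2$-coordinate, not the $Z_q$-one: the second coordinate is automatically constant for any integer magic square, whereas keeping the first coordinate constant forces every line — including both diagonals — to contain the same parity of upper-half entries. This is exactly what the doubly diagonal property secures (it pins the count to $n/2$ on each line), and the hypothesis $\alpha\ge2$ is precisely what makes $n/2$ even; for $\alpha=1$ both ingredients fail, matching the nonexistence of $\gms(2)$. The only external input is Theorem~\ref{thm:diag-MOLS}, and since $2^\alpha$ is never the doubtful value $10$, its use here is unconditional.
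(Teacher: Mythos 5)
Your argument is correct, and I checked the delicate points: $M_{i,j}=n\,a_{i,j}+b_{i,j}$ is indeed a bijective magic square with line sum $C=n(n^2-1)/2$; the equivalence $M_{i,j}\geq q \iff a_{i,j}\geq n/2$ holds because $0\leq b_{i,j}\leq n-1$; each line of the doubly diagonal square $a$ contains exactly $n/2$ symbols in $\{n/2,\dots,n-1\}$, and $n/2=2^{\alpha-1}$ is even precisely when $\alpha\geq2$. (As a sanity check, your magic constant $(0,\,C\bmod q)=(0,-2^{\alpha-1})$ agrees with the one the paper obtains.) However, your route is genuinely different from the paper's. The paper gives a fully explicit, self-contained construction: it writes down the first row $(0,j)$ with a few transpositions in its right half, generates the second row as $(0,2^{\alpha+1}-1)$ minus the first, propagates downward by adding $(0,2^{\alpha+1})$ and then $(1,0)$, and verifies all row, column, and diagonal sums by direct computation. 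You instead read the two group coordinates off a single integer magic square built from a pair of doubly diagonal MOLS via the base-$q$ digit map $x\mapsto(\lfloor x/q\rfloor,\,x\bmod q)$, letting the magic property handle the $Z_q$-coordinate and the doubly diagonal Latin property handle the $Z_2$-coordinate. Your proof is shorter and conceptually cleaner, and it generalizes beyond powers of two (the same argument produces a $Z_2\oplus Z_{n^2/2}$-magic square for any $n\equiv0\pmod4$ admitting DDMOLS); the trade-off is that it imports Theorem~\ref{thm:diag-MOLS} as an external ingredient, whereas the paper's construction for Observation~\ref{obs: Z_2 + Z_2^c} is elementary and independent of the DDMOLS literature — though the paper already relies on that theorem elsewhere, so this is not a new dependency in practice.
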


 \begin{proof} Denote by $a_{i,j}$ the $i$-th row and $j$-th column of the   $\gms(2^\alpha)$. For computational convenience, we number the rows and columns  $0,1,\dots,2^\alpha -1$.
 We provide direct construction. We set

 	\begin{align*}
 		a_{0,j} &=
 	\begin{cases}
 	 	(0,j) \ \ \ \ \ \: \,  \text{for} \ j=0,1,\dots,2^{\alpha-1}-1\\
 	 	(0,j+1)  \: \,  \text{for} \ j=2^{\alpha-1},2^{\alpha-1}+2,\dots,2^{\alpha}-2,\\
 	 	(0,j-1) \ \, \text{for} \ j=2^{\alpha-1}+1,2^{\alpha-1}+3,\dots,2^{\alpha}-1,
 	\end{cases}
 	\\
 		a_{1,j} &= (0,2^{\alpha+1}-1) - a_{0,j},\\
 		a_{i,j} &= (0,2^{\alpha+1}) + a_{i-2,j}  \ \text{for} \ i=2,3,\dots\,2^{\alpha-1}-1, \ \text{and}\\
 		a_{i,j} &= a_{i-2^{\alpha-1},j} + (1,0)  \ \: \text{for} \ i=2^{\alpha-1},2^{\alpha-1}+1,\dots,2^{\alpha}-1.\\
 	\end{align*}

	Later we will need some of the above in terms of explicit formulas. In particular, for every even $i=0,2,\dots,2^{\alpha}-2$ we have
	$$
		a_{i,j} + a_{i+1,j} = a_{i,j} + ((0,2^{\alpha+1}-1) -a_{i,j} ) 
		= (0,2^{\alpha+1}-1).
	$$
	Similarly, for $i=0,2,\dots,2^{\alpha-1}-2$, 
	\begin{align*}
		a_{i,i} + a_{i+1,i+1} 
		&= a_{i,i} + ((0,2^{\alpha+1}-1) -a_{i,i+1} )\\ 
		&= {(0,i) + (0,2^{\alpha+1}-1) - (0,i+1)}\\
		&= (0,2^{\alpha+1}-2),
	\end{align*}
	and for $i=2^{\alpha-1},2^{\alpha-1}+2,\dots,2^{\alpha}-2$, 

	\begin{align*}
		a_{i,i} + a_{i+1,i+1} 
		&= a_{i,i} + ((0,2^{\alpha+1}-1) -a_{i,i+1} )\\ 
		&= (1,i+1) + (0,2^{\alpha+1}-1) - (1,(i+1)-1)\\
		&= (0,2^{\alpha+1}).
	\end{align*}
	Hence, for $i=0,2,\dots,2^{\alpha-1}-2$ we obtain
	\begin{align*}
		a_{i,i} + a_{i+1,i+1} + a_{2^{\alpha-1}+i,2^{\alpha-1}+i} + a_{2^{\alpha-1}+i+1,2^{\alpha-1}+i+1}
		&= (0,2^{\alpha+1}-2) + (0,2^{\alpha+1})\\
		&=(0,2^{\alpha+2}-2).		
	\end{align*}
	Finally, for $i=0,2,\dots,2^{\alpha-1}-2$, 
	\begin{align*}
		a_{i,2^{\alpha}-i-1} + a_{i+1,2^{\alpha}-i-2} 
		&= a_{i,2^{\alpha}-i-1}+((0,2^{\alpha+1}-1)-a_{i,2^{\alpha}-i-2})\\ 
		&= (0,2^{\alpha}-i-2) + (0,2^{\alpha+1}-1) - (0,2^{\alpha}-i-1)\\
		&= (0,2^{\alpha+1}-2),
	\end{align*}	
	and for $i=2^{\alpha-1},2^{\alpha-1}+2,\dots,2^{\alpha}-2$, 

	\begin{align*}
		a_{i,2^{\alpha}-i-1} + a_{i+1,2^{\alpha}-i-2} 
		&= a_{i,2^{\alpha}-i-1}+((0,2^{\alpha+1}-1)-a_{i,2^{\alpha}-i-2})\\ 
		&= (0,2^{\alpha}-i-1) + (0,2^{\alpha+1}-1) - (0,2^{\alpha}-i-2)\\
		&= (0,2^{\alpha+1}).
	\end{align*}
	Hence, for $i=0,2,\dots,2^{\alpha-1}-2$ we obtain
	\begin{align*}
		  a_{i,2^{\alpha}-i-1} 
		+ a_{i+1,2^{\alpha}-i-2} 
		+ a_{2^{\alpha}+i,2^{\alpha}-i-1} 
		+ a_{2^{\alpha}+i+1,2^{\alpha}-i-2} 
		&= (0,2^{\alpha+1}-2) + (0,2^{\alpha+1})\\
		&=(0,2^{\alpha+2}-2).		
	\end{align*}
 
	We will denote by 
	$\rho_i=(\rho_i^{(1)},\rho_i^{(2)})$ the sum of all entries in row $i$, 
	$\sigma_j=(\sigma_j^{(1)},\sigma_j^{(2)})$ the sum in column $j$, 
	$\delta=(\delta^{(1)},\delta^{(2)})$ the sum in the main forward diagonal, and 
	$\gamma=(\gamma^{(1)},\gamma^{(2)})$ the sum in the main backward  diagonal. We want to show that the row, column, and diagonal sums are all equal to the same magic constant $\mu=(\mu^{(1)},\mu^{(2)})$. To simplify our calculations, we observe that because $\alpha\geq2$, every row, column, and diagonal contains an even number of elements with the first entry equal to 1. Therefore, we have 
	$\rho^{(1)}_i=\sigma^{(1)}_j=\delta^{(1)}=\gamma^{(1)}=0$ for every $i,j$ and we only need to check the values of
	$\rho^{(2)}_i,\sigma^{(2)}_j,\delta^{(2)},\gamma^{(2)}$.

	The zero row contains elements $(0,0),(0,1),\dots,(0,2^\alpha -1)$, therefore we have
	\begin{align*}
 		\rho^{(2)}_0
 		&=\sum_{j=0}^{2^\alpha -1} j \\
 		&= (2^\alpha -1)2^\alpha/2 \\
 		&= (2^\alpha -1)2^{\alpha-1} \\
 		&=2^{2\alpha -1} -2^{\alpha-1} \\
 		&=-2^{\alpha-1}
	\end{align*}
	because the addition is performed in $Z_{2^{2\alpha-1}}$,
	and

	\begin{align*}
 		\rho^{(2)}_1
 		&=\sum_{j=0}^{2^\alpha -1}  (2^{\alpha+1}-1) - j \\
 		&= 2^{\alpha}(2^{\alpha+1}-1) - \rho^{(2)}_0\\
 		&= 2^{2\alpha+1}-2^{\alpha} + 2^{\alpha-1}\\
 		&= -2^{\alpha-1}(2 -1)\\
 		&= -2^{\alpha-1}
	\end{align*}
	as well.
	Now, because we have $a_{i,j} = (0,2^{\alpha+1}) + a_{i-2,j}$ for $i=2,3,\dots\,2^{\alpha-1}-1$,
	\begin{align*}
 		\rho^{(2)}_i
 		&=2^{\alpha}2^{\alpha+1} + \rho^{(2)}_{i-2} \\
 		&=\rho^{(2)}_{i-2}.
 	\end{align*}
	Thus, $\rho^{(2)}_i=-2^{\alpha-1}$ for $i=0,1,\dots,2^{\alpha}-1$.
	
	Now, because 
	$a_{i,j} = a_{i-2^{\alpha-1},j} + (1,0)$ for $i=2^{\alpha-1},2^{\alpha-1}+1,\dots,2^{\alpha}-1$, it should be obvious that $\rho^{(2)}_i=-2^{\alpha-1}$ and $\rho_i=(0,-2^{\alpha-1})$ for all $i$.
	
	For the columns sum, we recall that for every even $i$ and every $j$, we have
	$$
		a_{i,j} + a_{i+1,j} = (0,2^{\alpha+1}-1).
	$$
	Therefore,
	\begin{align*}
		\sigma_j
		&=\sum_{s=0}^{2^{\alpha-1}-1} a_{2s,j} + a_{2s+1,j}\\
		&=\sum_{s=0}^{2^{\alpha-1}-1} (0,2^{\alpha+1}-1)\\
		&= 2^{\alpha-1} (0,2^{\alpha+1}-1) \\
		&= (0,2^{2\alpha}-2^{\alpha-1}) \\
		&= (0,-2^{\alpha-1})
	\end{align*}
	because the calculation is performed in $Z_2\oplus Z_{2^{2\alpha-1}}$.

	Remembering that for even $i=0,2,\dots,2^{\alpha-1}-2$ we proved that 	
	\begin{align*}
		a_{i,i} + a_{i+1,i+1} + a_{2^{\alpha-1}+i,2^{\alpha-1}+i} + a_{2^{\alpha-1}+i+1,2^{\alpha-1}+i+1}
		&=(0,2^{\alpha+2}-2),		
	\end{align*}
	we have
	\begin{align*}
		\delta
		&= \sum_{s=0}^{2^{\alpha-2}-1}
		a_{2s,2s} + a_{2s+1,2s+1} 
			+ a_{2^{\alpha-1}+2s,2^{\alpha-1}+2s} 
			+ a_{2^{\alpha-1}+2s+1,2^{\alpha-1}+2s+1}\\
		&= \sum_{s=0}^{2^{\alpha-2}-1} (0,2^{\alpha+2}-2)\\
		&=2^{\alpha-2} (0,2^{\alpha+2}-2)\\
		&= (0,2^{2\alpha}-2^{\alpha-1})\\
		&= (0,-2^{\alpha-1}).
	\end{align*}
	Similarly, using the previously proved identity
 	$$
	  a_{i,2^{\alpha}-i-1} 
			+ a_{i+1,2^{\alpha}-i-2} 
			+ a_{2^{\alpha}+i,2^{\alpha}-i-1} 
			+ a_{2^{\alpha}+i+1,2^{\alpha}-i-2} 
			=(0,2^{\alpha+2}-2)
 	$$
	we obtain
	\begin{align*}
		\eta
		&= \sum_{s=0}^{2^{\alpha-2}-1}
	  a_{i,2^{\alpha}-i-1} 
			+ a_{i+1,2^{\alpha}-i-2} 
			+ a_{2^{\alpha}+i,2^{\alpha}-i-1} 
			+ a_{2^{\alpha}+i+1,2^{\alpha}-i-2} \\
		&= \sum_{s=0}^{2^{\alpha-2}-1} (0,2^{\alpha+2}-2)\\
		&= (0,-2^{\alpha-1}).
	\end{align*}
	Therefore, we have shown that	$\rho_i=\sigma_j=\delta=\gamma=(0,-2^{\alpha-1})$ for every $i,j$. This means that the square $\gms(2^\alpha)$ is $\Gamma$-magic with the magic constant $\mu=(0,-2^{\alpha-1})$, which completes the proof.
 \end{proof}

 	An example of $\gms(8)$ for $\Gamma=Z_{2}\oplus Z_{32}$, that is, $\alpha=3$, is given in Figure~\ref{fig:example Z_2 x Z_32}.

 \begin{figure}[H]
 \begin{center}
 	\begin{tabular}{|c|c|c|c|c|c|c|c|}
 		\hline
 		(0,0) &(0,1)  &(0,2)  &(0,3)  &(0,5)  &(0,4)  &(0,7)  &(0,6)\\ \hline
 		(0,15) &(0,14) &(0,13) &(0,12) &(0,10) &(0,11) &(0,8)  &(0,9)\\ \hline
 		(0,16) &(0,17) &(0,18) &(0,19) &(0,21) &(0,20) &(0,23) &(0,22)\\ \hline
 		(0,31) &(0,30) &(0,29) &(0,28) &(0,26) &(0,27) &(0,24) &(0,25)\\ \hline
 		(1,0) &(1,1)  &(1,2)  &(1,3)  &(1,5)  &(1,4)  &(1,7)  &(1,6)\\ \hline
 		(1,15) &(1,14) &(1,13) &(1,12) &(1,10) &(1,11) &(1,8)  &(1,9)\\ \hline
 		(1,16) &(1,17) &(1,18) &(1,19) &(1,21) &(1,20) &(1,23) &(1,22)\\ \hline
 		(1,31) &(1,30) &(1,29) &(1,28) &(1,26) &(1,27) &(1,24) &(1,25)\\ \hline
 	\end{tabular}
 \end{center}
 	\caption{$\ms{Z_{2}\oplus Z_{32}}(8)$}	
 	\label{fig:example Z_2 x Z_32}
 \end{figure}

 
 \vskip10pt 
 
 \begin{figure}[H]
 \begin{center}
 \begin{tabular}{|c|c|c|c|}
 \hline
     (0,0)  & (0,1) & (0,3) & (0,2)\\ \hline
     (0,7)  & (0,6) & (0,4) & (0,5)\\ \hline
     (1,0)  & (1,1) & (1,3) & (1,2)\\ \hline
     (1,7)  & (1,6) & (1,4) & (1,5)\\ \hline
   \end{tabular}
 \caption{$\ms{Z_2\oplus Z_8}(4)$}
 \label{fig: Z_2 x Z_8}
 \end{center}
 \end{figure}

  \begin{lemma}\label{lem:Z_2^2d+1 x Z_2^2d+3}
 	Let $\delta\geq0$ and\/ $\Gamma=Z_{2^{2\delta+1}}\oplus Z_{2^{2\delta+3}}$. Then there exists a $\Gamma$-magic square $\gms({2^{2\delta+2}})$.
 \end{lemma}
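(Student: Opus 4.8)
The plan is to argue by induction on $\delta$, building the side-$2^{2\delta+2}$ square for $\Gamma=\Gamma_\delta:=Z_{2^{2\delta+1}}\oplus Z_{2^{2\delta+3}}$ out of the side-$2^{2\delta}$ square for the ``shifted-down'' group $\Gamma_{\delta-1}=Z_{2^{2\delta-1}}\oplus Z_{2^{2\delta+1}}$. The base case $\delta=0$ is exactly $Z_2\oplus Z_8$ of side $4$, which is furnished by Observation~\ref{obs: Z_2 + Z_2^c} (with $\alpha=2$) and displayed in Figure~\ref{fig: Z_2 x Z_8}. The feature that makes the induction natural is that $4\Gamma_\delta\cong\Gamma_{\delta-1}$ while $\Gamma_\delta/4\Gamma_\delta\cong Z_4\oplus Z_4$; so, although this extension does \emph{not} split (hence Lemma~\ref{lem:SWL} does not apply verbatim), I can still glue a scaled copy of the smaller magic square together with a $Z_4\oplus Z_4$ ``coset layer.''

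Concretely, for $\delta\ge1$ let $M'=\mathrm{MS}_{\Gamma_{\delta-1}}(2^{2\delta})$ be the magic square given by the induction hypothesis, with magic sum $\mu'$, and let $\iota\colon\Gamma_{\delta-1}\to\Gamma_\delta$, $\iota(x',y')=(4x',4y')$, be the injective multiplication-by-$4$ homomorphism onto $4\Gamma_\delta$. I would tile the $N\times N$ grid, $N=2^{2\delta+2}$, as a $4\times4$ array of $n'\times n'$ blocks with $n'=2^{2\delta}$, and set the entry in block $(U,V)$ at local position $(i',j')$ to be $a_{i,j}=\iota(M'_{i',j'})+(A_{U,V},B_{U,V})$, where $(A,B)$ is a pair of doubly diagonal mutually orthogonal Latin squares of order $4$ with entries read as the integers $0,1,2,3$; such a pair exists by Theorem~\ref{thm:diag-MOLS}. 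Orthogonality of $(A,B)$ together with the bijectivity of $M'$ makes $a$ a bijection onto $\Gamma_\delta$ via the mixed-radix identity $(x,y)=(4\tilde x+x_0,\,4\tilde y+y_0)$, with $(x_0,y_0)$ locating the block and $(\tilde x,\tilde y)$ the local position.

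For the four line sums I would compute with $\iota$ as a homomorphism and only open up into integer coordinates for the coset layer. Since each row, column and both diagonals of $M'$ sum to $\mu'$, the ``high part'' contributes $4\,\iota(\mu')$ to every line. Because $A$ and $B$ are \emph{doubly diagonal} Latin squares on $\{0,1,2,3\}$, every row, every column and \emph{both} diagonals of each of them sum to exactly $6$ as integers; hence in each line the ``low part'' contributes $n'\cdot(6,6)$, and using $6\cdot2^{2\delta}=3\cdot2^{2\delta+1}$ one checks that this equals $(0,\,3\cdot2^{2\delta+1})$ in $\Gamma_\delta$ — the same value for rows, columns, and the main and backward diagonals. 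Thus every line sums to $4\,\iota(\mu')+(0,\,3\cdot2^{2\delta+1})$, and the square is $\Gamma_\delta$-magic.

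The delicate point — and the reason the argument needs \emph{doubly diagonal} MOLS rather than an arbitrary $Z_4\oplus Z_4$-magic square — is the carry produced when the coset layer is scaled by $n'=2^{2\delta}$. In the first coordinate the potential carry is harmless, since $4n'=2^{2\delta+2}\equiv0\pmod{2^{2\delta+1}}$ kills it outright. In the second coordinate, however, $2^{2\delta+2}\not\equiv0\pmod{2^{2\delta+3}}$, so a line-dependent carry would survive unless the integer sum of the $B$-entries along every row, column and both diagonals is literally the \emph{same} integer (here $6$), not merely congruent modulo $4$. This is exactly what the Latin and doubly diagonal properties guarantee, and it is the step I expect to require the most care to state cleanly; everything else is the routine bijection and sum bookkeeping modeled on the gluing in Lemma~\ref{lem:SWL}.
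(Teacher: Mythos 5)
Your proof is correct, but it follows a genuinely different route from the paper's. The paper does not induct on $\delta$: it treats $\delta=0$ by an explicit figure, $\delta=1$ by a special $2\times2$ tiling of the $Z_2\oplus Z_{32}$ square (with $Z_2$ re-embedded as $\langle4\rangle\le Z_8$ and the four blocks shifted by $0,1,2,3$ in the first coordinate), and $\delta\ge2$ by a Kronecker-type product of the square $\ms{Z_{2}\oplus Z_{2^{2\delta+3}}}(2^{\delta+2})$ supplied by Observation~\ref{obs: Z_2 + Z_2^c} with an \emph{integer} magic square of side $2^{\delta}$, whose entries are added to the first coordinate to fill out the index-$2^{2\delta}$ gap in $Z_{2^{2\delta+1}}$; there the role your doubly diagonal MOLS play is played by the fact that the integer magic square's diagonals sum to exactly $\mu_A$ as integers. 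Your induction instead grows the side by a factor of $4$ at each step, identifying $4\Gamma_\delta\cong\Gamma_{\delta-1}$ and distributing the sixteen cosets of $4\Gamma_\delta$ over a $4\times4$ block pattern via DDMOLS of order $4$ (Theorem~\ref{thm:diag-MOLS}); your observation that the extension does not split, so Lemma~\ref{lem:SWL} cannot be invoked directly, is accurate, and your identification of the carry issue in the second coordinate --- requiring the line sums of the order-$4$ Latin squares to equal $6$ as integers, not merely modulo $4$ --- is exactly the right point to isolate and is correctly settled by the Latin and doubly diagonal properties. What your approach buys is uniformity: a single inductive step handles every $\delta\ge1$ and needs Observation~\ref{obs: Z_2 + Z_2^c} only once for the base case, whereas the paper must split off $\delta=1$ separately precisely because no integer magic square of side $2$ exists; what the paper's approach buys is that it avoids any quotient/coset bookkeeping and reuses machinery (integer magic squares, the Observation for all $\alpha$) already in place.
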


 \begin{proof}
 	For $\delta=0$, an example is shown in Figure~\ref{fig: Z_2 x Z_8}. For $\delta=1$, we present a direct construction. We first modify the $Z_2\oplus Z_{32}$-magic square shown in Figure~\ref{fig:example Z_2 x Z_32} by replacing $Z_2$ with the subgroup $\langle4\rangle$ of $Z_8$. We call this {residual} square 
 	$A=\{a_{i,j}\}_{i,j=1}^{16}$ and observe that is has the magic constant $\mu_A=(0,30)$. We denote the copies by $A_{s,t}=\{a_{i,j}^{s,t}\}, \ s,t\in\{1,2\}$ and place them in a $16\times16$-array $B$. Then we modify $B$ by adding the element 
 	$(0,0)\in Z_8\oplus Z_{32}$ to every entry in $A_{1,1}$, {$(1,0)$} to every entry in $A_{1,2}$, 
{$(2,0)$} to every entry in $A_{2,1}$, and  {$(3,0)$} to every entry in $A_{2,2}$.  We call these modified subsquares $B_{s,t}$.
 	
 	Now we check the partial magic constants in each subsquare $B_{s,t}$. Let $(g,0)\in Z_8\oplus Z_{32}$ be the element added to all entries of $A_{s,t}$. Then every row, column, and diagonal sum is equal to
 	$$
 		\mu_{B_{s,t}} =\mu_A + 8(g,0) =\mu_A + (8g,0)=\mu_A + (0,0)=\mu_A,
 	$$
 	because in $Z_8$ we have $8g=0$ for every $g$. It should be obvious now that $\mu_B=2\mu_A=(0,28)$.
 	
 	For $\delta\geq2$, we use Kronecker-type construction, using 

 	$\ms{Z_{2}\oplus Z_{2^{2\delta+3}}}(2^{\delta+2})$ 
 	and $\ms{}(2^{\delta})$ (with entries $0,1,\dots,2^{2\delta}-1)$.

  	Let $A=\{a_{i,j}\}_{i,j=1}^{2^\delta}$ be an 
  	$\ms{}(2^{\delta})$ (in integers) with magic constant $\mu_A$ and $B=\{b_{s,t}\}_{s,t=1}^{2^{\delta+2}}$ a 
  	${Z_{2}\oplus Z_{2^{2\delta+3}}}$-magic square with  entries 
 	$b_{s,t}=(b^{(1)}_{s,t},b^{(2)}_{s,t})$ for $1\leq s,t \leq 2^{\delta+2}$ and   magic constant $\mu_B=(\mu^{(1)}_{B},\mu^{(2)}_{B})$. Recall that the subgroup
 	$\langle 2^{2\delta}\rangle$ of $Z_{2^{2\delta+1}}$ is isomorphic to $Z_2$.
 	
 	We create $2^{2\delta}$ squares $B^{i,j}$ isomorphic to $B$ for $i,j=1,2,\dots,2^\delta$ defined as
 	$$
 		b^{i,j}_{s,t}=(2^{2\delta}b^{(1)}_{s,t},b^{(2)}_{s,t}), \ 
 		i,j=1,2,\dots,2^\delta, \ s,t=1,2,\dots2^{\delta+2}.
 	$$
 	and place them in an $2^{2\delta+2}\times 2^{2\delta+2}$ array $C'$ in a natural way.
 	One can observe that each such square $B^{i,j}$ is a 
 	$\langle {2^{2\delta}}\rangle \oplus  Z_{2^{2\delta+3}}$-magic square.
 
 	Now we create a $2^{2\delta+2}\times2^{2\delta+2}$ square $C$ by adding entry 
 	$(a_{i,j},0)$ to every element $b^{i,j}_{s,t}, \ s,t=1,2,\dots,2^{\delta+2}$. That is,
 	$$
 	c^{i,j}_{s,t}=(2^{2\delta}b^{(1)}_{s,t} + a_{i,j},b^{(2)}_{s,t}) \ 
 		i,j=1,2,\dots,2^\delta, \ s,t=1,2,\dots2^{\delta+2}.	
 	$$
 	Each $2^{\delta+2}\times2^{\delta+2}$ subsquare $C^{i,j}$ of $C$ now has each row, column, and both diagonal sums equal to
 	$$
 		\mu_{C^{i,j}}=(2^{2\delta}\mu^{(1)}_{B} + 2^{\delta}a_{i,j},\mu^{(2)}_{B}).
 	$$
 	Therefore, the row sum $\rho_u$ where $u=2^{\delta}(i-1)+s$ in $C$ is 
 	\begin{align*}
 	\rho_u 	&= \sum_{j=1}^{2^{\delta}}\mu_{C^{i,j}}\\
 			&= \sum_{j=1}^{2^{\delta}}
 				(2^{2\delta}\mu^{(1)}_{B} + 2^{\delta}a_{i,j},\mu^{(2)}_{B})\\
 			&= \sum_{j=1}^{2^{\delta}}
 				(2^{2\delta}\mu^{(1)}_{B},\mu^{(2)}_{B})+(2^{\delta}a_{i,j},0)\\
 			&= 2^{\delta}(2^{2\delta}\mu^{(1)}_{B},\mu^{(2)}_{B})
 			 + 2^{\delta}\sum_{j=1}^{2^{\delta}}(a_{i,j},0)\\
 			&= 2^{\delta}(2^{2\delta}\mu^{(1)}_{B},\mu^{(2)}_{B})
 			 + 2^{\delta}(\mu_A,0)\\
 			&= 2^{\delta}(2^{2\delta}\mu^{(1)}_{B}+\mu_A,\mu^{(2)}_{B}).
 	\end{align*}
 	The other constants for columns and diagonals can be checked essentially the same way. Therefore, the lemma holds.	
 \end{proof}


\begin{lemma}\label{lem:Z_k^2m x Z_m}
	Let $k,m\geq3$ and $\Gamma=Z_{k^2m}\oplus Z_m$. Then there exists a $\Gamma$-magic square $\gms(km)$.
\end{lemma}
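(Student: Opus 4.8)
The plan is to build the square of side $km$ by a Kronecker-type composition of two smaller magic squares, in the same spirit as the $\delta\geq2$ construction in Lemma~\ref{lem:Z_2^2d+1 x Z_2^2d+3}. The two ingredients are an ordinary integer magic square $A=(a_{p,q})$ of side $k$ with entries $0,1,\dots,k^2-1$, which exists by Theorem~\ref{thm:msquare} since $k\geq3$, and a $Z_m\oplus Z_m$-magic square $B=(b_{s,t})$ of side $m$, which exists by Observation~\ref{obs:base-non-cyclic} since $m\geq3$. The key idea is to fill the cyclic factor $Z_{k^2m}$ by means of the set-wise mixed-radix identification $x\mapsto(x\bmod k^2,\ \lfloor x/k^2\rfloor)$ between $Z_{k^2m}$ and $\{0,\dots,k^2-1\}\times Z_m$: the ``low'' block $\{0,\dots,k^2-1\}$ is supplied by $A$, while the ``high'' block $\langle k^2\rangle\cong Z_m$ together with the second factor $Z_m$ is supplied by $B$.

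Concretely, I would index the rows and columns of the target square by pairs $(p,s)$ and $(q,t)$ with $p,q\in\{0,\dots,k-1\}$ and $s,t\in\{0,\dots,m-1\}$, arranged \emph{outer-major}, so that the cell in row $pm+s$ and column $qm+t$ lies in outer block $(p,q)$ at inner position $(s,t)$. Writing $b_{s,t}=(b^{(1)}_{s,t},b^{(2)}_{s,t})\in Z_m\oplus Z_m$, I define the entry to be $\bigl(a_{p,q}+k^2 b^{(1)}_{s,t},\ b^{(2)}_{s,t}\bigr)\in Z_{k^2m}\oplus Z_m$. First I would check that this is a bijection onto $\Gamma$: the number of cells is $(km)^2=|\Gamma|$, and injectivity follows because $a_{p,q}+k^2 b^{(1)}_{s,t}$ is a unique mixed-radix expansion with $a_{p,q}\in\{0,\dots,k^2-1\}$ and $b^{(1)}_{s,t}\in\{0,\dots,m-1\}$, so equality of two entries forces $a_{p,q}=a_{p',q'}$ and $b_{s,t}=b_{s',t'}$, whence $(p,q)=(p',q')$ and $(s,t)=(s',t')$ since $A$ and $B$ are themselves bijective arrays.

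It then remains to verify that every row, column and both diagonals have one common sum. For a fixed row $(p,s)$, summing the first coordinate over all columns $(q,t)$ gives $m\sum_q a_{p,q}+k^3\sum_t b^{(1)}_{s,t}=m\mu_A+k^3\beta_1$ in $Z_{k^2m}$, where $\mu_A$ is the magic constant of $A$ and $(\beta_1,\beta_2)$ is the magic constant of $B$, while the second coordinate sums to $k\beta_2$; both are independent of the chosen row, and the computation for columns is identical. With the outer-major layout, the main diagonal of the big square meets exactly the diagonal blocks $(p,p)$ of $A$ at their diagonal cells $(s,s)$, and the backward diagonal meets the backward-diagonal blocks of $A$ at their backward-diagonal cells; hence both diagonal sums reduce to the diagonal sums of $A$ and of $B$ and again equal $(m\mu_A+k^3\beta_1,\ k\beta_2)$.

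The step I expect to require the most care is the diagonal condition: it is the only place where I genuinely need $A$ and $B$ to be full magic squares (with both diagonals magic) rather than semi-magic squares or magic rectangles, and it is exactly what forces the outer-major block arrangement so that the two families of diagonals align. Everything else is bookkeeping modulo $k^2m$, the one subtlety being that $k^2\beta_1$ is well defined in $Z_{k^2m}$ because $k^2m\equiv0$, so the lift of $\beta_1\in Z_m$ is immaterial. Finally, the hypotheses $k,m\geq3$ are precisely what guarantee that the two ingredient squares exist; the excluded boundary values $m=2$ and $k=2$ are exactly the cases handled by the earlier special constructions, where no $Z_2\oplus Z_2$-square of side $2$, respectively no integer magic square of side $2$, is available.
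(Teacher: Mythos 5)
Your construction is correct and is essentially the same Kronecker-type argument as the paper's: the same two ingredients (an integer magic square of side $k$ and a $Z_m\oplus Z_m$-magic square of side $m$, justified by the same results) combined via the same entry formula $\bigl(k^2 b^{(1)}+a,\ b^{(2)}\bigr)$, with identical row, column, and diagonal sums $(m\mu_A+k^3\beta_1,\ k\beta_2)$. The only difference is cosmetic: you make $A$ the outer block structure and $B$ the inner one, while the paper writes $u=q_1k+r_1$ so that $B$ is outer and $A$ is inner; the two layouts differ by a simultaneous row/column permutation that preserves both diagonals, so the proofs coincide.
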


\begin{proof}
%
%

	For convenience, we will number the rows and columns from 0 to $km-1$.
	It follows from Theorem~\ref{thm:msquare} and Observation~\ref{obs:base-non-cyclic} that there exists a 
	magic square $A=\{a_{i,j}\}_{i,j=0}^{k-1}$ (in integers) with magic constant $\mu_A$ and a $Z_m\oplus Z_m$-magic square $B$ with  entries 
	$b_{s,t}=(b^{(1)}_{s,t},b^{(2)}_{s,t})$ for $0\leq s,t \leq m-1$ and   magic constant $\mu_B=(\mu^{(1)}_{B},\mu^{(2)}_{B})$. 
	
	We use a Kronecker-type construction to obtain a $\Gamma$-magic square $C$.
	
	We express the coordinates in $C$ as 
		$u = q_1 k + r_1$ and  $v = q_2 k + r_2$ where $0\leq q_1,q_2\leq m-1$ and $0\leq r_1,r_2\leq k-1$. Then we define $c_{u,v}$ as 
		$$
			c_{u,v} = (k^2 b^{(1)}_{q_1,q_2} + a_{r_1,r_2},b^{(2)}_{q_1,q_2}).
		$$

	We now want to check the row, column, and diagonal sums. For row $u$, we have
	\begin{align*}
	\rho_u 	&= \sum_{v=0}^{km-1}c_{u,v}\\
			&= \sum_{v=0}^{km-1}(k^2 b^{(1)}_{q_1,q_2} 
				+a_{r_1,r_2},b^{(2)}_{q_1,q_2})\\
			&= \sum_{q_2=0}^{m-1}\sum_{r_2=0}^{k-1}(k^2 b^{(1)}_{q_1,q_2} 
							+a_{r_1,r_2},b^{(2)}_{q_1,q_2})\\
			&= \sum_{q_2=0}^{m-1}(k^3 b^{(1)}_{q_1,q_2} 
							+\mu_A,kb^{(2)}_{q_1,q_2})\\
			&= (k^3\mu^{(1)}_B + m\mu_A,k\mu^{(2)}_B).
	\end{align*}
	Similarly, for column $v$ we have
	\begin{align*}
	\sigma_v&= \sum_{u=0}^{km-1}c_{u,v}\\
			&= \sum_{q_1=0}^{m-1}\sum_{r_1=0}^{k-1}(k^2 b^{(1)}_{q_1,q_2} 
							+a_{r_1,r_2},b^{(2)}_{q_1,q_2})\\
			&= \sum_{q_1=0}^{m-1}(k^3 b^{(1)}_{q_1,q_2} 
							+\mu_A,kb^{(2)}_{q_1,q_2})\\
			&= (k^3\mu^{(1)}_B + m\mu_A,k\mu^{(2)}_B).
	\end{align*}
	For the main diagonal, 
	\begin{align*}
	\delta	&= \sum_{u=0}^{km-1}c_{u,u}\\
			&= \sum_{q_1=0}^{m-1}\sum_{r_1=0}^{k-1}(k^2 b^{(1)}_{q_1,q_1} 
							+a_{r_1,r_1},b^{(2)}_{q_1,q_1})\\
			&= \sum_{q_1=0}^{m-1}(k^3 b^{(1)}_{q_1,q_2} 
							+\mu_A,kb^{(2)}_{q_1,q_2})\\
			&= (k^3\mu^{(1)}_B + m\mu_A,k\mu^{(2)}_B).
	\end{align*}
	and for the backward diagonal we have
	\begin{align*}
	\gamma	&= \sum_{u=0}^{km-1}c_{u,km-1-u}\\
			&= \sum_{q_1=0}^{m-1}\sum_{r_1=0}^{k-1}(k^2 b^{(1)}_{q_1,m-1-q_1}
							+a_{r_1,k-1-r_1},b^{(2)}_{q_1,m-1-q_1})\\
			&= \sum_{q_1=0}^{m-1}(k^3 b^{(1)}_{q_1,m-1-q_1}
							+\mu_A,kb^{(2)}_{q_1,m-1-q_1})\\
			&= (k^3\mu^{(1)}_B + m\mu_A,k\mu^{(2)}_B).
	\end{align*}
	Because all the sums are equal, we have shown that $C$ is a $\Gamma$-magic square with magic constant $\mu=(k^3\mu^{(1)}_B + m\mu_A,k\mu^{(2)}_B)$.	
\end{proof}


\begin{exm}\label{exm: Z_27 x Z_3}
	In Figure~\ref{fig: Z_27 x Z_3} we  show the construction of $\ms{Z_{27}\oplus Z_3}(9)$.
\end{exm}

\begin{figure}[h]
\begin{subfigure}[t]{0.5\linewidth}
$$
A=\begin{array}{|c|c|c|}
\hline
	7& 0& 5\\\hline
	2& 4& 6\\\hline
	3& 8& 1\\\hline
\end{array}
$$
\caption{$\ms{Z_9}(3)$}
\end{subfigure}
\hfill
\begin{subfigure}[t]{0.5\linewidth}
$$
B=\begin{array}{|c|c|c|}
\hline
	(0,0)& (0,1)& (0,2)\\\hline
	(1,0)& (1,1)& (1,2)\\\hline
	(2,0)& (2,1)& (2,2)\\\hline
\end{array}
$$
\caption{$\ms{Z_3\oplus Z_3}(3)$}
\end{subfigure}
\\
\begin{subfigure}[h]{1\textwidth}
$$
\begin{array}{|c|c|c|c|c|c|c|c|c|}
	\hline
	(7,0)& (0,0)& (5,0)& (7,1)& (0,1)& (5,1)&(7,2)& (0,2)& (5,2)\\\hline
	(2,0)& (4,0)& (6,0)& (2,1)& (4,1)& (6,1)&(2,2)& (4,2)& (6,2)\\\hline
	(3,0)& (8,0)& (1,0)& (3,1)& (8,1)& (1,1)&(3,2)& (8,2)& (1,2)\\\hline

	(16,0)& (9,0)& (14,0)& (16,1)& (9,1)& (14,1)&(16,2)& (9,2)& (14,2)\\\hline
	(11,0)& (13,0)& (15,0)& (11,1)& (13,1)& (15,1)&(11,2)& (13,2)& (15,2)\\\hline
	(12,0)& (17,0)& (10,0)& (12,1)& (17,1)& (10,1)&(12,2)& (17,2)& (10,2)\\\hline

	(25,0)& (18,0)& (23,0)& (25,1)& (18,1)& (23,1)&(25,2)& (18,2)& (23,2)\\\hline
	(20,0)& (22,0)& (24,0)& (20,1)& (22,1)& (24,1)&(20,2)& (22,2)& (24,2)\\\hline
	(21,0)& (26,0)& (19,0)& (21,1)& (26,1)& (19,1)&(21,2)& (26,2)& (19,2)\\\hline
\end{array}
$$
\end{subfigure}
\caption{$\ms{ Z_{27}\oplus Z_3(9)}$}
\label{fig: Z_27 x Z_3}
\end{figure}


\begin{obs}\label{obs:Z_2gamma+Z_4}
	Let $\Gamma=Z_{4}\oplus Z_{2^{2\gamma}}$  where $\gamma\geq2$. Then there exists a $\gms(2^{\gamma+1})$.
\end{obs}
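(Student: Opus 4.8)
The plan is to reduce this case to the existence of a cyclic magic square via the packaged reduction in Observation~\ref{obs:general H+A}. First I would record the arithmetic: $|\Gamma| = 4\cdot 2^{2\gamma} = 2^{2\gamma+2} = (2^{\gamma+1})^2$, so the target is a $\Gamma$-magic square of side $2^{\gamma+1}$.

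The key structural point is that $\Gamma = Z_4 \oplus Z_{2^{2\gamma}}$ already splits as $H \oplus K$ with the \emph{large cyclic factor} playing the role of $H$: set $H = Z_{2^{2\gamma}}$ and $K = Z_4$. Then $|K| = 4 = k^2$ forces $k = 2$, and $|H| = 2^{2\gamma} = k^2 m^2$ forces $m = 2^{\gamma-1}$, which is a genuine integer $\geq 2$ precisely because $\gamma \geq 2$. With these choices the hypotheses $|H| = k^2m^2$ and $|K| = k^2$ of Observation~\ref{obs:general H+A} are met, and it remains only to supply the required seed square $\ms{H}(km)$.

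Next I would verify that seed exists. Here $km = 2\cdot 2^{\gamma-1} = 2^\gamma$ and $H = Z_{2^{2\gamma}} = Z_{(2^\gamma)^2}$, so $\ms{H}(km)$ is exactly a $Z_{(2^\gamma)^2}$-magic square of side $2^\gamma$. Since $\gamma \geq 2$ gives $2^\gamma \geq 4 > 2$, Theorem~\ref{thm:cyclic} furnishes such a cyclic magic square. Feeding it into Observation~\ref{obs:general H+A} then yields a $\Gamma$-magic square of side $k^2 m = 4\cdot 2^{\gamma-1} = 2^{\gamma+1}$, exactly the claimed $\gms(2^{\gamma+1})$.

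There is essentially no computational obstacle; the whole content is the correct bookkeeping of which factor becomes $H$ and which becomes $K$. The points that genuinely require $\gamma \geq 2$ are: (i) $m = 2^{\gamma-1}$ must be a positive integer, and (ii) the seed side $2^\gamma$ must exceed $2$ so that Theorem~\ref{thm:cyclic} applies. I would also note, for safety, that the $K$-Kotzig array internal to Observation~\ref{obs:general H+A} exists because $km = 2^\gamma$ is even — this matters since $K = Z_4$ has a unique involution and hence $K \notin \mathcal{G}$, so we cannot invoke the odd-order/multiple-involution branch of Theorem~\ref{thm:Kotzig} and must rely on parity. Finally I would sanity-check the boundary case $\gamma = 2$ (giving $Z_4 \oplus Z_{16}$, side $8$, seed $\ms{Z_{16}}(4)$) to confirm the reduction is non-vacuous.
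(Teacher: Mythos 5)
Your proof is correct, but it takes a genuinely different route from the paper. The paper proves this observation by a direct construction: it places four copies of the cyclic seed square $\ms{Z_{2^{2\gamma}}}(2^{\gamma})$ (supplied by Theorem~\ref{thm:cyclic}) into the four corners of a $2^{\gamma+1}\times 2^{\gamma+1}$ array and prepends the \emph{constant} $Z_4$-coordinates $0,1,2,3$ to the four blocks; since every row, column, and diagonal meets each block in exactly $2^{\gamma}$ cells and $4\mid 2^{\gamma}$ for $\gamma\geq2$, the $Z_4$-coordinate of every line sums to $0$ while the $Z_{2^{2\gamma}}$-coordinate contributes twice the seed's magic constant. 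You instead feed the same seed square into the packaged reduction of Observation~\ref{obs:general H+A}, i.e.\ Lemma~\ref{lem:SWL} together with a $Z_4$-Kotzig array of size $2^{\gamma}\times 4$ from Theorem~\ref{thm:Kotzig}. Your bookkeeping is right: $k=2$, $m=2^{\gamma-1}$, the seed $\ms{Z_{2^{2\gamma}}}(2^{\gamma})$ requires $2^{\gamma}>2$, the output side is $k^2m=2^{\gamma+1}$, and you correctly note that $Z_4\notin\gr$, so only the even-$j$ branch of Theorem~\ref{thm:Kotzig} is available. The trade-off: your argument is shorter and reuses existing machinery, at the cost of importing the more delicate even-seed-side Case~2 of Lemma~\ref{lem:SWL} (complementary-square placement on the diagonals), which the paper's hands-on block construction sidesteps entirely --- there the divisibility $4\mid 2^{\gamma}$ does all the diagonal work without any Kotzig array.
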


\begin{proof}
	Let 
	$A(2^\gamma)=\{a_{i,j}\}^{2^\gamma-1}_{i,j=0}=\ms{Z_{2^{2\gamma}}}(2^{\gamma})$, which exists by Theorem~\ref{thm:cyclic}. Construct {a residual square} $B(2^{\gamma+1})=\{b_{i,j}\}^{2^\gamma}_{i,j=0}=\gms(2^{\gamma+1})$ as follows. First place four copies of $A(2^{\gamma})$ into a square $B'(2^{\gamma+1})$ in the natural way, that is, each $A(2^{\gamma})$ fills one corner of $B'(2^{\gamma+1})$. We denote the copies by $A_{s,t}$ where $s,t\in\{1,2\}$.

	For  $\Gamma=Z_4\oplus Z_{2^{2\gamma}}$ we obtain $\gms(2^{\gamma+1})=B(2^{\gamma+1})$ by adding an element of $Z_4$ as the first entry to each label. 
	That is, we replace each $a_{i,j}$ in a given $A_{s,t}$ by $(b,a_{i,j})$, $b=2s+t-3\in Z_4$, as follows.	
	Simply said, to labels 
	in $A_{1,1}$ we add 0, 
	in $A_{1,2}$ we add 1,
	in $A_{2,1}$ we add 2, and
	in $A_{2,2}$ we add 3.

	Now, when an element of $Z_4$ appears in a row, column, or one of the two diagonals, it always appears there exactly $2^\gamma$ times, precisely in one of the squares $A_{s,t}(2^\gamma)$. Because $\gamma\geq2$, we see that 4 divides $2^\gamma$ and it immediately follows that the new entry of the magic constant will be always 0. 
\end{proof}


\begin{lemma}\label{lem: Z_p^c + Z_p^c+d}

	Let $\Gamma= Z_{p^{\beta}}\oplus Z_{p^{\gamma}}$ where $1\leq\beta\leq\gamma$, $\beta+\gamma=2\alpha$, 
	and $|\Gamma|=p^{2\alpha}>4$. Then there exists an $\gms(p^\alpha)$.
\end{lemma}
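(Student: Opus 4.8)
The plan is to split into cases according to the common parity of $\beta$ and $\gamma$ (these agree, since $\beta+\gamma=2\alpha$ is even) and according to whether $\beta=\gamma$, routing each case to one of the building blocks already established. The guiding principle is that a cyclic $p$-group $Z_{p^e}$ is a perfect-square-order group exactly when $e$ is even, and Lemma~\ref{lem:SWL} can only peel off a factor of square order; this dictates which reduction is available.

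First I would dispose of the diagonal case $\beta=\gamma$. Here $\Gamma=Z_{p^\alpha}\oplus Z_{p^\alpha}$, i.e.\ $\Gamma$ is of the form $Z_n\oplus Z_n$ with $n=p^\alpha$, and since $|\Gamma|=p^{2\alpha}>4$ forces $n\ge 3$ (the only excluded group, $Z_2\oplus Z_2$, has order $4$), Observation~\ref{obs:base-non-cyclic} yields the square at once. Next, suppose $\beta<\gamma$ with both even, say $\beta=2b$, $\gamma=2c$ and $1\le b<c$. Now both cyclic factors have square order, so I would peel off $H=Z_{p^{2b}}$ via Lemma~\ref{lem:SWL}, taking $\Gamma_0=Z_{p^{2c}}$ (so $m=p^c$, $k=p^b$). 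The seed square $\ms{Z_{p^{2c}}}(p^c)$ is cyclic and exists by Theorem~\ref{thm:cyclic}, because $c\ge 2$ gives $p^c\ge 4>2$; the required $H$-Kotzig array of size $p^c\times p^{2b}$ exists by Theorem~\ref{thm:Kotzig}, since its number of rows $p^c$ is even when $p=2$, while $H=Z_{p^{2b}}$ has odd order and so lies in $\gr$ when $p$ is odd. Lemma~\ref{lem:SWL} then delivers $\gms(p^{b+c})=\gms(p^\alpha)$.

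The remaining case is $\beta<\gamma$ with both odd; here neither factor has square order, the peeling argument of Lemma~\ref{lem:SWL} is unavailable, and this is the genuinely delicate regime. When $p$ is odd I would apply Lemma~\ref{lem:Z_k^2m x Z_m} directly: writing $\Gamma=Z_{k^2m}\oplus Z_m$ with $m=p^\beta$ and $k=p^{(\gamma-\beta)/2}$, both $m\ge 3$ and $k\ge 3$ hold automatically since $p\ge 3$, $\beta\ge 1$, and $\gamma>\beta$. For $p=2$ I would split by the gap $\gamma-\beta$ (always even and positive): if $\beta=1$ then $\Gamma=Z_2\oplus Z_{2^{2\alpha-1}}$ and Observation~\ref{obs: Z_2 + Z_2^c} applies ($\alpha\ge 2$); if $\gamma-\beta=2$ then $\Gamma=Z_{2^{2\delta+1}}\oplus Z_{2^{2\delta+3}}$ and Lemma~\ref{lem:Z_2^2d+1 x Z_2^2d+3} applies; and if $\beta\ge 3$ with $\gamma-\beta\ge 4$ then $k=2^{(\gamma-\beta)/2}\ge 4$ and $m=2^\beta\ge 8$, so Lemma~\ref{lem:Z_k^2m x Z_m} applies again. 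In each instance I would record $\alpha=(\beta+\gamma)/2$ and check that the side produced is indeed $km=p^\alpha$, which is routine.

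The main obstacle is precisely the $p=2$, both-odd regime. There the absence of an odd-order $\Gamma$-Kotzig array removes the flexibility enjoyed for odd $p$, and the small value $k=2$ occurring when $\gamma-\beta=2$ blocks Lemma~\ref{lem:Z_k^2m x Z_m}; this is exactly what forces reliance on the hand-built base squares of Observation~\ref{obs: Z_2 + Z_2^c} and Lemma~\ref{lem:Z_2^2d+1 x Z_2^2d+3} rather than a single uniform reduction. Once those base cases are granted, the rest of the proof is merely a bookkeeping verification that the cases above are exhaustive and that the cited hypotheses ($m>2$ for the cyclic seed, parity of the Kotzig row count, and $k,m\ge 3$ for Lemma~\ref{lem:Z_k^2m x Z_m}) are met in each branch.
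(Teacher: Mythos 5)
Your proof is correct, and its skeleton matches the paper's: split on the common parity of $\beta$ and $\gamma$, handle the even case by peeling a cyclic square-order factor off with Lemma~\ref{lem:SWL} and a Kotzig array, and handle the both-odd case by Lemma~\ref{lem:Z_k^2m x Z_m} together with the hand-built base cases (Observation~\ref{obs: Z_2 + Z_2^c} for $\beta=1$, $p=2$, and Lemma~\ref{lem:Z_2^2d+1 x Z_2^2d+3} when the exponent gap is $2$). There are two genuine divergences worth noting. First, in the even case you seed Lemma~\ref{lem:SWL} with the \emph{larger} cyclic factor $\ms{Z_{p^{2c}}}(p^{c})$ and peel off $Z_{p^{2b}}$, whereas the paper seeds with the smaller factor $\ms{Z_{p^{\beta}}}(p^{\delta})$; the paper's choice breaks down for $p=2$, $\beta=2$ (the seed would be a nonexistent $\ms{Z_4}(2)$) and forces it to invoke the separate construction of Observation~\ref{obs:Z_2gamma+Z_4}, which your ordering renders unnecessary. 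Second, you isolate the case $\beta=\gamma$ and dispatch it via Observation~\ref{obs:base-non-cyclic}; the paper does not, and its Case~2 as written genuinely does not cover $\beta=\gamma$ odd (it assumes $\gamma=\beta+2\kappa$ with $\kappa\geq1$, and for $\beta=\gamma=1$, $p$ odd, its application of Lemma~\ref{lem:Z_k^2m x Z_m} would need $k=1$, violating $k\geq3$). So your version is not just an alternative routing but slightly more careful on that boundary case. The remaining bookkeeping (parity of the Kotzig row count, $m>2$ for the cyclic seed, $k,m\geq3$, and the side coming out to $p^{\alpha}$) checks out in every branch.
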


\begin{proof}
	We treat the cases of $\beta$ even and odd separately.

\vskip6pt\noindent
\textit{\underline{Case 1} \ $\beta$ is even}

\begin{adjustwidth}{40pt}{0pt}
	We set $\beta=2\delta$; then $\gamma$ is also even, say $\gamma=2\lambda$. 
	When $p=2$ and $\beta=2$, then there exists $\gms(2^{\lambda+1})$ by Observation~\ref{obs:Z_2gamma+Z_4}.

	When either $p=2$ and $\beta>2$, or $p$ is odd, then by Theorem~\ref{thm:cyclic} there exist $\ms{Z_{p^{\beta}}}(p^{\delta})$ and a $Z_{p^{\gamma}}$-Kotzig array of size $p^{\delta}\times p^{\gamma}$ by Theorem~\ref{thm:Kotzig}. Hence, we are done by  Lemma~\ref{lem:SWL}.
\end{adjustwidth}

\vskip6pt\noindent
\textit{\underline{Case 2} \ $\beta$ is odd}
	
\begin{adjustwidth}{40pt}{0pt}

	We set $\beta=2\delta+1$; then $\gamma$ is also odd, say $\gamma=2\lambda+1$ and proceed by induction on $\beta$.
When $p=2$ and $\beta=1$, we use Observation~\ref{obs: Z_2 + Z_2^c}.
When $p>2$ and  $\beta=1$, we apply Lemma~\ref{lem:Z_k^2m x Z_m} for $m=p,k=p^{\lambda}$. Finally assume  $\beta\geq3$, then we set $\gamma=\beta+2\kappa$,  $\kappa\geq1$. If now $k=p^{\kappa}>2$, then apply 	Lemma~\ref{lem:Z_k^2m x Z_m} for $m=p^{\beta}$, $k=p^{\kappa}$, otherwise we can use Lemma~\ref{lem:Z_2^2d+1 x Z_2^2d+3} since $\Gamma=Z_{2^{2\delta+1}}\oplus Z_{2^{2\delta+3}}$.
\end{adjustwidth}
\vskip-17pt
\end{proof}



\section{Main construction}\label{sec: const}

Now when we have our toolbox complete, we proceed to constructions. We begin with the simplest case for squares with an odd side.


\subsection{Construction for $n$ odd}\label{sec: n odd}

The tools from Section~\ref{sec:prelim} now allow immediate proof for all groups $\Gamma$ of an odd order.

\begin{theorem}\label{thm:main-odd}
	Let $\Gamma$ be an Abelian group of order $n^2$ where $n\geq3$ is odd. Then there exists a $\Gamma$-magic square  $\gms(n)$ of side $n$.
\end{theorem}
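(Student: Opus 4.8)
The plan is to reduce an arbitrary odd‑order group to the building blocks already established --- cyclic groups via Theorem~\ref{thm:cyclic} and two‑generator $p$‑groups via Lemma~\ref{lem: Z_p^c + Z_p^c+d} --- and to glue the resulting pieces together with Lemma~\ref{lem:SWL}. First I would invoke the Fundamental Theorem of Finite Abelian Groups to write $\Gamma = Z_{p_1^{s_1}} \oplus \dots \oplus Z_{p_t^{s_t}}$. Since $|\Gamma| = n^2$ with $n$ odd, every prime $p_i$ is odd and, grouping the factors by prime, the total exponent of each prime equals its (necessarily even) exponent in the perfect square $n^2$; in particular each primary component has square order. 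The decisive consequence of oddness is that $\Gamma$ and all of its subgroups lie in $\mathcal{G}$, so by Theorem~\ref{thm:Kotzig} an $H$‑Kotzig array of size $m \times |H|$ exists for every subgroup $H$ and every $m > 1$. This removes any parity obstruction to applying Lemma~\ref{lem:SWL}, which is exactly why the odd case is the easy one.

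I would then argue by induction on the number of cyclic factors $t$. The base cases are $t = 1$, where $\Gamma = Z_{p^{2\alpha}}$ is cyclic and Theorem~\ref{thm:cyclic} applies directly, and the case of a two‑factor $p$‑group $\Gamma = Z_{p^\beta}\oplus Z_{p^\gamma}$ (with $\beta + \gamma$ even and $|\Gamma| = p^{2\alpha} \geq 9 > 4$), handled by Lemma~\ref{lem: Z_p^c + Z_p^c+d}. This two‑factor case must be isolated, because when $\beta$ and $\gamma$ are both odd the group \emph{cannot} be split into two summands of square order, so Lemma~\ref{lem:SWL} is simply unavailable and the dedicated construction of Lemma~\ref{lem: Z_p^c + Z_p^c+d} is genuinely needed.

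For the inductive step I would decompose $\Gamma \cong \Gamma_0 \oplus H$ into two nontrivial summands, each of square order $|\Gamma_0|=m^2$, $|H|=k^2$ and each with strictly fewer cyclic factors, then combine a $\Gamma_0$‑magic square (obtained by induction) with an $H$‑Kotzig array through Lemma~\ref{lem:SWL}. If $\Gamma$ involves at least two distinct primes I take $\Gamma_0$ to be one full primary component and $H$ the union of the others; both have even total exponent, hence square order, and each has between $1$ and $t-1$ factors. If instead $\Gamma$ is a single‑prime group with $t \geq 3$ factors $Z_{p^{a_1}}\oplus\cdots\oplus Z_{p^{a_t}}$, I would partition the exponent multiset into two nonempty blocks of even sum: since $\sum a_i$ is even the number of odd $a_i$ is even, so I can either peel off one even exponent (when all are even) or pair two odd exponents together (when some are odd), the complementary block being nonempty and of even sum precisely because $t \geq 3$. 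Each block then yields a subgroup of square order with $m,k>1$, so the hypotheses of Lemma~\ref{lem:SWL} are met and the induction closes.

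The step I expect to be the main obstacle is this combinatorial splitting: guaranteeing in the single‑prime case that the exponent multiset always admits a partition into two nonempty even‑sum blocks, so that both $|\Gamma_0|$ and $|H|$ are perfect squares with $m>1$ as required by Lemma~\ref{lem:SWL}, while simultaneously ensuring that the unsplittable two‑generator $p$‑groups are routed to Lemma~\ref{lem: Z_p^c + Z_p^c+d} rather than to the splitting. Once this parity bookkeeping is arranged, everything else is automatic, since oddness places all of the relevant groups in $\mathcal{G}$ and hence supplies the Kotzig arrays needed to glue.
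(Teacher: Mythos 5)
Your proposal is correct and follows essentially the same route as the paper: isolate a square-order $p$-primary piece that is either cyclic (Theorem~\ref{thm:cyclic}) or a two-generator $p$-group (Lemma~\ref{lem: Z_p^c + Z_p^c+d}), and glue via Lemma~\ref{lem:SWL} using the fact that odd order puts every complementary summand in $\mathcal{G}$, so Theorem~\ref{thm:Kotzig} always supplies the needed Kotzig array. The only difference is organizational: the paper avoids your induction on the number of cyclic factors by applying Lemma~\ref{lem:SWL} once, taking $H$ to be the single one- or two-generator $p$-group piece with a magic square and letting one Kotzig array handle the entire complement $K$ in one step.
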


\begin{proof}
	By Fundamental Theorem of Finite Abelian Groups the group $\Gamma$ is either  cyclic and the result follows directly from Theorem~\ref{thm:cyclic}, or $\Gamma\cong H\oplus K$, where $|H|=p^{2\lambda}$ for some prime number $p>2$ and integer $\lambda\ge 1$. In the latter case, we either have $H\cong Z_{p^{2\lambda}}$ and $\ms{H}(p^{\lambda})$ exists by Theorem~\ref{thm:cyclic}, or $H\cong Z_{p^{\alpha}}\otimes Z_{p^{\beta}}$  for $\alpha+\beta=2\lambda$ and $\ms{H}(p^{\lambda})$ exists by Lemma~\ref{lem: Z_p^c + Z_p^c+d}.
	 If 
{$\Gamma=H$} 
 	we are done, otherwise 
{$|K|>1$ is odd} and
	$K \in \mathcal{G}$. Then by Theorem~\ref{thm:Kotzig} there exists a {$K$-Kotzig array}  of size $p^{\lambda}\times |K|$ and  we apply Lemma~\ref{lem:SWL}.
\end{proof}


\subsection{Construction for $n=2^s$}\label{sec:n=2^s}
In this section we deal with the case when $n$ is a power of $2$.

\begin{theorem}\label{thm:order 2^s}
	Let $|\Gamma| = 2^{2\alpha}$ and $\alpha\geq2$. Then there exists an $\gms(2^{\alpha})$.
\end{theorem}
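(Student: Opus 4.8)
The plan is to strip an arbitrary Abelian $2$-group down to a short list of explicit base squares by removing a single well-chosen direct summand and invoking Lemma~\ref{lem:SWL}. Writing $\Gamma\cong Z_{2^{s_1}}\oplus\cdots\oplus Z_{2^{s_t}}$ with $s_1+\cdots+s_t=2\alpha$ via the Fundamental Theorem, the key observation is that Lemma~\ref{lem:SWL} asks only for a magic square on one summand $\Gamma_0$ and a Kotzig array on its complement $H$. Since every summand of a $2$-group has even order, the side $m=2^a$ of $\Gamma_0$ is even, so by Theorem~\ref{thm:Kotzig} the needed $H$-Kotzig array of size $m\times|H|$ exists automatically once $m>1$; moreover $|H|=2^{2\alpha-2a}$ is then a square. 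Consequently one application of Lemma~\ref{lem:SWL} finishes $\Gamma$ the moment we can split off a summand $\Gamma_0$ of order $2^{2a}$ with $a\ge 2$ that is known to carry a magic square $\mathrm{MS}_{\Gamma_0}(2^a)$.

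First I would clear the base cases $t\le 2$: a cyclic $\Gamma$ is handled by Theorem~\ref{thm:cyclic}, while $\Gamma\cong Z_{2^\beta}\oplus Z_{2^\gamma}$ falls to Observation~\ref{obs:base-non-cyclic} when $\beta=\gamma$ and to Lemma~\ref{lem: Z_p^c + Z_p^c+d} when $\beta<\gamma$ (here $|\Gamma|=2^{2\alpha}\ge16>4$). Then, for $t\ge 3$, I would look for one or two of the cyclic factors whose exponents have even sum at least $4$, namely a single even $s_i\ge 4$, or any two factors of equal parity summing to at least $4$; their direct sum $\Gamma_0$ has side $2^a\ge 4$ and a magic square by the $t\le 2$ cases, and Lemma~\ref{lem:SWL} with $H$ the remaining factors delivers $\mathrm{MS}_\Gamma(2^\alpha)$. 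A short parity count — the number of odd exponents is even because $\sum s_i$ is even — shows that such a pair is unavailable only for the two families $\Gamma\cong(Z_2)^{2\alpha}$ and $\Gamma\cong Z_4\oplus(Z_2)^{2\alpha-2}$.

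These two families are the crux. For $\alpha\ge 3$ each contains at least four copies of $Z_2$, so I peel off $\Gamma_0=(Z_2)^4$ of side $4$ and finish by Lemma~\ref{lem:SWL} as before. What remains are the two order-$16$, side-$4$ groups $(Z_2)^4$ and $Z_4\oplus Z_2\oplus Z_2$, which have no proper summand carrying a magic square and therefore require a direct construction. I would obtain both from a pair of doubly diagonal mutually orthogonal Latin squares of side $4$, which exist by Theorem~\ref{thm:diag-MOLS}: relabelling the symbols of the two squares by the elements of two order-$4$ groups $A$ and $B$ yields a $4\times4$ array over $A\oplus B$ whose every row, column and both diagonals run through all of $A$ in the first coordinate and all of $B$ in the second, hence sum to the fixed constant $(\sum_{a\in A}a,\sum_{b\in B}b)$, while orthogonality makes all $16$ entries distinct. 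Choosing $(A,B)=(Z_2\oplus Z_2,\,Z_2\oplus Z_2)$ and $(A,B)=(Z_2\oplus Z_2,\,Z_4)$ produces the two squares.

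The main obstacle is precisely this final step. The peeling-plus-Kotzig machinery reduces everything except the groups assembled purely from the smallest factors, and for $(Z_2)^4$ and $Z_4\oplus Z_2\oplus Z_2$ there is no usable proper summand, so they cannot be assembled and must be built outright; the doubly diagonal MOLS relabelling is what I expect to carry that weight. By comparison, the parity bookkeeping that guarantees a good summand in all other cases, and the verification that the residual complement always has square order, are routine.
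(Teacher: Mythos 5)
Your proposal is correct, and it reaches the same destination through the same central device (splitting $\Gamma=\Gamma_0\oplus H$ and applying Lemma~\ref{lem:SWL}, with the Kotzig array guaranteed by Theorem~\ref{thm:Kotzig} since the side is even), but the organization is genuinely different from the paper's. The paper argues by induction on $|\Gamma|$: whenever some $\beta_i\le 2$ it peels off a \emph{small} complement $H$ of order $4$ and takes $\Gamma_0=K$ to be the \emph{large} part, handled by the inductive hypothesis; its base case $\alpha=2$ is settled by exhibiting all four non-cyclic squares of side $4$ explicitly in Figures~\ref{fig: Z_2 x Z_8 and Z_4 x Z_4}--\ref{fig: Z_2^4}. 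You invert this: $\Gamma_0$ is always a one- or two-factor summand of square order at least $16$ covered by Theorem~\ref{thm:cyclic}, Observation~\ref{obs:base-non-cyclic}, or Lemma~\ref{lem: Z_p^c + Z_p^c+d}, and $H$ absorbs everything else, so no induction on $|\Gamma|$ is needed. Your parity analysis correctly isolates the only obstructions, $(Z_2)^{2\alpha}$ and $Z_4\oplus(Z_2)^{2\alpha-2}$, and peeling off $(Z_2)^4$ disposes of them for $\alpha\ge3$; for $\alpha=2$ your DDMOLS relabelling (Theorem~\ref{thm:diag-MOLS} with side $4$, symbols replaced by the elements of two order-$4$ groups) is a clean and valid replacement for two of the paper's explicit figures, and is really just the observation that the construction behind Observation~\ref{obs:base-non-cyclic} works for any direct sum of two groups of the same order, not only $Z_n\oplus Z_n$. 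What your route buys is a shorter, induction-free argument with fewer ad hoc tables; what the paper's route buys is that the inductive step needs only the single uniform split ``big part plus order-$4$ complement'' in most cases, at the price of verifying four explicit order-$16$ squares by hand. One small point worth making explicit in your write-up: a lone factor $Z_4$ (exponent $s_i=2$) cannot serve as $\Gamma_0$ because $\gms(2)$ does not exist, which is exactly why the threshold ``sum at least $4$'' in your pairing criterion is the right one.
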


\begin{proof}

	If $\Gamma$ is cyclic, then we are done by Theorem~\ref{thm:cyclic}. 

Assume that $\Gamma\not \cong Z_{2^{2\alpha}}$. The proof will be by induction on $|\Gamma|$.
Let {$\alpha=2$}, 
{then the $\ms{\Gamma}(4)$ for 
$\Gamma\in\{Z_2\oplus Z_8,Z_4\oplus Z_4,Z_2\oplus Z_2\oplus Z_4,Z_2\oplus Z_2\oplus Z_2\oplus Z_2\}$ are shown in Figures~\ref{fig: Z_2 x Z_8 and Z_4 x Z_4}--\ref{fig: Z_2^4}. }

\vskip10pt 

\begin{figure}[H]
\begin{subfigure}[t]{0.5\textwidth}
\begin{center}
\begin{tabular}{|c|c|c|c|}
\hline
    (0,0)  & (0,1) & (0,3) & (0,2)\\ \hline
    (0,7)  & (0,6) & (0,4) & (0,5)\\ \hline
    (1,0)  & (1,1) & (1,3) & (1,2)\\ \hline
    (1,7)  & (1,6) & (1,4) & (1,5)\\ \hline
  \end{tabular}
\subcaption{$\ms{Z_2\oplus Z_8}$}
\end{center}
\end{subfigure}
\hfill
\begin{subfigure}[t]{0.5\textwidth}
\begin{center}
\begin{tabular}{|c|c|c|c|}
\hline
    (1,1)  & (0,2) & (3,3) & (2,0)\\ \hline
    (0,3)  & (1,0) & (2,1) & (3,2)\\ \hline
    (2,2)  & (3,1) & (0,0) & (1,3)\\ \hline
	(3,0)  & (2,3) & (1,2) & (0,1)\\ \hline	
  \end{tabular} 
\subcaption{$\ms{Z_4\oplus Z_4}(4)$}
\end{center}
\end{subfigure}
\caption{}
\label{fig: Z_2 x Z_8 and Z_4 x Z_4}
\end{figure}

\vskip10pt

\begin{figure}[H]
\begin{center}
\begin{tabular}{|c|c|c|c|}
\hline
    (0,0,3)  & (0,0,2) & (0,1,3) & (0,1,2)\\ \hline
    (0,0,0)  & (0,0,1) & (0,1,0) & (0,1,1)\\ \hline
    (1,0,0)  & (1,0,1) & (1,1,0) & (1,1,1)\\ \hline
    (1,0,3)  & (1,0,2) & (1,1,3) & (1,1,2)\\ \hline
\end{tabular} 
\end{center}
\caption{$\ms{Z_2\oplus Z_2\oplus Z_4}(4)$}
\label{fig: Z_2 x Z_2 x Z_4}
\end{figure}


\vskip10pt
\begin{figure}[H]
\begin{center}
\begin{tabular}{|c|c|c|c|}
\hline
    (0,0,0,0)  & (0,1,0,0) & (0,0,0,1) & (0,1,0,1)\\ \hline
    (1,1,0,0)  & (1,0,0,0) & (1,1,0,1) & (1,0,0,1)\\ \hline
    (0,0,1,0)  & (0,1,1,0) & (0,0,1,1) & (0,1,1,1)\\ \hline
    (1,1,1,0)  & (1,0,1,0) & (1,1,1,1) & (1,0,1,1)\\ \hline
\end{tabular} 
\end{center}
\caption{$\ms{Z_2\oplus Z_2\oplus Z_2\oplus Z_2}(4)$}
\label{fig: Z_2^4}
\end{figure}

\vskip10pt
  Thus assume that $\alpha\geq 3$. Let $\Gamma=Z_{2^{\beta_1}}\oplus Z_{2^{\beta_2}}\oplus\dots\oplus Z_{2^{\beta_t}}$ and $\beta_1\leq\beta_2\leq\dots\leq\beta_t$.

	When $\beta_1=\beta_2=1$ or $\beta_1=2$ or $\beta_2=2$  we have $\Gamma=K\oplus H$ for $|H|=4$.  Because $\alpha\geq3$, we have {$|K|\geq 2^{4}$} and there exists a $\ms{K}(2^{\alpha-1})$ by inductive hypothesis and an $H$-Kotzig array of size $2^{\alpha-1}\times 4$ based on Theorem~\ref{thm:Kotzig}. The existence of $\gms(2^\alpha)$ then follows from Lemma~\ref{lem:SWL}.

	Assume now that $\beta_1>2$ is even. Thus $\beta_1=2\delta\geq 4$ and $\Gamma=K\oplus H$ where $K=Z_{2^{2\delta}}$. Therefore there exists a $\ms{K}(2^{\delta})$ by Theorem~\ref{thm:msquare} and  there exists an $H$-Kotzig array of size $2^{\delta}\times |H|$ by Theorem~\ref{thm:Kotzig}. The existence of $\gms(2^\alpha)$ then follows from Lemma~\ref{lem:SWL}.

	When $\beta_1\geq 1$ is odd, there must be a $\beta_{i}$ that is also odd and greater than one. Therefore  $\Gamma=K\oplus H$ where $K=Z_{2^{\beta_1}}\oplus Z_{2^{\beta_i}}$. Then there is an $\ms{K}(2^{(\beta_1+\beta_2)/2})$ by {Lemma~\ref{lem: Z_p^c + Z_p^c+d}}. If $t=2$, then we are done since $|H|=1$, for $t>1$ there is $|H|$ even and  there exists an $H$-Kotzig array of size $2^{(\beta_1+\beta_2)/2}\times |H|$ by Theorem~\ref{thm:Kotzig}. The existence of $\gms(2^\alpha)$ again follows from Lemma~\ref{lem:SWL}. This finishes the proof.
\end{proof}


\subsection{Construction for $n$ even, $n\neq 2^s$}\label{sec:n even}

In this section, we consider the case when $n$ is even but not a power of two. 
We start with the case when $\Gamma\cong Z_{4n}\oplus Z_n$ for $n$ odd.

\begin{lemma}\label{lem:Z_4n + Z_n}
Let $\Gamma\cong Z_{4n}\oplus Z_n$ for $n\geq 3$ odd. Then there exists a $\Gamma$-magic square $\ms{\Gamma}(2n)$.
\end{lemma}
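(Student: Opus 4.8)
The plan is to reduce to a Kronecker-type blow-up, the same mechanism as in Lemma~\ref{lem:Z_k^2m x Z_m}, but with the side-$k$ ingredient $k=2$ that the earlier lemma could not use. Since $n$ is odd we have $\gcd(4,n)=1$, so by the Chinese Remainder Theorem $Z_{4n}\cong Z_4\oplus Z_n$ and hence $\Gamma\cong Z_4\oplus Z_n\oplus Z_n$. The construction in Lemma~\ref{lem:Z_k^2m x Z_m} is stated only for $k\ge 3$ precisely because it feeds in a side-$k$ integer magic square, which fails to exist for $k=2$; my plan is to replace that missing side-$2$ square by a $2\times2$ quadrant pattern over $Z_4$.

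Concretely, take the $\ms{Z_n\oplus Z_n}(n)=B=\{b_{i,j}\}$ guaranteed by Observation~\ref{obs:base-non-cyclic}, with magic constant $\mu_B=(\mu_B^{(1)},\mu_B^{(2)})$, and blow up each cell into a $2\times2$ block. Writing the rows and columns of the target $2n\times2n$ array as $u=2q_1+r_1$ and $v=2q_2+r_2$ with $0\le q_1,q_2\le n-1$ and $r_1,r_2\in\{0,1\}$, I would set
$$c_{u,v}=\bigl(4\,b^{(1)}_{q_1,q_2}+d_{u,v},\,b^{(2)}_{q_1,q_2}\bigr)\in Z_{4n}\oplus Z_n,$$
where $D=\{d_{u,v}\}$ is an auxiliary $2n\times2n$ array with entries in $\{0,1,2,3\}$ such that
\begin{enumerate}
\item[(i)] the four entries in every aligned $2\times2$ block are a permutation of $\{0,1,2,3\}$, and
\item[(ii)] every row, every column, and both main diagonals of $D$ sum to $3n$.
\end{enumerate}
Given such a $D$ the rest is bookkeeping. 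Each line of $c$ meets each relevant entry of $B$ exactly twice, so the $Z_n\oplus Z_n$-coordinate of every row, column, and diagonal sum equals $2\mu_B$; by (ii) the $d$-contribution to the $Z_{4n}$-coordinate is the constant $3n$, so that coordinate equals $4(2\mu_B^{(1)})+3n$ on every line. Distinctness of all $4n^2$ entries follows from (i) and the fact that $x\mapsto 4x$ is injective from $Z_n$ onto the order-$n$ subgroup of $Z_{4n}$: equality of two entries forces equal second coordinate, then equal $d$-value (hence the same position inside a block by (i)), then equal $b$-value (hence the same block).

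The whole difficulty is thus conditions (i)--(ii); this array $D$ is exactly a medjig square of order $n$, and producing it is the step I expect to be the main obstacle. The obvious idea of placing the $Z_4$-magic rectangle of Figure~\ref{fig: MR_Z_4} in every block fails, since its row sum $1$ and column sum $3$ force all row sums of $D$ to be $n$ and all column sums to be $3n$, and $n\not\equiv3n\pmod4$ for odd $n$; a short parity argument shows that no tiling using only tiles balanced in both directions can reconcile the row and column totals. The resolution is to use tiles that are balanced in the rows but deliberately unbalanced in the columns and to lay them out so the column totals still equalise, which is exactly what Conway's LUX arrangement achieves for every singly even order. For $n=2k+1$ one takes the $n\times n$ letter array consisting of $k+1$ rows of $L$, one row of $U$, and $k-1$ rows of $X$, swaps the central $U$ with the $L$ above it, and expands each letter into its $2\times2$ offset tile, all of whose rows sum to $3$; combined with any odd-order magic square this yields a $D$ meeting (i)--(ii). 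The smallest instance $n=3$ gives the explicit
$$D=\begin{pmatrix}3&0&3&0&3&0\\1&2&1&2&1&2\\3&0&0&3&3&0\\1&2&1&2&1&2\\0&3&3&0&0&3\\1&2&1&2&1&2\end{pmatrix},$$
in which all six rows, all six columns, and both diagonals sum to $9$, and each aligned $2\times2$ block is a permutation of $\{0,1,2,3\}$. Verifying the LUX diagonals in general, and double-checking the distinctness bookkeeping above, are the only points that need care.
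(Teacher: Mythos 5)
Your proposal is correct, but it takes a genuinely different route from the paper. The paper gives a direct, self-contained construction: it lays out $Z_{4n}$ in a $2\times 2n$ array with constant column sums, performs a few element swaps, tensors with $Z_n$ in the second coordinate to get a $2n\times 2n$ array whose rows and columns are already magic, and then repairs the two diagonals by a short list of targeted transpositions that depends on $n\bmod 4$. You instead use the CRT splitting $Z_{4n}\cong Z_4\oplus Z_n$ and reduce the problem to a Kronecker-type $2\times2$ blow-up of the $\ms{Z_n\oplus Z_n}(n)$ from Observation~\ref{obs:base-non-cyclic}, with the missing side-$2$ ingredient supplied by a medjig square built from Conway's LUX arrangement. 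Your route is more modular and connects the lemma to classical singly-even magic square theory; the paper's route avoids importing LUX but pays for it with ad hoc case analysis on $n\bmod4$. Your bookkeeping (distinctness via reduction mod $4$ and injectivity of $x\mapsto 4x$; line sums $\bigl(4(2\mu_B^{(1)})+3n,\,2\mu_B^{(2)}\bigr)$) is sound.

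The one step you flagged as unverified, the LUX diagonal sums, does go through in general, so there is no real gap. With $n=2k+1$ and tile offsets $L=\left(\begin{smallmatrix}3&0\\1&2\end{smallmatrix}\right)$, $U=\left(\begin{smallmatrix}0&3\\1&2\end{smallmatrix}\right)$, $X=\left(\begin{smallmatrix}0&3\\2&1\end{smallmatrix}\right)$, a diagonal tile contributes its own main-diagonal sum ($5$, $2$, $1$ respectively) to the main diagonal of $D$ and its antidiagonal sum ($1$, $4$, $5$) to the back diagonal. After the central swap, the main diagonal of the letter array reads $k$ copies of $L$, two of $U$, and $k-1$ of $X$, giving $5k+4+(k-1)=6k+3=3n$; the back diagonal reads $k$ copies of $L$, two of $U$, and $k-1$ of $X$ as well, giving $k+8+5(k-1)=6k+3=3n$. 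Rows are automatic (every tile has both row sums $3$) and the column totals $(k+1)\cdot4+1+(k-1)\cdot2=(k+1)\cdot2+5+(k-1)\cdot4=3n$ are unaffected by the swap, which permutes tiles within a tile-column. One small quibble: your remark that ``no tiling using only tiles balanced in both directions can reconcile the totals'' is vacuous rather than a parity obstruction, since no $2\times2$ permutation tile of $\{0,1,2,3\}$ has all row and column sums equal to $3$ in the first place; this does not affect the argument.
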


\begin{proof}
We will show the construction of a $\Gamma$-magic square $\ms{\Gamma}(2n)$ in several steps.

\textbf{STEP 1.} Put elements of {$Z_{4n}$} in two rows such that, the sum in each column is 
{${4n-1}$}:
$$\begin{bmatrix}0&1&\ldots&n-1&n&n+1&\ldots&2n-2&2n-1\\
4n-1&4n-2&\ldots&3n&3n-1&3n-2&\ldots&2n+1&2n
\end{bmatrix}$$
Note that in this case each the row sum is $n$.

\textbf{STEP 2.} Exchange the element $n$ with $3n-1$ and $2n$ with $2n-1$ in the array:
$$R=\begin{bmatrix}0&1&\ldots&n-1&3n-1&n+1&\ldots&2n-2&2n\\
4n-1&4n-2&\ldots&3n&n&3n-2&\ldots&2n+1&2n-1
\end{bmatrix}$$
Note that now the sum in the first row is $n-n+3n-1-(2n-1)+2n=3n$, and in the second: $n+n-(3n-1)+(2n-1)-2n=-n\equiv 3n \pmod{4n}$.
Let $r_{i,j}$ be the element of this array in the $i$-th row and $j$-th column, for $i=1,2$ and $j=1,2,\ldots,2n$.

\textbf{STEP 3.} We will built an array $A$ of size $2n\times 2n$, such that

$$a_{i,j}=\begin{cases}(r_{1,j}, i-1),&i=1,2,\ldots,n \\
(r_{2,j}, i-1-n),&i=n+1,n+2,\ldots,2n \end{cases}$$
for $i,j\in\{1,2,\ldots,n\}$. Note that $a_{i,j} \in Z_{4n}\oplus Z_n$. 

\begin{adjustwidth}{-87.5pt}{}
$$\begin{bmatrix}(0,0)&(1,0)&\ldots&(n-1,0)&(3n-1,0)&(n+1,0)&\ldots&(2n-2,0)&(2n,0)\\
(0,1)&(1,1)&\ldots&(n-1,1)&(3n-1,1)&(n+1,1)&\ldots&(2n-2,1)&(2n,1)\\
\vdots&\vdots&\ldots&\vdots&\vdots&\vdots&\ldots&\vdots&\vdots\\
(0,n-1)&(1,n-1)&\ldots&(n-1,n-1)&(3n-1,n-1)&(n+1,n-1)&\ldots&(2n-2,n-1)&(2n,n-1)\\
(4n-1,0)&(4n-2,0)&\ldots&(3n,0)&(n,0)&(3n-2,0)&\ldots&(2n+1,0)&(2n-1,0)\\
(4n-1,1)&(4n-2,1)&\ldots&(3n,1)&(n,1)&(3n-2,1)&\ldots&(2n+1,1)&(2n-1,1)\\
\vdots&\vdots&\ldots&\vdots&\vdots&\vdots&\ldots&\vdots&\vdots\\
(4n-1,n-1)&(4n-2,n-1)&\ldots&(3n,n-1)&(n,n-1)&(3n-2,n-1)&\ldots&(2n+1,n-1)&(2n-1,n-1)\\
\end{bmatrix}$$
\end{adjustwidth}

	Now we need to check the row, column, and diagonal sums. For row $i$, we have
	\begin{align*}
	\rho_i 	&= \sum_{j=1}^{2m}a_{i,j}=(3n,2ni)=(3n,0).
	\end{align*}
	Similarly, for column $v$ we have
	\begin{align*}
	\sigma_j&= \sum_{i=1}^{2m}a_{i,j}=(-n,2\sum_{i=0}^{n-1}i)=(3n,0).
	\end{align*}
	For the main diagonal we obtain
	\begin{align*}
	\delta	&= \sum_{i=1}^{2n}a_{i,i}= (\sum_{i=0}^{n}i+\sum_{i=2}^{n-1}(3n-i)+2n-1,2\sum_{i=0}^{n-1}i)=(n-n^2,0).
	\end{align*}
	For the backward diagonal we obtain
	\begin{align*}
	\eta	&= \sum_{i=1}^{2n}a_{i,n+1-i}= (\sum_{i=1}^{n+1}(4n-i)+\sum_{i=1}^{n-2}(n+i)+2n,2\sum_{i=0}^{n-1}i)=(n+n^2,0).
	\end{align*}

\textbf{STEP 4.} We will permute some elements of the array $A$ of size $2n\times 2n$ to obtain a $\ms\Gamma(2n)=\{m_{i,j}\}_{i,j=1}^{2n}$.
Note that for $n\equiv 1 \pmod 4$, there is $\delta=(0,0)$ and $\eta=(2n,0)$, whereas for $n\equiv 3 \pmod 4$, there is $\delta=(2n,0)$ and $\eta=(0,0)$.

\newpage
Assume first that $n\equiv 1 \pmod 4$.
Let 
\begin{align*}
	&m_{n+1,n+1}		&&=a_{n+1,n}	&&=(3n,0),\\ 
	&m_{n+1,n}			&&=a_{n+1,n+1}	&&=(n,0),\\  
	&m_{1,n+1}			&&=a_{1,n}		&&=(n-1,0),\\ 
	&m_{1,n}			&&=a_{1,n+1}	&&=(3n-1,0),\\
	&m_{(n+1)/2,(n+1)/2}
		&&=a_{(n+1)/2,(3n+1)/2}			&&=((3n-1)/2,(n-1)/2),\\ 
	&m_{(n+1)/2,(3n+1)/2}
		&&=a_{(n+1)/2,(n+1)/2}			&&=((n-1)/2,(n-1)/2),\\  
	&m_{n+1,(n+1)/2}
		&&=a_{n+1,(3n+1)/2}				&&=((5n-1)/2,0),\\ 
	&m_{n+1,(3n-1)/2}	
		&&=a_{n+1,(n+1)/2}				&&=((7n-1)/2,0).
\end{align*}
	{For all other entries not appearing in the above list, we set $m_{i,j}=a_{i,j}.$}

	Observe that now all row, column, diagonal and backward diagonal sums are $(3n,0)$.

Assume now that $n\equiv 3 \pmod 4$.
Let 
\begin{align*}
	&m_{(n+1)/2,(n+1)/2}	&&=a_{(n+1)/2,(3n+1)/2}	&&=((3n-1)/2,(n-1)/2),\\
	&m_{(n+1)/2,(3n+1)/2}	&&=a_{(n+1)/2,(n+1)/2}	&&=((n-1)/2,(n-1)/2),\\
	&m_{n+1,(n+1)/2}		&&=a_{n+1,(3n+1)/2}		&&=((5n-1)/2,0),\\
	&m_{n+1,(3n+1)/2}		&&=a_{n+1,(n+1)/2}		&&=((7n-1)/2,0)
\end{align*}
	{For all other entries not appearing in the above list, we again set $m_{i,j}=a_{i,j}.$}

	Observe that now all row, column, diagonal and backward diagonal sums are $(3n,0)$.
\end{proof}

\begin{exm}\label{exm: Z_12 x Z_3}
In Figure~\ref{fig:example Z_12 x Z_3} we show an $\text{MS}_{ Z_{12}\oplus Z_3}(6)$. 
\end{exm}

\begin{figure}[h]
\begin{center}
$
\begin{array}{|c|c|c|c|c|c|}
	\hline
	(0,0)& (1,0)& (2,0)& (8,0)& (4,0)& (6,0)\\\hline
	(0,1)& (4,1)& (2,1)& (8,1)& (1,1)& (6,1)\\\hline
	(0,2)& (1,2)& (2,2)& (8,2)& (4,2)& (6,2)\\\hline
	(11,0)& (7,0)& (9,0)& (3,0)& (10,0)& (5,0)\\\hline
	(11,1)& (10,1)& (9,1)& (3,1)& (7,1)& (5,1)\\\hline
	(11,2)& (10,2)& (9,2)& (3,2)& (7,2)& (5,2)\\\hline
\end{array}
$
\end{center}	
\caption{${\ms{Z_{12}}\oplus Z_3}(6)$}
\label{fig:example Z_12 x Z_3}
\end{figure}

The proof of the lemma below is similar to the proof of Lemma~\ref{lem:SWL}, but instead of a Kotzig array on an Abelian group, we will use  a regular Kotzig array.


\begin{lemma}\label{lem:gl2}
Let $\Gamma\cong Z_{4n^{\alpha}}\oplus Z_{n^{\beta}}$  where $n>1$ {is odd}, $\alpha,\beta\geq 1$ and $\alpha+\beta$ even. Then there exists a $\Gamma$-magic square $\ms{\Gamma}(2n^{(\alpha+\beta)/2})$.
\end{lemma}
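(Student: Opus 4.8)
The plan is to run the tiling/Kronecker argument of Lemma~\ref{lem:SWL} and Lemma~\ref{lem:Z_k^2m x Z_m}, splitting $\Gamma$ into a ``low'' part carried by the base square of Lemma~\ref{lem:Z_4n + Z_n} and a ``high'' odd-order part carried by a magic square from the odd case. Write $\lambda=(\alpha+\beta)/2$, so the target side is $N=2n^{\lambda}$ and $|\Gamma|=4n^{2\lambda}=N^2$. The crucial observation is that, since $n$ is odd, every $x\in Z_{4n^{\alpha}}$ has a unique mixed-radix form $x=4n\,r+b$ with $b\in\{0,\dots,4n-1\}$, $r\in\{0,\dots,n^{\alpha-1}-1\}$, and every $y\in Z_{n^{\beta}}$ writes uniquely as $y=n\,s+c$ with $c\in\{0,\dots,n-1\}$, $s\in\{0,\dots,n^{\beta-1}-1\}$. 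Hence, as \emph{sets},
$$Z_{4n^{\alpha}}\oplus Z_{n^{\beta}}\ \longleftrightarrow\ (Z_{4n}\oplus Z_{n})\times\bigl(Z_{n^{\alpha-1}}\oplus Z_{n^{\beta-1}}\bigr),$$
identifying $\Gamma$ with pairs (low element, high element). This is only a set bijection, not a group isomorphism; that is exactly why Lemma~\ref{lem:SWL}, whose residuals sit in a genuine direct summand, does not apply, and why the high part must be folded back into the same cyclic factors by scaling (the role played here by a regular, i.e.\ integer-valued, residual structure rather than a $\Gamma$-Kotzig array).

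For the base case $\lambda=1$ (that is $\alpha=\beta=1$) the square is delivered directly by Lemma~\ref{lem:Z_4n + Z_n}. So assume $\lambda\geq2$ and put $K=n^{\lambda-1}\geq 3$. Let $B=\{b_{i,j}\}=\ms{Z_{4n}\oplus Z_n}(2n)$ be the base square from Lemma~\ref{lem:Z_4n + Z_n}, with $b_{i,j}=(b^{(1)}_{i,j},b^{(2)}_{i,j})$ and magic constant $(3n,0)$. The group $H=Z_{n^{\alpha-1}}\oplus Z_{n^{\beta-1}}$ has odd order $(n^{\lambda-1})^2$, so by Theorem~\ref{thm:main-odd} there is a magic square $D=\{d_{p,q}\}=\ms{H}(K)$ with $d_{p,q}=(d^{(1)}_{p,q},d^{(2)}_{p,q})$ and magic constant $(\mu_D^{(1)},\mu_D^{(2)})$. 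I then fill an $N\times N$ array by a $K\times K$ grid of $2n\times 2n$ blocks: writing $u=2n\,p+i$, $v=2n\,q+j$ with $0\le p,q<K$, $0\le i,j<2n$, set
$$c_{u,v}=\bigl(4n\,d^{(1)}_{p,q}+b^{(1)}_{i,j},\ n\,d^{(2)}_{p,q}+b^{(2)}_{i,j}\bigr)\in\Gamma.$$
By the mixed-radix bijection, each element of $\Gamma$ occurs exactly once, since $B$ realizes each low element once and $D$ each high element once.

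It remains to verify the four families of line sums. For a row ($i,p$ fixed, summing over $j$ then $q$) the first coordinate is $8n^2\mu_D^{(1)}+3nK$ and the second is $2n^2\mu_D^{(2)}$; the key point is that multiplying by $4n$ (resp.\ $n$) makes $4n\,d^{(1)}_{p,q}$ (resp.\ $n\,d^{(2)}_{p,q}$) a well-defined element of $Z_{4n^{\alpha}}$ (resp.\ $Z_{n^{\beta}}$) depending only on the class of $d^{(1)}_{p,q}$ modulo $n^{\alpha-1}$ (resp.\ $d^{(2)}_{p,q}$ modulo $n^{\beta-1}$), since $4n\cdot n^{\alpha-1}=4n^{\alpha}\equiv0$ and $n\cdot n^{\beta-1}=n^{\beta}\equiv0$. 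Columns are symmetric. For the diagonals one checks that $u=v$ forces $p=q$ and $i=j$, so the main diagonal of the big array runs through the diagonal blocks using the main diagonals of both $D$ and $B$; likewise $u+v=N-1$ forces $p+q=K-1$ and $i+j=2n-1$, giving the two backward diagonals. As $D$ and $B$ are magic, all four sums agree with the row value $(8n^2\mu_D^{(1)}+3nK,\,2n^2\mu_D^{(2)})$, so $C$ is $\Gamma$-magic.

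The main obstacle is conceptual rather than computational: recognizing that $\Gamma$ does \emph{not} split as a direct sum matching the two building blocks, so the high-part values cannot be stored in a separate coordinate as in Lemma~\ref{lem:SWL} but must be scaled by $4n$ and $n$ and added into the existing cyclic factors. The only genuine things to verify are that this scaling simultaneously makes the covering bijective and annihilates the ambiguity of the $H$-representatives modulo $4n^{\alpha}$ and $n^{\beta}$; the diagonal bookkeeping---that both diagonals of $B$ and of $D$ are struck exactly once---is the one place where care is needed, but it follows from the coordinate split $u=2n\,p+i$, $v=2n\,q+j$.
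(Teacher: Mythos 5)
There is a genuine gap in the verification of the line sums, and it is exactly at the spot you flag as ``the only genuine thing to verify.'' Your scaling $d\mapsto 4n\,d$ and $d\mapsto n\,d$ does make the $D$-contribution well defined, because those maps are honest homomorphisms $Z_{n^{\alpha-1}}\to Z_{4n^{\alpha}}$ and $Z_{n^{\beta-1}}\to Z_{n^{\beta}}$. But the symmetric problem for the $B$-contribution is not addressed: the inclusion of representatives $\{0,\dots,4n-1\}\hookrightarrow Z_{4n^{\alpha}}$ (resp.\ $\{0,\dots,n-1\}\hookrightarrow Z_{n^{\beta}}$) is \emph{not} a homomorphism, so the sum $\sum_j b^{(1)}_{i,j}$ computed in $Z_{4n^{\alpha}}$ is only determined up to a multiple $4n\,t_i$ of $4n$ that depends on the row $i$, and likewise $\sum_j b^{(2)}_{i,j}=n\,u_i$ with $u_i$ depending on $i$. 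A row of $C$ contains row $i$ of $B$ repeated $K=n^{\lambda-1}$ times, so these ambiguities enter multiplied by $K$: the terms $4nK\,t_i=4n^{\lambda}t_i$ and $nK\,u_i=n^{\lambda}u_i$ survive unless $4n^{\lambda}\equiv 0 \pmod{4n^{\alpha}}$ and $n^{\lambda}\equiv 0\pmod{n^{\beta}}$, i.e.\ unless $\lambda\geq\alpha$ \emph{and} $\lambda\geq\beta$ — which, since $\lambda=(\alpha+\beta)/2$, forces $\alpha=\beta$. Concretely, take $\Gamma=Z_{12}\oplus Z_{27}$ ($n=3$, $\alpha=1$, $\beta=3$, $\lambda=2$, $K=3$) and the square $\ms{Z_{12}\oplus Z_{3}}(6)$ of Figure~\ref{fig:example Z_12 x Z_3}: the integer sums of the second coordinates of its first three rows are $0$, $6$, $12$, so after multiplying by $K=3$ the corresponding rows of $C$ have second-coordinate sums $0$, $18$, $9$ in $Z_{27}$ — not constant. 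So the construction fails whenever $\alpha\neq\beta$ (and the case $\alpha>\beta$ is not even reduced to $\alpha\leq\beta$ in your write-up, though that is the minor issue).

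The paper avoids this by never lifting a small square through a non-homomorphic set bijection. It inducts on $\beta$ in steps of two, placing the inductive square inside the honest subgroup $\Gamma_0=Z_{4n^{\alpha}}\oplus\langle n^2\rangle\leq\Gamma$ (so its line sums are computed in $\Gamma$ itself, with no representative ambiguity), and then adds integer residuals $r^s_{i,j}\in\{0,\dots,n^2-1\}$ chosen so that every row and every column of every residual square has the \emph{same integer sum} $mn^2$; only the diagonals vary, and those are repaired by pairing complementary blocks on the diagonals. That exact integer control of the added coset representatives is precisely what your Kronecker product lacks. Your argument could be salvaged only by additionally proving that a base square $\ms{Z_{4n}\oplus Z_{n}}(2n)$ exists whose integer representative sums are constant along all rows, columns, and both diagonals in each coordinate — a strictly stronger property than Lemma~\ref{lem:Z_4n + Z_n} provides and one its construction visibly fails to have.
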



\begin{proof}
	{Without loss of generality we can assume $\alpha\leq \beta$. The proof will be by induction on $\beta$. If  $\alpha=\beta$,   then there exists 
{$\ms{Z_{4n^{\alpha}}
	\oplus Z_{n^{\alpha}}}
	(2n^{\alpha})$} by Lemma~\ref{lem:Z_4n + Z_n}. From now assume that $\beta>\alpha$.} 
	If $\alpha$ is even, then $\beta$ is even and there exists a $\ms{Z_{4n^{\alpha}}}(2n^{\alpha/2})$ by {Theorem~\ref{thm:cyclic}}. The result then follows by Lemma~\ref{lem:SWL}.
	Assume now  that $\alpha\geq 1$ is odd, then $\beta\geq 3$ is odd.  Let $K=\left\langle  n^2\right\rangle$ be a subgroup of $Z_{n^{\beta}}$. 
	Define  $\Gamma_0=Z_{4n^{\alpha}}\oplus K$. Let $m=n^{(\alpha+\beta-2)/2}$, then by the inductive hypothesis there exists $\ms{\Gamma_0 }(2m)$ with a magic constant $\delta_0$, because 
	$\Gamma_0=Z_{4n^{\alpha}}\oplus K \cong Z_{4n^{\alpha}}\oplus Z_{n^{\beta-2}}$. Denote  by $y_{i,j}=(y_{i,j}^{(1)},y_{i,j}^{(2)})$  the element in the $i$-th row and $j$-th column of the $\Gamma_0$-magic square.

We will build $n^2$ different \emph{residual squares} $R^s$ of size $2m\times 2m$  with entries 
{$r^s_{i,j}\in \{0,1,\ldots,n^2-1\}$}
	for $1 \leq s \leq n^2$.  
	$$r^s_{i,j}=\left\{
\begin{array}{lcl}
s-1, & \text{if} & i,j \;\text{odd}, \\
n^2-s, & \text{if} & i \;\text{odd},\;\;j\;\;\text{even}\\
n^2-1-r_{i-1,j}, & \text{if} & i \;\text{even}, \\
\end{array}%
\right.
$$ where $1\leq i,j\leq 2m$. 
 Observe that all column, row sums are equal to $mn^2$, the diagonal sum is $2m(s-1)$, and backward diagonal is $2m(n^2-s)$.

We will construct now $n^2$ squares $X^1,\ldots,X^{n^2}$ of size $2m\times 2m$. We will glue those $n^2$ squares together to obtain an MS$_\Gamma(2mn)$
Denote by $x^s_{i,j}$ the entry in the $i$-th row and $j$-th column of the $s$-th square $X^s$.
Recall that $\Gamma=Z_{4n^{\alpha}}\oplus Z_{n^{\beta}}$, therefore any $g\in \Gamma$ we can identify with a pair $(a,b)$ such that $a\in Z_{4n^{\alpha}}$, $b\in Z_{n^{\beta}}$.  Let $x_{i,j}^s=(y_{i,j}^{(1)},y_{i,j}^{(2)}+r^s_{i,j})$, for $i,j=1,2,\ldots,2m$ and $s=1,2,\ldots,n^2$.

Suppose that $x_{i,j}^s= x_{i',j'}^{s'}$. Then $(y_{i,j}^{(1)},y_{i,j}^{(2)}+r^s_{i,j})=(y_{i',j'}^{(1)},y_{i',j'}^{(2)}+r^{s'}_{i',j'})$, which implies    $y_{i,j}^{(2)}+r^s_{i,j}=y_{i',j'}^{(2)}+r^{s'}_{i,j}$, but then $0\leq r^s_{i,j}-r^{s'}_{i',j'}=y_{i',j'}^{(2)}-y_{i,j}^{(2)}=k n^2$ for an integer $k\neq 0$. This is impossible because $r^s_{i,j},r^{s'}_{i',j'}\in\{0,1,\dots,n^2-1\}$. If $k=0$, then $i=i'$, $j=j'$ and thus $s=s'$, a contradiction.

Obviously all row and column sums are equal to a
constant $\delta=\delta_0+(0,mn^2)$. {Observe that, because $n$ is odd, there exists exactly one $s \in Z_{n^{\beta}}$ such that $2s\equiv n^2 \pmod{n^{\beta}}$. For  $X^s$ such that $2s\not\equiv n^2 \pmod{n^{\beta}}$} we set $s'= n^2 -s$ and call $X^{s'}$ the \emph{complementary square} of $X^s$. Note that $2ms+2ms'\equiv 2mn^2 \pmod {n^{\beta}}$. For $n$ odd we will glue now the rectangles $X^1,X^2,\ldots,X^{n^2}$ in such way that the square $X^{0}$ is in the center (i.e., it is the square which has elements on both the main and backward main diagonal) and  for any other square $X^s$ lying on a diagonal  there is also its complementary square on this diagonal.
\end{proof}



\subsection{Main result} \label{sec:main result}
 We summarize our previous results a single theorem.

\begin{theorem}\label{thm:main}
	{Let $\Gamma$ be an Abelian group of order $n^2>1$. There exists a $\Gamma$-magic square $\gms(n)$  if and only if $n>2$.}
\end{theorem}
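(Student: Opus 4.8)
The plan is to derive the theorem purely by assembling the results of Sections~\ref{sec:prelim} and~\ref{sec: const} through a case analysis on $n$ together with the structure of the Sylow $2$-subgroup of $\Gamma$. Necessity is immediate: since $|\Gamma|=n^2>1$ forces $n\geq2$, the only value to exclude is $n=2$, which is ruled out by Observation~\ref{obs:necessary}. For sufficiency I fix $n\geq3$ and an Abelian group $\Gamma$ of order $n^2$, write $n=2^a m$ with $m$ odd and $a\geq0$, and use the Fundamental Theorem to split $\Gamma=\Gamma_2\oplus\Gamma_{\mathrm{odd}}$, where $\Gamma_2$ is the $2$-part of order $2^{2a}$ and $\Gamma_{\mathrm{odd}}$ has odd order $m^2$.

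When $a=0$ (so $n$ is odd) the statement is exactly Theorem~\ref{thm:main-odd}, which I would invoke first. When $a\geq2$, the $2$-part $\Gamma_2$ has square order $2^{2a}$ with $a\geq2$, so Theorem~\ref{thm:order 2^s} yields an $\ms{\Gamma_2}(2^a)$; if $m=1$ we are done, and otherwise, because $2^a$ is even, Theorem~\ref{thm:Kotzig} supplies a $\Gamma_{\mathrm{odd}}$-Kotzig array of size $2^a\times m^2$, and gluing through Lemma~\ref{lem:SWL} (with $\Gamma_0=\Gamma_2$ and $H=\Gamma_{\mathrm{odd}}$) produces the desired $\gms(n)$ of side $2^a m=n$.

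The delicate case is $a=1$, i.e. $n=2m$ with $m\geq3$ odd, where $\Gamma_2$ has order $4$. If $\Gamma_2\cong Z_2\oplus Z_2$, then $\Gamma_2\in\gr$, so a $(Z_2\oplus Z_2)$-Kotzig array of size $m\times4$ exists by Theorem~\ref{thm:Kotzig} even though $m$ is odd; combining it via Lemma~\ref{lem:SWL} with the $\ms{\Gamma_{\mathrm{odd}}}(m)$ furnished by Theorem~\ref{thm:main-odd} gives side $2m=n$. If instead $\Gamma_2\cong Z_4$, then $\Gamma$ has a unique involution, $\Gamma\notin\gr$, and no $Z_4$-Kotzig array of odd height is available, so the direct constructions of Section~\ref{sec:n even} become essential. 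Here I would fix any odd prime $p\mid m$ and isolate a piece $A$ of the $p$-part so that $Z_4\oplus A$ is tractable: if the $p$-part is cyclic, take $A=Z_{p^{2c}}$ so that $Z_4\oplus A\cong Z_{4p^{2c}}$ is cyclic and Theorem~\ref{thm:cyclic} applies; otherwise pick two cyclic summands $Z_{p^{e_1}},Z_{p^{e_2}}$ of the $p$-part with $e_1+e_2$ even (such a pair always exists, since the total $p$-exponent of $m^2$ is even, hence the number of odd $e_i$ is even), set $A=Z_{p^{e_1}}\oplus Z_{p^{e_2}}$, and apply Lemma~\ref{lem:gl2}, or Lemma~\ref{lem:Z_4n + Z_n} when $e_1=e_2=1$, to $Z_4\oplus A\cong Z_{4p^{e_1}}\oplus Z_{p^{e_2}}$. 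In both subcases $|A|$ is an even power of $p$, so $\Gamma_0=Z_4\oplus A$ has perfect-square order with an even side, the complementary factor $B$ of all remaining summands again has perfect-square order, and Theorem~\ref{thm:Kotzig} provides a $B$-Kotzig array of matching height; a final use of Lemma~\ref{lem:SWL} then assembles the full $\gms(2m)$ (when $B$ is trivial, $\Gamma_0=\Gamma$ already and no gluing is needed).

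I expect the subcase $\Gamma_2\cong Z_4$ to be the main obstacle, both because it is the only place where the ad hoc constructions of Lemmas~\ref{lem:Z_4n + Z_n} and~\ref{lem:gl2} cannot be avoided, and because one must argue carefully that a prime factor of $m$ can always be peeled off in the form $Z_{p^{e_1}}\oplus Z_{p^{e_2}}$ with $e_1+e_2$ even while leaving a complement of perfect-square order. This parity bookkeeping, rather than any single hard estimate, is the crux that makes the reduction to Lemma~\ref{lem:SWL} succeed.
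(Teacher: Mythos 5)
Your proposal is correct and follows essentially the same route as the paper's own proof: the same necessity argument via Observation~\ref{obs:necessary}, the same case split on the power of $2$ dividing $n$, and the same assembly from Theorem~\ref{thm:main-odd}, Theorem~\ref{thm:order 2^s}, Theorem~\ref{thm:Kotzig}, Theorem~\ref{thm:cyclic}, and Lemmas~\ref{lem:SWL} and~\ref{lem:gl2}. The only cosmetic differences are that you merge the paper's cases $n=2^{\beta}$ and $n=2^{\beta}k$ ($\beta\geq2$) into one, and in the $Z_4\oplus K$ subcase you fix a prime $p$ and branch on whether its Sylow part is cyclic, whereas the paper branches on whether all of $K$ is cyclic; both variants reduce to the same lemmas.
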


\begin{proof}
	{
	The necessary condition follows from Observation~\ref{obs:necessary}.
	} 
	For clarity, we split the proof of sufficiency into several cases.
	
\vskip6pt\noindent
\textit{\underline{Case 1} \ $n$ is odd}	
\begin{adjustwidth}{40pt}{0pt}
	When $n$ is odd, the proof follows directly from Theorem~\ref{thm:main-odd}. 
\end{adjustwidth}

\vskip6pt\noindent
\textit{\underline{Case 2} \ $n=2^{2\beta}$}	
\begin{adjustwidth}{40pt}{0pt}
	When $n=2^{{2}\beta}$, the result follows from Theorem~\ref{thm:order 2^s}. 
\end{adjustwidth}

\vskip6pt\noindent
\textit{\underline{Case 3} \ $n=4k^2, \ k> 1$}	
\begin{adjustwidth}{40pt}{0pt}
	When  $n=4k^2$ where $k$ is odd and $k>1$, then either $\Gamma=Z_2\oplus Z_2\oplus K$ or $\Gamma=Z_4\oplus K$ with $|K|=k^2$ odd.

    If $\Gamma=Z_2\oplus Z_2\oplus K$, then there exists $\ms{K}(k)$ by {Case 1} and since there exists a $Z_2\oplus Z_2$-Kotzig array of size $k\times 4$ by  Theorem~\ref{thm:Kotzig},  we can apply Lemma~\ref{lem:SWL}.
      
	If $\Gamma=Z_4\oplus K$ and $K$ is cyclic, then $\Gamma$ is cyclic as well and the result follows from Theorem~\ref{thm:cyclic}.

	If $K$ is not cyclic, then there exists an odd prime $p$ such that 
	$K=Z_{p^\gamma}\oplus Z_{p^\delta}\oplus H$ where $\gamma+\delta$ is even and we can write
	$\Gamma=Z_{4p^\gamma}\oplus Z_{p^\delta}\oplus H$. Then there exists a $Z_{4p^\gamma}\oplus Z_{p^\delta}$-magic square of side $(\gamma+\delta)/2$ by Lemma~\ref{lem:gl2}. When $|H|=1$, then $\Gamma=Z_{4p^\gamma}\oplus Z_{p^\delta}$ and we are done. When $|H|>1$, then we apply Lemma~\ref{lem:SWL}.
\end{adjustwidth}

\vskip6pt\noindent
\textit{\underline{Case 4} \ $n=2^{2\beta}k^2, \ k> 1, \beta>1$}	
\begin{adjustwidth}{40pt}{0pt}
	Finally, when $\beta\geq2$, then $\Gamma=K\oplus H$ where $|K|=2^{2\beta}$. Then there is an $\ms{K}(2^\beta)$ by Theorem~\ref{thm:order 2^s}  and an $H$-Kotzig array of size $2^\beta\times |H|$ by Theorem~\ref{thm:Kotzig}. Hence we apply again Lemma~\ref{lem:SWL}.	
\end{adjustwidth}
\vskip-14pt
\end{proof}

\newpage
\section{Conclusion} \label{sec:conclusion}

We have shown that a $\Gamma$-magic square of side $n$ over a finite Abelian group $\Gamma$ exists for every $n>2$ and every $\Gamma$ of order $n^2$. 

In our paper, we construct a magic square $\gms(n)$, where $\Gamma$  is an Abelian group of order $n^2$. A natural question arises if there exists a magic {square} $\gms(n)$ when $\Gamma$ is an Abelian group of order $k>n^2$.

Formally speaking, let $(\Gamma,+)$ be an Abelian group of order $k>n^2$, $S\subset \Gamma$ such that $|S|=n^2$  and MS$_{S}(n)$ be an {$n\times n$} array whose entries are all elements of $S$. Then MS$_{S}(n)$ is an $S$-magic {square}  if all row, column, main and backward main diagonal sums are equal to the same element $\mu\in\Gamma$.

We propose the following open problem:

\begin{oprb}Given an integer $n$ and an Abelian group $\Gamma$ of order  $k>n^2$, determine whether there exists MS$_{S}(n)$ for some $S\subset \Gamma$ {such that $S$ is not a subgroup of $\Gamma$}.
\end{oprb}

	Since for every $n>2$ and a group $Z_{n^2+1}$ with 	$S=Z_{n^2+1}\setminus\{0\}$ there exists MS$_{S}(n)$ by  Theorem~\ref{thm:cyclic}, we can state a more specific version of this problem:
\begin{oprb}Given an integer $n$ and an Abelian group $\Gamma$ of order  $n^2+1$, determine whether there exists MS$_{S}(n)$ for $S= \Gamma\setminus \{0\}$.
\end{oprb}

While the order in which edge labels are considered is irrelevant for Abelian groups, in non-Abelian groups, different orders may yield different weights. Therefore, one could  also inquire about the existence of magic squares  in non-Abelian groups.

\section{Statements and Declarations}
The work of the first author was  supported by program ''Excellence initiative – research university'' for the AGH University.

\vskip1cm

\noindent


\newpage

\end{document}